\theoremstyle{plain}
\newtheorem*{theorem*}{Theorem}
\newtheorem*{lemma*} {Lemma}
\newtheorem*{corollary*} {Corollary}
\newtheorem*{proposition*} {Proposition}
\newtheorem*{conjecture*}{Conjecture}
\newtheorem{theorem}{Theorem}[section]
\newtheorem{lemma}[theorem]{Lemma}
\newtheorem{corollary}[theorem]{Corollary}
\newtheorem{proposition}[theorem]{Proposition}
\newtheorem{question}[theorem]{Question}
\theoremstyle{definition}
\theoremstyle{remark}
\newtheorem*{remark}{Remark}
\newtheorem*{definition}{Definition}
\newtheorem*{claim}{Claim}
\theoremstyle{definition}
\def\Q{\Bbb{Q}}
\def\Z{\Bbb{Z}}
\def\R{\Bbb{R}}
\def\C{\Bbb{C}}
\def\N{\Bbb{N}}
\def\be{\begin{equation}} \def\ee{\end{equation}}
\def\id{\operatorname{id}}
\def\eps{\epsilon}
\def\sign{\operatorname{sign}}
\def\null{\operatorname{null}}
\def\l{\lambda}
\def\sm{\setminus}
\def\bp{\begin{pmatrix}}
\def\ep{\end{pmatrix}}
\def\bn{\begin{enumerate}}
\def\en{\end{enumerate}}
\def\ba{\begin{array}}
\def\ea{\end{array}}
\def\L{\Lambda}
\def\tpm{[t^{\pm 1}]}
\def\s{\sigma}
\def\fr12{\frac{1}{2}}
\def\diag{\operatorname{diag}}
\def\ol{\overline}
\def\Im{\operatorname{Im}}
\def\S{\Sigma}
\def\vare{\varepsilon}
\def\PP{\mathcal{P}}
\def\co{\colon}
\def\gammamax{\gamma_{max}}
\def\gammamin{\gamma_{min}}
\def\rt{R[t^{\pm 1}]}
\def\realt{\R[t^{\pm 1}]}
\def\ct{\C[t^{\pm 1}]}
\def\zt{\Z[t^{\pm 1}]}
\def\qt{\Q[t^{\pm 1}]}
\def\hom{\operatorname{Hom}}
\def\wti{\widetilde}
\def\nr{n_{\mathbb{R}}}
\def\to{\mathchoice{\longrightarrow}{\rightarrow}{\rightarrow}{\rightarrow}}
\newcommand{\shortxra}[2][]{\ext@arrow 0359\rightarrowfill@{#1}{#2}}
\def\longrightarrowfill@{\arrowfill@\relbar\relbar\longrightarrow}
\newcommand{\longxra}[2][]{\ext@arrow 0359\longrightarrowfill@{#1}{#2}}
\begin{document}

\title{The unknotting number and classical invariants II}

\date{03 July 2012}

\author{Maciej Borodzik}
\address{Institute of Mathematics, University of Warsaw, Warsaw, Poland}
\email{mcboro@mimuw.edu.pl}

\author{Stefan Friedl}
\address{Mathematisches Institut\\ Universit\"at zu K\"oln\\   Germany}
\email{sfriedl@gmail.com}

\thanks{The first author is supported by Polish MNiSzW Grant No N N201 397937 and also by the Foundation for Polish Science FNP}

\def\subjclassname{\textup{2000} Mathematics Subject Classification}
\expandafter\let\csname subjclassname@1991\endcsname=\subjclassname
\expandafter\let\csname subjclassname@2000\endcsname=\subjclassname

\subjclass{Primary 57M27}
\keywords{Levine--Tristram signatures, Blanchfield form}

\begin{abstract}
In \cite{BF12} the authors associated to a knot $K\subset S^3$ an invariant $n_\R(K)$ which is defined using the Blanchfield form and which gives a lower bound on the unknotting number.
In this paper we express  $n_\R(K)$ in terms of  Levine--Tristram signatures and nullities of $K$.
In the proof we also show that the Blanchfield form for any knot $K$ is diagonalizable over $\realt$.
\end{abstract}

\maketitle

\section{Introduction}

Let $K\subset S^3$ be a knot. Throughout this paper we assume that all knots are oriented. 
We denote by $X(K)=S^3\sm\nu K$ its  exterior. The
Blanchfield form (see \cite{Bl57} and also Section \ref{section:blanchfield} below) is the linking form on the Alexander module $H_1(X(K),\zt)$, i.e. a non-singular hermitian form
\[ \l(K)\colon H_1(X(K),\zt)\times H_1(X(K),\zt)\to \Q(t)/\zt.\]
In \cite{BF12} we denoted by  $n(K)$  the minimal size of a hermitian matrix $A(t)$ over $\zt$,
which represents the Blanchfield form and such that $A(1)$ is diagonalizable
over $\Z$. We then showed that $n(K)$ is a lower bound on the unknotting number $u(K)$.
Unfortunately $n(K)$ is  in general hard to compute. The weaker invariant
$n_\R(K)$ is the minimal size of a square matrix over $\realt$, which represents the Blanchfield form over $H_1(X(K);\realt)$.
In this paper we will show that $n_\R(K)$ is determined by the Levine-Tristram signatures and the nullities of $K$.

Before we state the main result of the paper, let us recall
that for a knot $K$ with Seifert matrix $V$ and $z\in S^1$ the Levine-Tristram signature \cite{Le69,Tr69} is defined as
\[\s_K(z)=\sign(V(1-z)+V^t(1-z^{-1})).\]
Furthermore, for $z\in \C\sm \{0,1\}$ the nullity is defined as
\[ \eta_K(z)=\null(V(1-z)+V^t(1-z^{-1})),\]
and we extend this definition to $z=1$ by setting $\eta_K(1)=0$. It is well-known that these definitions do not depend on the choice of a Seifert matrix.
These invariants now give rise to the following two invariants which will play a prominent role in this paper:
\[
\ba{rl}
\mu(K)&:=\frac{1}{2}\left(\max\{ \eta_K(z)+\s_K(z) \, |\, z\in S^1\}+\max\{ \eta_K(z)-\s_K(z) \, |\, z\in S^1\}\right)\\
\eta(K)&:=\max\{ \eta_K(z) \, |\, z\in\C\sm \{0\}\}.
\ea
\]
It is  straightforward, see Lemma \ref{lem:easy}, to show that $\mu(K)$ and $\eta(K)$ are lower bounds on $n_\R(K)$.
Our main theorem is now the following result, first announced in \cite{BF12}, which says that $n_\R(K)$ is in fact determined by $\mu(K)$ and $\eta(K)$.

\begin{theorem}\label{thm3}\label{mainthm}
For any knot $K$ we have
\[ n_\R(K)=\max\{\mu(K),\eta(K)\}.\]
\end{theorem}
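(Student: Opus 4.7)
The plan is to establish $\nr(K) \leq \max\{\mu(K),\eta(K)\}$, since the reverse inequality is the content of Lemma~\ref{lem:easy}. Thus the goal is to exhibit a hermitian matrix over $\realt$ of size $N := \max\{\mu(K),\eta(K)\}$ representing the Blanchfield form of $K$.

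The first step is to invoke the diagonalization result announced in the abstract: the Blanchfield form of any knot is diagonalizable over $\realt$. This reduces the problem to the combinatorial question of re-grouping $1 \times 1$ hermitian blocks.

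Next I would decompose along irreducibles. Each diagonal entry is, up to a unit of $\realt$, a power of a symmetric irreducible polynomial $p$. For each such $p$ let $r_p$ be the number of cyclic summands in the $p$-primary part of $H_1(X(K);\realt)$. A direct computation shows $r_p = \eta_K(z)$ at any root $z$ of $p$, so in particular $r_p \leq \eta(K)$; and since the $p$-primary submodule requires $r_p$ generators, $r_p$ is also a lower bound on the matrix size.

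The next step is to merge summands at coprime primes. Two cyclic summands $\realt/(p^a)$ and $\realt/(q^b)$ with $\gcd(p,q)=1$ combine via the Chinese remainder theorem into a single cyclic summand, and often into a single $1\times 1$ hermitian block over $\realt$. The obstruction to combining arises from the primes $p$ with roots on $S^1$: the Blanchfield form on $\realt/(p^a)$ for such $p$ carries an additional sign $\varepsilon \in \{\pm 1\}$ (governing the jump of $\s_K$ at the roots of $p$), and the merged form admits a single hermitian entry in $\realt$ only when these signs are compatible across the primes being merged.

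The combinatorial heart of the argument, and what I expect to be the main obstacle, is to show that an optimal re-grouping attains size exactly $\max\{\mu, \eta\}$. The $\eta$ bound reflects the per-prime constraint just described; the $\mu$ bound reflects the sign-compatibility constraint across $S^1$-primes, and encodes the aggregate of the pointwise size bound $\eta_K(z) + |\s_K(z)| \leq n$ along the full circle. A clean strategy is to treat $\s_K$ as a step function on $S^1$ and to build the diagonal blocks so that on each arc the prescribed positive and negative eigenvalue counts fit within $N$ slots; the very definitions of $\mu$ and $\eta$ are tailored to make this local-to-global assembly succeed. The technical difficulty concentrates in the regime $\mu > \eta$, where blocks that absorb signature jumps must be shared across several $S^1$-primes, and producing an explicit (or inductive) merging procedure that never exceeds the $\mu$ budget is where the real work lies.
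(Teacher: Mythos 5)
Your outline matches the paper's proof in both structure and order of ideas: start from Lemma~\ref{lem:easy}, diagonalize the Blanchfield form over $\realt$, break the diagonal into pieces indexed by primes, and then merge coprime pieces subject to a sign-compatibility condition. That is exactly the strategy of Sections~\ref{section:diag}--\ref{section:conclusion}. But your proposal stops at the two hardest points and explicitly defers them, so as written it is an outline rather than a proof.

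The first gap is the merging lemma itself. You invoke the Chinese Remainder Theorem to combine $\realt/(p^a)$ and $\realt/(q^b)$ into a single cyclic summand, and then assert that the resulting form is represented by a $1\times 1$ hermitian matrix over $\realt$ precisely when signs are compatible. The module-theoretic merge is automatic, but realizing the merged form by a hermitian $1\times 1$ matrix requires producing palindromic $P,Q$ with $PA+QB=\vare$ and $P,Q$ \emph{positive} on $S^1$ (Proposition~\ref{prop:step3}), plus a factorization $P=U\ol{U}$, $Q=V\ol{V}$ (Lemma~\ref{l:step2}). This positive B\'ezout identity is the real content of the Decomposition Theorem~\ref{th:glue}; it is an approximation/Stone--Weierstrass argument in the paper and is not a consequence of the CRT. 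A small related inaccuracy: after diagonalization the entries are powers of elementary palindromic polynomials $B_\xi$, and $B_\xi$ for $|\xi|<1$, $\xi\notin\R$ is palindromic of degree four but \emph{not} irreducible over $\R$; the correct normal form (``elementary diagonal'', Lemma~\ref{lem:splitting}) is phrased in terms of the $B_\xi$, not irreducibles.

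The second gap is the combinatorial re-grouping that attains exactly $\max\{\mu,\eta\}$. You correctly identify this as the hard part and describe the intuition (cover the circle by arcs, fit signed eigenvalues into $N$ slots, worry about the $\mu>\eta$ regime), but you do not supply the argument. In the paper this is Propositions~\ref{prop:oncircle} and \ref{prop:notoncircle}, each by induction on the number of elementary blocks, followed by a final merge in Theorem~\ref{th:realequal}; the key inductive step in Proposition~\ref{prop:oncircle} is to order the roots $\xi_1\le\cdots\le\xi_M$ on $S^1_+$, try to absorb $e_M$ into an already-built block $\rho_a$ of matching sign at $\xi_M$, and then verify via the step functions $\Theta^{\pm}_E$ that $\mu(E)=\mu(E')$ (Case~1) or $\mu(E)=\mu(E')+1$ (Case~2). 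That bookkeeping is exactly the ``explicit or inductive merging procedure'' you flag as missing, and without it the upper bound $\nr\le\max\{\mu,\eta\}$ is not established.
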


\begin{remark}
\bn
\item
Since $V(1-z)+V^t(1-z^{-1})=(Vz-V^t)(z^{-1}-1)$ and $\Delta_K(z)=\det(Vz-V^t)$
it follows that  $\eta(K)$ is determined by the values of $\eta_K$ at the set of zeros of $\Delta_K(t)$.
Similarly we will  show (see Proposition \ref{prop:maxoncircle}) that $\mu(K)$ is  determined by the values of $\s_K$ and $\eta_K$
at the zeros of $\Delta_K(t)$ on the unit circle.
\item We denote by  $W(\Q(t))$  the Witt group of hermitian non-singular forms $\Q(t)^r\times \Q(t)^r\to \Q(t)$.
 Livingston  \cite{Li11} introduced the knot invariant
\[ \rho(K):=\ba{c}\mbox{minimal size of a hermitian matrix $A(t)$}\\
\mbox{representing $(1-t)V_K+(1-t^{-1})V_K^t$ in $W(\Q(t))$}\ea \]
and showed that it is a lower bound on the 4-genus. Furthermore Livingston showed that $\rho(K)$ can be determined using the Levine-Tristram signature function. This result is related in spirit to our result that $n_\R(K)$ is a lower bound on the unknotting number and that $n_\R(K)$ can be determined using Levine-Tristram signatures and nullities.
\en
\end{remark}

There are two main ingredients in the proof of Theorem \ref{mainthm}. The first one is that the Blanchfield form over $\R$ can be represented
by a diagonal matrix, see Section~\ref{section:diag}. The other one is the Decomposition Theorem
in Section~\ref{section:decomp} which is used twice in the proof of Theorem \ref{thm3}. More precisely, we first  show that the Blanchfield
form can be represented by an elementary diagonal matrix $E$ in Section~\ref{section:basicdiagonal}. We then use the Decomposition Theorem
to carefully rearrange terms on the diagonal of $E$ so as to decrease its size to exactly $n_\R$. This is done in Section~\ref{section:conclusion}.

To conclude the introduction we point out  that passing from a matrix $A(t)$ representing the Blanchfield form to an elementary diagonal matrix $E(t)$
(see Section~\ref{section:basicdiagonal}) is closely related to the classification of isometric structures over $\R$
done in \cite{Mi69} (see also \cite{Neu82,Ne95}). 
 For example, there is a one-to-one correspondence between indecomposable parts
of isometric structures over $\R$ and polynomials occurring on the diagonal of $E(t)$. We refer to \cite{BN13} for other applications
of this classification in  knot theory.


\subsection*{Acknowledgment.}
We wish to think Andrew Ranicki and Alexander Stoimenow for helpful conversations.
We are especially grateful to the referee for very carefully reading an earlier version of this paper.
The authors also would like to thank the Renyi Institute of Mathematics
for hospitality.

\section{Blanchfield forms}\label{sec:gbf}

In this section we review some definitions and results from our earlier paper \cite{BF12}.

\subsection{Blanchfield forms}\label{section:tbp}\label{section:blanchfield}

Let $R\subset \R$ be a subring. We denote by
\[ p(t)\mapsto \ol{p(t)}:=p(t^{-1})\]
 the usual involution on $\rt$.
Throughout the paper we will denote the quotient field of $\rt$ by $Q_R(t)$.
The involution on $\rt$ extends in a canonical way to an involution on $Q_R(t)$.
We will henceforth always view $\rt$ and $Q_R(t)$ as rings with involution.
Given an $\rt$-module $M$ we will denote by $\ol{M}$ the module with the `involuted' $\rt$-module structure,
i.e. given $p(t)\in\rt$ and $m$ in the abelian group $\ol{M}=M$ we define the $\rt$-module structure in $\ol{M}$ by $p(t)\cdot m:=p(t^{-1})\cdot m$,
where the multiplication on the right hand side is given by the multiplication in the $\rt$-module $M$.
\medskip

A \emph{Blanchfield form} over $\rt$ is a hermitian non-singular  form
\[\lambda\colon H\times H\to Q_R(t)/\rt,\]
where $H$ is a finitely generated torsion $\rt$-module.
Recall, that a form is called \emph{hermitian}
if
\[ \l(a,p_1b_1+p_2b_2)=\l(a,b_1)p_1+\l(a,b_2)p_2 \mbox{ for any }a,b_1,b_2\in H\mbox{ and any }p_1,p_2\in \rt,\]
and if
\[ \l(a_1,a_2)=\ol{\l(a_2,a_1)}\mbox{ for any }a_1,a_2\in H.\]
Also, a form is called  \emph{non-singular} if the map
\[ \ba{rcl} H&\to& \ol{\hom(H,Q_R(t)/\rt)} \\
a&\mapsto & \l(a,b) \ea \]
is an isomorphism.

\subsection{Blanchfield forms and hermitian matrices}\label{section:class}
Let $R\subset \R$ be a subring.
 Given a hermitian $n\times n$ matrix $A$ over $\rt$ with $\det(A)\ne 0$ we denote by
$\lambda(A)$ the form
\[ \ba{rcl} \rt^n/A\rt^n \times \rt^n/A\rt^n&\to & Q_R(t)/\rt \\
(a,b) &\mapsto & \ol{a}^t A^{-1} b,\ea \]
where we view $a,b$ as represented by column vectors in $\rt^n$. Note that $\lambda(A)$ is a Blanchfield form, i.e. it is a hermitian, non-singular
form. The following result due to Ranicki \cite{Ra81} says that in fact all Blanchfield forms are isomorphic to some $\lambda(A)$.

\begin{proposition}
Let $R\subset \R$ be a subring. Given a Blanchfield form $\l$ over $\rt$ there exists a hermitian matrix $A$ with $\det(A)\ne 0$ such that 
$\l\cong \l(A)$. 
\end{proposition}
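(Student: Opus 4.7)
The plan is to obtain $A$ by first producing a square presentation for $H$ over $\rt$ and then using the Blanchfield form, together with an Ext--duality calculation, to promote it to a hermitian one. To this end, I would first establish that $H$ admits a presentation
\[ 0 \to \rt^n \xrightarrow{B} \rt^n \to H \to 0 \]
with $\det(B) \neq 0$. Since $\rt$ is Noetherian and $H$ is a finitely generated torsion module, $H$ has a finite free resolution; the non-singularity of $\lambda$ (via the identification with $\Ext^1$ below) forces this resolution to have length at most one. Localizing at $Q_R(t)$ forces the two free modules in the resolution to have equal rank, after stabilizing by identity summands if necessary, and injectivity of $B$ then forces $\det(B) \neq 0$.

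Next, I would produce a second presentation of $H$ via the Blanchfield adjoint. Applying $\Hom(-,\rt)$ to the resolution and using that $H$ is torsion (so $\Hom(H,\rt)=0$) yields
\[ 0 \to \rt^n \xrightarrow{\ol{B}^t} \rt^n \to \Ext^1(H,\rt) \to 0. \]
The short exact sequence $0\to \rt \to Q_R(t) \to Q_R(t)/\rt\to 0$, combined with $\Hom(H, Q_R(t))=0$, gives the natural identification $\Ext^1(H,\rt)\cong\Hom(H, Q_R(t)/\rt)$. The non-singularity of $\lambda$ then produces an isomorphism $H\cong\ol{\Hom(H, Q_R(t)/\rt)}\cong\ol{\Ext^1(H,\rt)}$, yielding a second square presentation of $H$ whose matrix is the conjugate-transpose of $B$.

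Finally, I would merge these two presentations into a single hermitian one. Lifting the identity of $H$ between the two presentations yields matrices $C, D$ over $\rt$ satisfying $BC = \ol{B}^t D$, and the hermitian condition on $\lambda$ translates into a chain-homotopy symmetry between $C$ and $\ol{D}^t$. By stabilization with hyperbolic blocks $\bp 0 & 1 \\ 1 & 0 \ep$ (a hermitian analog of Schanuel's lemma), one can arrange $C$ itself to be hermitian and invertible over $Q_R(t)$, whereupon the hermitian matrix $A$ assembled from $B$ and $C$ satisfies $\lambda(A)\cong\lambda$ by a direct computation of the pairing. The main obstacle is precisely this symmetrization step: having two square presentations of $H$ related by an abstract isomorphism does not by itself yield a hermitian presentation matrix, and extracting one requires a hermitian Schanuel-type argument together with a Witt-style cancellation to control the added hyperbolic summands. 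This symmetrization is the essential content of Ranicki's general theorem on the equivalence between linking forms on modules of projective dimension one and hermitian presentations.
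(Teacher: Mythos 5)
The paper's proof is a one-line citation of \cite[Propositions~1.7.1 and~3.4.3]{Ra81}, and your proposal essentially unpacks what those propositions do, so the approach is the same. You correctly identify the main moving parts: a square presentation of $H$ with nonzero determinant, the $\Ext$-duality that converts non-singularity of $\l$ into an isomorphism $H\cong\ol{\Ext^1(H,\rt)}$ and hence a second square presentation by $\ol{B}^t$, and the hermitian Schanuel/Witt cancellation needed to merge the two into a single hermitian presentation matrix. You also rightly flag that last symmetrization step as the genuine content of Ranicki's theorem rather than claiming to carry it out. One caution: the assertion that non-singularity of $\l$ forces the free resolution of $H$ to have length one is true but substantive --- for $\rt=\zt$ the global dimension is two, so it is not automatic --- and your justification as written is circular, since the ``identification with $\Ext^1$ below'' is precisely what you are in the process of setting up. As a sketch mirroring the paper's citation, your level of detail is fine, but that step would need its own argument (or its own reference to Levine or Ranicki) if you were to fill the proof in.
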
 

\begin{proof}
The proposition is an immediate consequence of \cite[Proposition~1.7.1]{Ra81} together with \cite[Proposition~3.4.3]{Ra81}.
\end{proof}

We will often appeal to the following result which also follows from work of  Ranicki's \cite{Ra81}:

\begin{proposition} \label{prop:ra81}
Let $R\subset \R$ be a subring and let $A$ and $B$ be hermitian matrices over $\rt$ such that  $\det(A)\ne 0$ and $\det(B)\ne 0$.
Then $\lambda(A)$ and $\lambda(B)$ are isometric forms  if and only if $A$ and $B$ are related by a sequence of  the following three moves:
\bn
\item replace $C$ by $PC\ol{P}^t$ where $P$ is a matrix over $\rt$ with $\det(P)$ a unit in $\rt$,
\item replace $C$ by the block sum $C\oplus D$ where $D$ is a hermitian matrix over $\rt$ such that  $\det(D)$ is a unit in $\rt$,
\item the inverse of $(2)$.
\en
\end{proposition}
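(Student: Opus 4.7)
The plan is to prove the two implications separately. The \emph{if} direction is a direct verification that each of the three moves preserves the isometry class of the Blanchfield form. For move~(1), if $B=PA\ol{P}^t$ with $\det(P)$ a unit in $\rt$, then $B^{-1}=\ol{P}^{-t}A^{-1}P^{-1}$ and the $\rt$-module isomorphism $\rt^n/A\rt^n\to \rt^n/B\rt^n$, $x\mapsto Px$ satisfies
\[ \ol{Px}^tB^{-1}(Py)=\ol{x}^t\ol{P}^t\ol{P}^{-t}A^{-1}P^{-1}Py=\ol{x}^tA^{-1}y, \]
so it induces an isometry $\lambda(A)\cong \lambda(B)$. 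For move~(2), if $\det(D)$ is a unit in $\rt$ then $\rt^m/D\rt^m=0$, so $\lambda(D)$ is the zero form on the zero module and $\lambda(A\oplus D)\cong \lambda(A)$ canonically. Move~(3) is just move~(2) read backwards.

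For the \emph{only if} direction I would appeal directly to Ranicki \cite{Ra81}, as the paper suggests, but the underlying strategy is as follows. Given an isometry $f\colon (H_A,\lambda(A))\to (H_B,\lambda(B))$, projectivity of $\rt^n$ allows one to lift $f$ and $f^{-1}$ to $\rt$-linear maps between the presentations
\[ 0\to\rt^n\xrightarrow{A}\rt^n\to H_A\to 0,\qquad 0\to\rt^m\xrightarrow{B}\rt^m\to H_B\to 0. \]
After block-summing $A$ and $B$ with suitable matrices of unit determinant (moves~(2) and~(3)) one can arrange that $n=m$ and that the lift $P\in M_n(\rt)$ is invertible. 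The isometry identity $\ol{Px}^tB^{-1}(Py)\equiv \ol{x}^tA^{-1}y\pmod{\rt}$ then forces $\ol{P}^tB^{-1}P=A^{-1}$, and hence $B=PA\ol{P}^t$, which is exactly move~(1). The formal version of this outline is contained in \cite[Proposition~1.7.1]{Ra81} together with \cite[Proposition~3.4.3]{Ra81}, which I would cite.

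The main obstacle is the stabilisation step: an arbitrary lift of the underlying module isomorphism need not be invertible even when $n=m$, and one must enlarge both presentations by unit-determinant summands to make it so. This is a standard algebraic $K_1$-type manoeuvre, and rather than reproving it I would simply invoke the two Ranicki propositions above, which is also the approach implicitly taken in this paper.
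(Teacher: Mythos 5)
Your proposal is correct and takes essentially the same route as the paper: the ``if'' direction is a direct elementary verification (which you spell out in more detail than the paper, whose proof simply calls it elementary), and the ``only if'' direction is deferred to Ranicki's Propositions~1.7.1 and~3.4.3, exactly as in the paper. The paper's version of the ``only if'' step is slightly more precise about what Ranicki delivers --- namely an isometry of the nondegenerate forms $\Phi(A\oplus X)$ and $\Phi(B\oplus Y)$ over $\rt$ for suitable unit-determinant hermitian $X,Y$, from which the move sequence is read off --- but since your argument invokes the same two references, the two proofs are substantively identical.
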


\begin{proof}
The `if' direction of the proposition is elementary, whereas the `only if' direction is an immediate consequence of results in \cite{Ra81}.
Since the language in \cite{Ra81} is somewhat different we will quickly outline how the proposition follows from Ranicki's result.

We first note that any hermitian $n\times n$-matrix $C$ over $\rt$ with $\det(C)\ne 0$  defines a symmetric non-degenerate hermitian form 
\[ \ba{rcl} \Phi(C)\co \rt^n\times \rt^n&\mapsto & \rt \\
(v,w)&\mapsto& \ol{v}^t Cw.\ea \]
It is clear that if $C$ and $D$ are two matrices with non-zero determinants, then  $\Phi(C)$ and $\Phi(D)$ are isometric if and only if there exists a matrix $P$ over $\rt$ such that $\det(P)$ is a unit and such that $C=PD\ol{P}^t$. 

Now suppose that  $A$ and $B$ are two hermitian matrices over $\rt$ such that  $\det(A)\ne 0$ and $\det(B)\ne 0$
 and such that  $\lambda(A)\cong \lambda(B)$.
It then follows from  \cite[Proposition~1.7.1]{Ra81} together with \cite[Proposition~3.4.3]{Ra81}
that there exist hermitian matrices $X$ and $Y$ over $\rt$ such that $\det(X)$ and $\det(Y)$ are units and 
such that  $\Phi(A\oplus X)$ and $\Phi(B\oplus Y)$ are isometric.
The proposition follows from the above observation on isometric hermitian forms.

\end{proof}

\subsection{Definition of the Blanchfield Form dimension $n_R(\l)$.}

We are now ready to give the key definition of this paper.
Let $R\subset \R$ be a subring. Given a Blanchfield form $\l$ over $\rt$ we define
\[ n_R(\l):=\begin{tabular}{c}\mbox{minimal size of a hermitian matrix $A$ over $\rt$ with $\l(A)\cong \l$}\\
\mbox{and such that $A(1)$ is diagonalizable over $R$.}\end{tabular}\]
If no such matrix $A$ exists, then we write $n_R(\l):=\infty$.

Note that given a hermitan matrix $A(t)$ over $\rt$ the matrix $A(1)$ is symmetric.
Since any symmetric matrix over $\R$ is diagonalizable it now follows that 
for a Blanchfield form $\l$ over $\realt$ we have 
\[ n_{\R}(\l):=\mbox{minimal size of a hermitian matrix $A$ over $\realt$ with $\l(A)\cong \l$}.\]

\subsection{The Blanchfield forms of a knot in $S^3$}

Now let $K\subset S^3$ be a knot and let $R\subset \R$ be a subring.  We  consider the following sequence of maps:
\be\label{equ:defbl}
\ba{rcl}
\Phi\colon H_1(X(K);\rt)&\to& H_1(X(K),\partial X(K);\rt)\\[2mm]
&\to& \ol{H^2(X(K);\rt)}\xleftarrow{\cong} \ol{H^1(X(K);Q_R(t)/\rt)}\\[2mm]
&\to &\ol{\hom_{\rt}(H_1(X(K);\rt),Q_R(t)/\rt)}.\ea \ee
Here the first map is the inclusion induced map, the second map is Poincar\'e duality,
the third map comes from the long exact sequence in cohomology corresponding to the coefficients
$0\to \rt\to Q_R(t)\to Q_R(t)/\rt\to 0$, and the last map is the evaluation map. All these maps are isomorphisms and we thus obtain
 a non-singular form
\[ \ba{rcl} \lambda_R(K)\colon  H_1(X(K);\rt)\times H_1(X(K);\rt)&\to& Q_R(t)/\rt\\
(a,b)&\mapsto & \Phi(a)(b),\ea \]
called the \emph{Blanchfield form of $K$ with $\rt$-coefficients}.
It is well-known, see e.g. \cite{Bl57}, that this form is hermitian, i.e. it is in fact a Blanchfield form in the above sense.
If $R=\Z$ then we just write $\l(K)=\l_{\Z}(K)$ and refer to $\l(K)$ as the Blanchfield form of $K$.
\medskip

We now turn to the study of the Blanchfield Form dimension of $\l(K)$.  Given a  subring $R\subset \R$ we define
\[ n_R(K):=n_R(\l_R(K)).\]
Furthermore, if $\S$ is  a Seifert surface of genus $g$ for $K$ then we can  pick  a basis of $H_1(\S;\Z)$ such that the corresponding Seifert matrix $V$ has the property that 
$V-V^t=\bp 0 & \id_g\\ -\id_g & 0 \ep$. Then the hermitian matrix
\begin{multline}\label{equ:akt}
A(t)=\bp (1-t^{-1})^{-1}\id_g &0\\ 0&\id_g \ep V \bp \id_g&0\\0&(1-t)\id_g\ep +\\
+\bp \id_g &0 \\ 0&(1-t^{-1})\id_g \ep V^t\bp (1-t)^{-1}\id_g &0 \\ 0&\id_g
\ep\end{multline}
has the property that $\l_R(A(t))\cong \l_R(K)$. 
Note that $A(1)$ is not necessarily diagonalizable, but the diagonal sum $A(1)\oplus (1)$ or $A(1)\oplus (-1)$ is non-singular
and indefinite, hence diagonalizable. We thus see that $n_R(K)\leq 2g+1$. See \cite[Section~4]{Ko89} or \cite[Section 2.2]{BF12} for the details.

\begin{remark}
Following H. Murakami \cite{Mu90} we consider   the algebraic  unknotting number $u_a(K)$ of $K$,
i.e. the minimal number of crossing changes needed to turn $K$ into a knot with trivial Alexander polynomial.
We then have the following inequalities
\[ n_R(K)\leq n_{\Z}(K)=u_a(K)\leq u(K).\]
The first and the last inequality follow almost immediately from the definitions.
The fact that $n_\Z(K)\leq u_a(K)$ was proved in \cite{BF12}.
The converse inequality is shown in \cite{BF13}.
\end{remark}

\subsection{Statement of the main theorem}\label{section:statementmainthm}

 Given a hermitian matrix $A$ over $\rt$ and $z\in S^1$ we  define
\[ \s_A(z):= \sign(A(z))-\sign(A(1))\]
and given any $z\in \C\sm \{0\}$ we define
\[ \eta_A(z):=\null(A(z)).\]
Furthermore we  define
\[
\ba{rl}
\mu(A)&=\frac{1}{2}\left(\max\{ \eta_A(z)+\s_A(z) \, |\, z\in S^1\}+\max\{ \eta_A(z)-\s_A(z) \, |\, z\in S^1\}\right)\\
\eta(A)&=\max\{ \eta_A(z) \, |\, z\in\C\sm \{0\}\}.
\ea
\]
We can now formulate the following corollary to Proposition \ref{prop:ra81}.

\begin{corollary}\label{cor:ltdefined}
Let $R\subset \R$ be a subring and let $A$ and $B$ be hermitian matrices over $\rt$ such that  $\det(A(1))$ and $\det(B(1))$ are non-zero.
If $\lambda(A)$ and $\lambda(B)$ are isometric, then for any $z\in S^1$  we have
\[ \s_A(z)=\s_B(z)\]
and for any $z\in \C\sm \{0\}$ we have
\[ \eta_A(z)=\eta_B(z). \]
\end{corollary}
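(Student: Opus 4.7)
The plan is to apply Proposition \ref{prop:ra81} and reduce the problem to checking that each of the three moves listed there preserves the functions $z\mapsto \s_A(z)$ and $z\mapsto \eta_A(z)$. Note that the hypotheses $\det(A(1))\ne 0$ and $\det(B(1))\ne 0$ force $\det(A)\ne 0$ and $\det(B)\ne 0$, so Proposition \ref{prop:ra81} does apply and exhibits $B$ as obtained from $A$ by finitely many moves of type (1), (2), or (3); by induction on the length of such a sequence it suffices to verify the two invariances under a single move of each type.

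For a move of type (1), write $A'=PA\ol{P}^t$ with $\det(P)\in \rt^\times$. Evaluating at $z\in \C\sm\{0\}$ gives $A'(z)=P(z)A(z)P(z^{-1})^t$, and the unit hypothesis on $\det(P)$ ensures that $P(z)$ and $P(z^{-1})$ are invertible for every $z\in\C\sm\{0\}$. Hence $\rk(A'(z))=\rk(A(z))$, yielding $\eta_{A'}(z)=\eta_A(z)$. For $z\in S^1$ the identity $z^{-1}=\ol{z}$, together with the fact that the entries of $P$ are real Laurent polynomials, gives $P(z^{-1})^t=\ol{P(z)}^t$; thus $A'(z)$ is obtained from $A(z)$ by hermitian congruence with an invertible matrix, and Sylvester's law of inertia yields $\sign(A'(z))=\sign(A(z))$. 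Applying this both at $z$ and at $1$ gives $\s_{A'}(z)=\s_A(z)$.

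For a move of type (2), write $A'=A\oplus D$ with $\det(D)\in\rt^\times$. Then $D(z)$ is non-singular for every $z\in\C\sm\{0\}$, and additivity of nullity over block sums immediately gives $\eta_{A'}(z)=\eta_A(z)$. For $z\in S^1$, the eigenvalues of the hermitian matrix $D(z)$ depend continuously on $z$ and never vanish, so the integer-valued function $z\mapsto \sign(D(z))$ is locally constant on the connected circle $S^1$, hence constant. Therefore $\sign(D(z))=\sign(D(1))$, and consequently
\[ \s_{A'}(z)=\s_A(z)+\bigl(\sign(D(z))-\sign(D(1))\bigr)=\s_A(z).\]
A move of type (3) is the inverse of a move of type (2) and so enjoys the same invariance.

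The argument is essentially a formal unravelling of Proposition \ref{prop:ra81}. The one step that is not pure algebra, and that I expect to be the main point, is the constancy of $\sign(D(z))$ on $S^1$ in move (2): this is exactly where one uses that $D$ is non-singular on \emph{all} of $S^1$, which in turn relies on the strong hypothesis $\det(D)\in\rt^\times$ in Proposition \ref{prop:ra81} rather than merely $\det(D)\ne 0$.
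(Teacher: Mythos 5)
Your proof is correct and follows essentially the same route as the paper: reduce via Proposition \ref{prop:ra81} to checking invariance of $\s$ and $\eta$ under the three elementary moves, using Sylvester's law of inertia for move (1) and continuity of the signature of the non-degenerate block $D(z)$ on $S^1$ for move (2). You spell out a few steps the paper leaves implicit (notably that $P(z^{-1})^t=\ol{P(z)}^t$ on $S^1$, and that rank invariance under two-sided congruence handles the nullity for all $z\in\C\setminus\{0\}$), but the argument is the same.
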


\begin{proof}
The first claim concerning nullity is an immediate consequence of Proposition \ref{prop:ra81}.
We now turn to the proof of the claim regarding signatures.
First suppose that $B=PA\ol{P}^t$ where  $P$ is a matrix over $\rt$ such that  $\det(P)$ is a unit in $\rt$,
i.e. $\det(P)=rt^i$ for some $r\ne 0\in R$ and $i\in \Z$. Note that $\det(P(z))\ne 0$ for any $z$. We calculate
\[  \ba{rcl} \s_B(z)&=&\sign(B(z))-\sign(B(1))\\
&=& \sign(P(z)A(z)\ol{P(z)}^t)-\sign(P(1)A(1)P(1)^t)\\
&=&\sign(A(z))-\sign(A(1))\\
&=&\s_A(z).\ea \]
Now suppose that $B=A\oplus D$ where $D$ is a hermitian matrix over $\rt$ such that $\det(D)$ is a unit in $\rt$.
It is well-known that for any hermitian matrix $M$ over $\rt$ the map
\[ \ba{rcl} S^1&\to& \Z \\ z&\mapsto &\sign(M(z)) \ea \]
is continuous on $\{ z\in S^1 \, |\, \det(M(z))\ne 0\}$.
Since $\det(D(z))=\det(D)(z)\ne 0$ for any $z$ we see that $\sign(D(z))=\sign(D(1))$ for any $z$.
It now follows immediately that
\[ \s_A(z)=\sign(A(z))-\sign(A(1))=\sign(B(z))-\sign(B(1))=\s_B(z).\]
The corollary is now an immediate consequence of Proposition \ref{prop:ra81}.
\end{proof}

Let $\l\colon H\times H\to \R(t)/\realt$ be a Blanchfield form over $\realt$ such that multiplication by $t\pm 1$ is an isomorphism of $H$.
Let $B$ be a matrix over $\realt$ which represents $\l$. Note that the hypothesis on $H$ implies that $B(\pm 1)\ne 0$.
Given $z\in S^1$ we define $\s_\l(z)=\s_B(z)$ and given $z\in \C\sm\{0\}$ we define $\eta_\l(z)=\eta_B(z)$.
We furthermore define $\mu(\l)=\mu(B)$ and $\eta(\l)=\eta(B)$.
It follows from  Corollary \ref{cor:ltdefined} that these invariants do not depend on the choice of $B$.

We can now prove the easy direction of our main theorem.

\begin{lemma} \label{lem:easy}
Let $\l\colon H\times H\to \R(t)/\realt$ be a  Blanchfield form over $\realt$ such that multiplication by $t\pm 1$ is an isomorphism.
Then
\[ n_{\R}(\l)\geq \max\{\mu(\l),\eta(\l)\}.\]
\end{lemma}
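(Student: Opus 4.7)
The plan is to fix a hermitian matrix $A$ over $\realt$ with $\l(A)\cong\l$ whose size equals $n_\R(\l)$, and to derive the two inequalities $\eta(A)\leq n_\R(\l)$ and $\mu(A)\leq n_\R(\l)$ directly from eigenvalue counts of the hermitian matrices $A(z)$. The hypothesis that multiplication by $t\pm 1$ is an isomorphism of $H\cong \realt^n/A\realt^n$ is equivalent to $\det(A(\pm 1))\ne 0$ (since the order of $H$ equals $\det(A(t))$ up to a unit of $\realt$), so in particular $\det(A(1))\ne 0$ and Corollary~\ref{cor:ltdefined} lets us identify $\mu(A)=\mu(\l)$ and $\eta(A)=\eta(\l)$.

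The nullity bound is immediate: for any $z\in\C\setminus\{0\}$ the quantity $\eta_A(z)=\null(A(z))$ is bounded by the size of $A$, which is $n_\R(\l)$. Taking the maximum over $z$ gives $\eta(\l)\leq n_\R(\l)$.

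For the signature bound, write $n:=n_\R(\l)$ and, for $z\in S^1$, let $n_+(z)$, $n_-(z)$, $n_0(z)$ denote the number of positive, negative, and zero eigenvalues of the hermitian matrix $A(z)$, so that $n_+(z)+n_-(z)+n_0(z)=n$ and $\sign(A(z))=n_+(z)-n_-(z)$. Then I would compute
\[
\eta_A(z)+\s_A(z) \;=\; n_0(z)+n_+(z)-n_-(z)-\sign(A(1)) \;=\; 2\bigl(n_+(z)+n_0(z)\bigr)-n-\sign(A(1)),
\]
and use $n_+(z)+n_0(z)\leq n$ to obtain $\eta_A(z)+\s_A(z)\leq n-\sign(A(1))$. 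The symmetric argument with $n_-(z)+n_0(z)\leq n$ gives $\eta_A(z)-\s_A(z)\leq n+\sign(A(1))$. Since the two maxima in the definition of $\mu(A)$ are taken over independent choices of $z\in S^1$, adding the two bounds produces $2\mu(A)\leq 2n$, hence $\mu(\l)=\mu(A)\leq n$.

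There is no real obstacle here beyond careful bookkeeping of eigenvalue counts; the only subtlety worth flagging is checking that $A(\pm 1)$ really is invertible, which is what allows $\sign(A(1))$ to be a well-defined integer in $[-n,n]$ and lets Corollary~\ref{cor:ltdefined} make $\mu(\l)$ and $\eta(\l)$ intrinsic invariants of $\l$ independent of the chosen representing matrix $A$.
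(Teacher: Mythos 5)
Your proof is correct and takes essentially the same approach as the paper's: the paper also picks a representing matrix $B$ of minimal size $n$, observes that $\pm\sign(B(z))+\null(B(z))\in[-n,n]$ (which is exactly your eigenvalue count $2(n_\pm(z)+n_0(z))-n$), and adds the two resulting bounds $\eta_\l(z)+\s_\l(z)\le n-\sign(B(1))$ and $\eta_\l(z)-\s_\l(z)\le n+\sign(B(1))$ to cancel the $\sign(B(1))$ term. Your explicit check that $\det(A(\pm 1))\ne 0$ under the hypothesis on $H$ is a helpful bit of bookkeeping that the paper leaves implicit.
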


\begin{proof}
Let $B$ be a hermitian matrix over $\L$
of size $n:=n_\R(\l)$ which represents $\l$. Of course for any $z\in\C\sm\{0\}$ we have $\null(B(z))\le n_\R(\l)$, in particular
\[\eta(\l)\le n_\R(\l).\]

To show that $\mu(\l)\le n_\R(\l)$ let us assume that $\sign(B(1))=a\in[-n,n]$.
Note that for any  $z\in S^1$ we have  $\pm\sign(B(z))+\null(B(z))\in[-n,n]$. It thus  follows
that $\eta_\l(z)+\s_\l(z)\in[-n-a,n-a]$ and $\eta_\l(z)-\s_\l(z)\in[-n+a,n+a]$. We infer that $\mu_\l(z)\le\frac12((n-a)+(n+a))=n$.
\end{proof}

Our main theorem now says that under a slight extra assumption the inequality in Lemma \ref{lem:easy} is in fact an equality.
More precisely, the goal of this paper is to prove the following theorem.

\begin{theorem}\label{main}
Let $\l\colon H\times H\to \R(t)/\realt$ be a  Blanchfield form over $\realt$ such that
multiplication by $t\pm 1$ is an isomorphism of $H$.
Then
\[ n_{\R}(\l)= \max\{\mu(\l),\eta(\l)\}.\]
\end{theorem}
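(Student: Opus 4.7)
The lower bound $n_\R(\l)\ge\max\{\mu(\l),\eta(\l)\}$ is already provided by Lemma \ref{lem:easy}, so the task reduces to constructing a hermitian matrix $B$ over $\realt$ of size exactly $N:=\max\{\mu(\l),\eta(\l)\}$ with $\l(B)\cong \l$. The strategy, signaled in the introduction, is to proceed in two stages: first put $\l$ into an especially simple diagonal normal form, and then combine diagonal blocks pairwise using Proposition \ref{prop:ra81} to shrink the total size down to $N$.

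First I would invoke the diagonalization result for Blanchfield forms over $\realt$ (Section~\ref{section:diag}) to replace an arbitrary representing matrix by a diagonal hermitian matrix $D$. Then, appealing to the classification of indecomposable symmetric factors of elements of $\realt$ (mentioned in the introduction in connection with Milnor's classification of isometric structures over $\R$), I would refine $D$ further into an elementary diagonal matrix $E=\mathrm{diag}(e_1,\dots,e_m)$ in which every $e_i$ is annihilated by a power of a single irreducible symmetric polynomial. The irreducible symmetric polynomials in $\realt$ are of three types: $(t-z)(t-z^{-1})$ with $z\in S^1\sm\{\pm 1\}$, $(t-r)(t-r^{-1})$ with real $r\notin S^1\cup\{0\}$, and the quartic $(t-z)(t-\bar z)(t-z^{-1})(t-\bar z^{-1})$ for non-real $z\notin S^1$. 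The hypothesis that multiplication by $t\pm 1$ is an isomorphism ensures none of the $e_i$ are supported at $\pm 1$.

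Next I would show that each $e_i$ contributes in a controlled way to $\s_E(z)$ and $\eta_E(z)$: a basic block associated to a point $z_0\in S^1$ contributes $\pm 1$ to $\s_E$ and $1$ to $\eta_E$ at $z_0$ (and nothing at other points of $S^1$), while a block associated to a point off the unit circle contributes $0$ to $\s_E$ on $S^1$ and contributes $1$ to $\eta_E$ at a prescribed conjugate orbit. The essential move provided by the Decomposition Theorem (Proposition~\ref{prop:ra81}(3)) is that any pair of blocks whose block sum has determinant a unit of $\realt$ may be cancelled out of $E$ without changing $\l(E)$. I would use this repeatedly: blocks at points off $S^1$ can be paired with blocks of opposite "hyperbolic" type and absorbed, and blocks at points $z_0,z_1\in S^1$ with opposite signs can be combined into a single $2\times 2$ block whose polynomial $(t-z_0)(t-z_0^{-1})(t-z_1)(t-z_1^{-1})$ has no root on $S^1$ where the signature jumps — collapsing two $1\times 1$ summands to one $2\times 2$ block of size $2$, hence reducing $m$ by zero but in a way that allows further cancellations via move (3).

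The bookkeeping step, which I expect to be the main obstacle, is to show that this combinatorial rearrangement can always drive $m$ down to precisely $N=\max\{\mu(\l),\eta(\l)\}$. Concretely, one must split the diagonal entries into those arising from roots on $S^1$ and those arising from roots off $S^1$, match up the former using the two maxima defining $\mu(\l)$ (the maximum of $\eta_\l+\s_\l$ and the maximum of $\eta_\l-\s_\l$), and show that the off-circle entries never force the total size above $\eta(\l)$. Because off-circle contributions come in cancellable pairs and on-circle contributions can be balanced against each other exactly when $\s_\l$ takes opposite signs, the counting goes through; the delicate point is to verify that after all pairings one is left with a single hermitian matrix whose signature profile on $S^1$ reproduces $\s_\l$ and whose nullity profile on $\C\sm\{0\}$ reproduces $\eta_\l$. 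Once this is done, Corollary~\ref{cor:ltdefined} and Proposition~\ref{prop:ra81} guarantee that the resulting matrix of size $N$ represents $\l$, completing the proof.
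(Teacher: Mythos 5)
Your overall two-stage outline --- diagonalize over $\realt$, refine to an elementary diagonal form with entries $\pm B_\xi^n$, then merge diagonal blocks to drive the size down to $\max\{\mu,\eta\}$ --- does match the structure of the paper's proof (via Proposition~\ref{prop:diag}, Lemma~\ref{lem:splitting}, and Propositions~\ref{prop:oncircle}, \ref{prop:notoncircle}). But there is a genuine gap in what you identify as the engine of the reduction.

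You attribute the merging of diagonal blocks to Proposition~\ref{prop:ra81}(3) and describe the move as ``any pair of blocks whose block sum has determinant a unit may be cancelled,'' further suggesting that two $1\times1$ summands can be collapsed into a $2\times2$ block ``reducing $m$ by zero'' and then cancelled. This is not how the reduction works and would not reduce the size of the representing matrix. Proposition~\ref{prop:ra81}(3) only permits discarding a single block summand whose determinant is already a unit; no pair of elementary diagonal entries $\pm B_{\xi}^{n}$ ever has a product that is a unit, so this move is vacuous here. The result that actually does the work is the Decomposition Theorem (Theorem~\ref{th:glue}): if $A$ and $B$ are coprime palindromic polynomials and there is a sign $\vare$ such that for every $z\in S^1_+$ at least one of $\vare A(z)$, $\vare B(z)$ is strictly positive, then $\l(\diag(A,B))\cong\l(\vare AB)$. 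This replaces a $2\times2$ block by a genuine $1\times1$ block, shrinking the matrix by one. It is a nontrivial statement --- its proof requires Proposition~\ref{prop:step3} (producing palindromic $P,Q>0$ on $S^1$ with $PA+QB=1$, via a Stone--Weierstrass approximation argument) and Lemma~\ref{l:step2} (every palindromic $P\ge0$ on $S^1$ factors as $U\ol{U}$), fed into Lemma~\ref{l:step1}. Without recognizing this as a separate result to be proved, the bookkeeping you describe (matching on-circle blocks of opposite sign, absorbing off-circle blocks, then combining the two families) has no mechanism behind it: the positivity hypothesis of Theorem~\ref{th:glue} is exactly what the signed pairings in Propositions~\ref{prop:oncircle} and~\ref{prop:notoncircle}, and the final merge of $f_k$ with $g_k$ in the proof of Theorem~\ref{th:realequal}, are set up to verify. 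So the proposal correctly guesses the shape of the argument but misses, and mislabels, its central technical ingredient.
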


The proof of this theorem will require all  of Sections \ref{section:lemmas} and  \ref{section:mainproof}.
Assuming Theorem \ref{main} we can now easily provide the proof of Theorem \ref{thm3}.

\begin{proof}[Proof of Theorem \ref{thm3}]
Let $K\subset S^3$ be a knot. We write $\l=\l_\R(K)$.
In  \cite[Section 3.1]{BF12} we showed that given any $z\in S^1$ we have $\s_\l(z)=\l_K(z)$ and given any $z\in \C\sm \{0\}$ we have
$\eta_\l(z)=\eta_K(z)$. In particular we have  $\mu(\l)=\mu(K)$ and $\eta(\l)=\eta(K)$.

It is well-known that the Alexander polynomial $\Delta_K$ of $K$ satisfies
$\Delta_K(1)=\pm 1$. Since $\Delta_K(-1)\equiv \Delta_K(1)=1 \bmod 2$ we deduce that  $\Delta_K(-1)\ne 0$.
It now follows easily that multiplication by $t-1$ and $t+1$ are isomorphisms of the Alexander module $H_1(X(K);\realt)$.

The theorem now follows immediately from the above observations and from Theorem \ref{main}.
\end{proof}


\section{Technical lemmas}\label{section:lemmas}

From now on  we write \[ \L:=\R\tpm \mbox{ and } \Omega:=\R(t).\]
Moreover, we write $S^1_+$ for the set of all points on $S^1$ with non-negative imaginary part.
Given $z_1,z_2\in S^1_+$ we can write $z_i=e^{2\pi it_i}$ for a unique $t_i\in [0,\pi]$.
We then write $z_1>z_2$ if $t_1>t_2$. Given $a,b\in S^1_+$ we use the usual interval notation to define subsets $[a,b), (a,b)$ etc. of $S^1_+$.

\subsection{Palindromic polynomials and elementary palindromic polynomials}\label{section:basicspols}

Let us recall the following well known definition.

\begin{definition}
An element $p\in\L$ is called \emph{palindromic} if $p(t)=p(t^{-1})$ as polynomials.
\end{definition}

We say that a function $f\co S^1\to \C$ is \emph{symmetric} if $f(z)=f(\ol{z})$ for all $z\in S^1$.
Note that if  $p$ is palindromic, then $z\mapsto p(z)$ is symmetric.  The next result will be used in the proof of Lemma~\ref{lem:aprox} below.

\begin{lemma}\label{lem:converge}
Palindromic polynomials form a dense subset in the space of all real-valued continuous symmetric functions on $S^1$.
\end{lemma}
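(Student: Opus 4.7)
The plan is to reduce the claim to the classical Weierstrass approximation theorem on the interval $[-1,1]$ via the change of variables $u = \tfrac{1}{2}(t+t^{-1})$.

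First I would observe that restricting to $S^1$, a Laurent polynomial $p(t)=\sum_{k=-N}^{N}a_k t^k\in\L$ is palindromic (i.e.\ $a_k=a_{-k}$) if and only if on $S^1$ we have $p(e^{i\theta})=a_0+2\sum_{k\ge 1}a_k\cos(k\theta)$. Using the Chebyshev recursion $\cos(k\theta)=T_k(\cos\theta)$, each $\cos(k\theta)$ is a polynomial in $\cos\theta=\tfrac{1}{2}(e^{i\theta}+e^{-i\theta})$, and conversely every polynomial in $\cos\theta$ is such a sum. Thus the restrictions of palindromic elements of $\L$ to $S^1$ coincide exactly with $\{q\circ\rre\mid q\in\R[u]\}$, where $\rre\colon S^1\to[-1,1]$ sends $z\mapsto\tfrac{1}{2}(z+z^{-1})$.

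Next I would identify the target space. A continuous function $f\co S^1\to\R$ is symmetric, i.e.\ $f(z)=f(\ol z)$, if and only if it factors as $f=g\circ\rre$ for a unique continuous $g\co[-1,1]\to\R$: the map $\rre$ identifies $z$ with $\ol z$ and realizes the quotient $S^1/(z\sim\ol z)$ homeomorphically with $[-1,1]$, so uniqueness and continuity of $g$ are automatic by the universal property of the quotient (both spaces being compact Hausdorff). Under this identification, uniform approximation of $f$ on $S^1$ by palindromic polynomials is literally the same as uniform approximation of $g$ on $[-1,1]$ by real polynomials.

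Finally I would invoke the classical Weierstrass approximation theorem: real polynomials are dense in $C([-1,1],\R)$ with the uniform norm. Composing a uniform approximant $q_n\to g$ with $\rre$ gives a sequence of palindromic polynomials converging uniformly to $f$ on $S^1$. No step here is really an obstacle; the only thing to be careful about is verifying that the bijection ``palindromic Laurent polynomial $\leftrightarrow$ ordinary polynomial in $\tfrac{1}{2}(t+t^{-1})$'' is correct on both sides, which follows from the Chebyshev identity together with the fact that $\{1,\cos\theta,\cos 2\theta,\ldots\}$ and $\{1,\cos\theta,\cos^2\theta,\ldots\}$ span the same subspace. Alternatively one can skip the factorization through $[-1,1]$ and apply Stone--Weierstrass directly: palindromic polynomials form a unital subalgebra of $C(S^1,\R)$ (closure under products is immediate since the product of palindromic Laurent polynomials is palindromic) which separates points of $S^1$ modulo complex conjugation, hence is dense in the algebra of continuous symmetric real functions.
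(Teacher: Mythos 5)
Your proof is correct, and it takes a somewhat different though closely related route from the paper's. The paper restricts functions and palindromic polynomials to the upper semicircle $S^1_+$, observes that the palindromic polynomials form a subalgebra of $C(S^1_+,\R)$ that separates points there, invokes the Stone--Weierstrass theorem on $S^1_+$, and then extends the approximation to all of $S^1$ using the symmetry $f(\ol z)=f(z)$. Your main argument instead makes the quotient $S^1/(z\sim\ol z)$ explicit as $[-1,1]$ via $z\mapsto\tfrac12(z+z^{-1})$, identifies the restrictions of palindromic Laurent polynomials with ordinary real polynomials in that variable through the Chebyshev identity, and then applies the classical (polynomial) Weierstrass theorem on $[-1,1]$. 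This buys a more elementary analytic input at the cost of spelling out the Chebyshev correspondence and the compact-Hausdorff quotient argument; the paper's phrasing avoids the explicit change of variables but needs the algebra form of Stone--Weierstrass. Your closing one-sentence alternative---viewing the palindromic polynomials as a unital subalgebra of $C(S^1,\R)$ separating points modulo conjugation---is essentially the same idea as the paper's, just stated intrinsically rather than after restriction to $S^1_+$. No gaps.
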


In the lemma we mean `dense' with respect to the  supremum norm.

\begin{proof}
Note that each symmetric function  is determined by its values on $S^1_+$. On $S^1_+$, the palindromic
polynomials form a real algebra which separates points (note that they do not separate points on the whole $S^1$). By the Stone-Weierstrass
theorem (see e.g. \cite[Theorem 7.32]{Rud76}), for any real-valued continuous
function $f$, there
exists a sequence $p_n$ of palindromic polynomials converging to $f$ uniformly on $S^1_+$. As $f(\ol{z})=f(z)$ and $p_n(\ol{z})=p_n(z)$ for all $z\in S^1$,
this convergence extends to the convergence on $S^1$.
\end{proof}

We will make use of  the following terminology.
We write
\[ \Xi:=\{ \xi \in \C\sm \{0\}\, |\, \Im \xi\geq 0 \mbox{ and } |\xi|\leq 1 \}.\]
Furthermore, given $\xi\in \Xi$ we  define
\be\label{eq:defbasic}
B_\xi(t)=
\begin{cases}
t-\xi)(t-\ol{\xi})&\text{if $|\xi|=1$},\\
(t-\xi)(t-\ol{\xi})(1-t^{-1}\ol{\xi}^{-1})(1-t^{-1}\xi^{-1})&\text{if $|\xi|<1$ and $\xi\not\in\R$},\\
(t-\xi)(1-\xi^{-1}t^{-1})&\text{if $\xi\in\R\setminus{\pm 1}$}.
\end{cases}
\ee
These polynomials are  called the \emph{elementary palindromic} polynomials.
We conclude this section with the following observations:
\bn
\item  For any $\xi\in \Xi$ the polynomial $B_\xi(t)$ is a real, palindromic and monic polynomial.
\item For any $\xi$ we have $B_\xi(1)>0$, furthermore if $|\xi|<1$, then $B_\xi$ has no zeros on $S^1$, i.e. $B_\xi$ is positive on $S^1$.
\item Given any $z\in \C\sm \{0\}$ there exists a unique $\xi\in \Xi$ such that $z$ is a zero of $B_\xi(t)$.
Furthermore $B_\xi(t)$ is the unique real, palindromic, monic polynomial of minimal degree which has a zero at $z$.
\item Any palindromic
polynomial in $\L$ factors uniquely as the product of elementary palindromic polynomials and a constant in $\R$.
\en

\subsection{First results}

\begin{lemma}\label{l:step2}\label{lem:norm}
Let $P\in\L$ be palindromic. Then there exists $U\in \L$ with $P=U\ol{U}$ if and only if  $P(z)\ge 0$ for every $z\in S^1_+$.
\end{lemma}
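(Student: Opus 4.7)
The forward direction is immediate: if $P=U\ol{U}$ with $U\in\L$, then for $z\in S^1$ we have $z^{-1}=\ol{z}$ and $U(\ol{z})=\ol{U(z)}$ since $U$ has real coefficients, hence
\[P(z)=U(z)\ol{U}(z)=U(z)U(z^{-1})=U(z)\ol{U(z)}=|U(z)|^2\geq 0.\]
For the converse, assume $P(z)\geq 0$ for all $z\in S^1_+$. Since $P$ is palindromic with real coefficients, $P$ takes real values on $S^1$ and satisfies $P(\ol{z})=P(z)$, so this hypothesis in fact gives $P\geq 0$ on all of $S^1$.

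The strategy is to factor $P$ in $\C[t^{\pm 1}]$ and construct $U$ from ``half'' of its zeros. Let $Z$ denote the multiset of zeros of $P$ in $\C^*$. Palindromy forces $Z$ to be closed under $\alpha\mapsto \alpha^{-1}$ (with multiplicities), while realness of $P$ forces $Z$ to be closed under $\alpha\mapsto \ol{\alpha}$. The orbits of the group generated by these two involutions fall into four types: (a) $\{z,\ol{z}\}$ with $z\in S^1\sm\R$; (b) $\{1\}$ or $\{-1\}$; (c) $\{r,r^{-1}\}$ with $r\in\R\sm\{0,\pm 1\}$; and (d) $\{z,\ol{z},z^{-1},\ol{z}^{-1}\}$ with $z\in\C^*\sm(\R\cup S^1)$. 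The key step is to observe that the hypothesis $P\geq 0$ on $S^1$ forces every zero of $P$ on $S^1$ to have even multiplicity, since near such a zero $P$ is a real-valued continuous function of the angular parameter whose odd-multiplicity zeros would produce a sign change.

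With this in hand, I build $U\in\R[t]\subset\L$ by assigning to each orbit of $Z$ a corresponding factor: for type (a) with multiplicity $2k$, the factor $((t-z)(t-\ol{z}))^k$; for $\{\pm 1\}$ with multiplicity $2k$, the factor $(t\mp 1)^k$; for type (c) with multiplicity $m$, the factor $(t-r)^m$; and for type (d) with multiplicity $m$, the factor $((t-z)(t-\ol{z}))^m$. The conjugate-pair construction ensures $U\in\R[t]$. Using that a zero of $U$ at $\alpha\in\C^*$ of multiplicity $\ell$ produces a zero of $\ol{U}$ at $\alpha^{-1}$ of the same multiplicity, a direct check shows that $U\ol{U}$ has precisely the same multiset of zeros in $\C^*$ as $P$. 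Hence $P/(U\ol{U})$ is a unit in $\C[t^{\pm 1}]$, and since both $P$ and $U\ol{U}$ lie in $\L$, this ratio has the form $ct^m$ with $c\in\R^*$ and $m\in\Z$. Both $P$ and $U\ol{U}$ are strictly palindromic, which forces $m=0$; and evaluating at a point of $S^1$ where $U$ does not vanish gives $c=P(z)/|U(z)|^2\geq 0$. Replacing $U$ by $\sqrt{c}\,U\in\L$ then yields $P=U\ol{U}$.

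The main obstacle is the even-multiplicity claim for zeros of $P$ on $S^1$; the cleanest way to justify it is to pass to the real parameter $s=t+t^{-1}$, writing $P(t)=Q(s)$ for some $Q\in\R[s]$, so the hypothesis becomes $Q\geq 0$ on the interval $[-2,2]$ and the usual sign-change argument for real polynomials applies.
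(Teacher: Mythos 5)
Your proof is correct, and it takes a genuinely different route from the paper's. The paper argues by induction on the degree of $P$: it picks a single zero $\theta$, identifies the corresponding elementary palindromic polynomial $B_\xi$ (or $B_\xi^2$ if $\theta\in S^1$), divides it out, and appeals to the inductive hypothesis on $P/B_\xi$ or $P/B_\xi^2$. You instead give a global, one-shot construction: classify the entire zero multiset of $P$ by orbit type under $\alpha\mapsto\ol\alpha$ and $\alpha\mapsto\alpha^{-1}$, build $U$ directly from ``half'' of each orbit, and then observe that $P/(U\ol U)$ is a unit in $\C[t^{\pm1}]$ which palindromy and positivity force to be a positive real constant. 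Both arguments turn on the same crux --- zeros of $P$ on $S^1$ have even multiplicity because $P\ge 0$ on $S^1$ --- but the bookkeeping is organized quite differently. Your version has the advantage of producing $U$ explicitly; the paper's inductive version meshes better with the rest of the paper since it is phrased in terms of the elementary palindromic polynomials $B_\xi$ defined in Section~\ref{section:basicspols}, which are used repeatedly afterwards.

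One small point worth tightening: your suggested ``cleanest'' justification via $s=t+t^{-1}$ turns the hypothesis into $Q\ge 0$ on the closed interval $[-2,2]$, and the standard sign-change argument only forces even multiplicity of $Q$ at \emph{interior} zeros. At the endpoints $s=\pm2$ (corresponding to $t=\pm1$) $Q$ could a priori have odd multiplicity. This is harmless --- the map $t\mapsto t+t^{-1}$ is branched at $t=\pm1$, with $s\mp2=(t\mp1)^2/t$, so a zero of $Q$ of multiplicity $m'$ at $\pm2$ gives a zero of $P$ of multiplicity $2m'$ at $\pm1$, which is automatically even; equivalently, palindromy forces $P'(\pm1)=0$ whenever $P(\pm1)=0$ --- but it deserves a sentence. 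Your primary justification in the angular parameter $\theta$ does not have this issue, since on $S^1$ every point is interior.
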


\begin{proof}
If $P=U\ol{U}$ for some $U\in \L$, then for each $z\in S^1_+$ we obviously have $P\ge 0$. Conversely, assume that $P(z) \ge 0$ for every $z\in S^1_+$.
Note that by the above discussion this implies that  $P(z) \ge 0$ on $S^1$.
We proceed by induction on the
degree of $P$ (that is the number of zeros counted with multiplicities). If $P$ has degree $0$, then $P$ is constant and there is nothing to prove.

If $P$ has positive degree, we choose $\theta$ to be a zero of $P(t)$.
Since $P(t)=P(t^{-1})$ we see that $\theta^{-1}$ is also a zero of $P$.
Furthermore, since $P(t)$ is a real polynomial we see that if $\mu$ is a zero, then $\ol{\mu}$ is also a zero.
Thus, if $\theta$ is a zero, then $\theta,\ol{\theta},\theta^{-1},\ol{\theta}^{-1}$ are all zeros.

We first consider the case, that $\theta$ lies in $\theta\in \mathbb{C}\setminus S^1$ and that $\theta\not\in \R$.
 Let $\xi\in \Xi$ be the unique element such that $\theta$ is a zero of the elementary palindromic
polynomial $B_\xi(t)$.
Note that $B_\xi(t)$ divides $P(t)$. Furthermore note that
$P_2=\frac{P(t)}{B_\xi(t)}$  has smaller degree and is non-negative on $S^1$. By induction we have $P_2=U_2\ol{U_2}$.
The polynomial  $U=(t-\theta)(t-\ol{\theta})U_2$ then satisfies $P=U\ol{U}$.

We now consider the case that $\theta\in \R$ with $\theta \ne \pm 1$. The argument is almost identical to the first case except
that now, using the above notation, the polynomial  $U=(t-\theta)U_2$ has the desired properties.

We then consider the case that $\theta\in S^1$ with $\theta \ne \pm 1$.  As $P\ge 0$ on $S^1$, the order of the root of $P$ at $\theta$
must be even. Let $\xi\in \Xi$ be the unique element such that $\theta$ is a zero of $B_\xi(t)$.
As $B_\xi$ has only simple roots, $B_\xi(t)^2$  divides $P(t)$. As above note that
$P_2=\frac{P(t)}{B_\xi(t)^2}$  has a smaller degree and is non-negative on $S^1$.  We can thus again appeal to the induction hypothesis.

Finally, if $\theta=\pm 1$, then $P$ is divisible by $(t-\theta)^2$, for the same reason. We write $P_2=\frac{P(t)}{B_\theta}$ and by induction we
have $P_2=U_2\ol{U_2}$. Then we put $U=(t-\theta)U_2$.
\end{proof}


\begin{proposition}\label{prop:step3}
Let $A,B\in \L$ be palindromic coprime polynomials.
If for every $z\in S^1_+$, either $A(z)$ or $B(z)$ is positive, then there exist palindromic $P$ and $Q$ in $\L$
such that $PA+QB=1$ and such that $P(z)$ and $Q(z)$ are positive for any $z\in S^1$.
\end{proposition}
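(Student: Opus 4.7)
My plan is first to find some palindromic B\'ezout coefficients $P_0,Q_0$ satisfying $P_0 A + Q_0 B = 1$, and then to modify them using the only available freedom $P = P_0 + fB$, $Q = Q_0 - fA$ (for palindromic $f$) to arrange positivity on $S^1$. Since $\L = \R[t^{\pm 1}]$ is a principal ideal domain and $A,B$ are coprime, B\'ezout gives $P_1,Q_1\in\L$ with $P_1 A + Q_1 B = 1$. Applying the involution $\ol{\cdot}$, and using that $A,B,1$ are all palindromic, yields $\ol{P_1} A + \ol{Q_1} B = 1$, so the averages $P_0 := (P_1+\ol{P_1})/2$ and $Q_0 := (Q_1+\ol{Q_1})/2$ are palindromic and satisfy $P_0 A + Q_0 B = 1$.

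Next, I would analyze, for each $z \in S^1$, the open set of admissible corrections
\[ I(z) = \{\, x \in \R \,|\, P_0(z) + xB(z) > 0 \text{ and } Q_0(z) - xA(z) > 0 \,\}, \]
which is always an open interval (possibly an unbounded half-line). A short case analysis based on the sign hypothesis shows that $I(z)\neq\varnothing$ for every $z\in S^1$. When $A(z)>0$ and $B(z)>0$ one has $I(z) = (-P_0(z)/B(z),\,Q_0(z)/A(z))$, and the inequality $-P_0/B < Q_0/A$ is equivalent to $P_0 A + Q_0 B > 0$, which holds because $P_0 A + Q_0 B = 1$. When $A(z)\le 0$ the hypothesis forces $B(z)>0$ and $I(z)$ becomes an open upward half-line; symmetrically when $B(z)\le 0$.

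Because $A, B, P_0, Q_0$ are all symmetric under $z \mapsto \ol z$, the sets satisfy $I(\ol z) = I(z)$. A standard partition-of-unity argument, using convexity of intervals, then produces a continuous symmetric function $g \co S^1 \to \R$ with $g(z) \in I(z)$ for every $z$, and compactness of $S^1$ gives some $\e > 0$ with $P_0 + gB \ge 2\e$ and $Q_0 - gA \ge 2\e$ uniformly on $S^1$. Finally, Lemma \ref{lem:converge} lets me approximate $g$ uniformly by a palindromic polynomial $f$ within $\e / \max_{z\in S^1}(|A(z)|,|B(z)|)$; the resulting $P := P_0 + fB$ and $Q := Q_0 - fA$ are palindromic, satisfy $PA + QB = 1$, and remain strictly positive on $S^1$. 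The substantive step is the construction of the global continuous $g$ realizing the pointwise-nonempty $I(z)$, which is exactly where the positivity hypothesis on $A,B$ enters; the B\'ezout and Stone--Weierstrass steps are routine by comparison.
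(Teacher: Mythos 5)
Your proof is correct and follows the same overall strategy as the paper: find symmetric B\'ezout coefficients by averaging, parametrize the remaining freedom as $P_0+fB,\ Q_0-fA$, produce a continuous symmetric selector of the pointwise admissible set, and approximate it by a palindromic polynomial via Lemma~\ref{lem:converge}. The one place where you diverge is the construction of the continuous selector $g$. The paper defines explicit boundary functions $\gammamax,\gammamin$ taking values in $\R\cup\{\pm\infty\}$, proves that $\arctan\circ\gammamax$ and $\arctan\circ\gammamin$ are continuous (Lemma~\ref{lem:etamin}(d), which requires a small limiting argument at the zeros of $A$ and $B$), and then squeezes a symmetric continuous function strictly between them; the paper even remarks that the $\arctan$ trick is there only to avoid extended-real-valued functions. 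You instead observe that $\{(z,x):x\in I(z)\}$ is open in $S^1\times\R$ and that each $I(z)$ is a nonempty open interval, hence convex, so a finite subcover plus a partition of unity (or any standard continuous-selection argument for lower hemicontinuous convex-valued maps on a compact base) immediately yields $g$, with the symmetry obtained either from a symmetric cover or by averaging $g(z)$ with $g(\ol z)$. This avoids the $\arctan$ device and the unbounded-value bookkeeping entirely, at the cost of invoking a slightly heavier (though indeed standard) general principle; the rest of your argument, including the case analysis for nonemptiness of $I(z)$, the symmetry $I(\ol z)=I(z)$, the uniform-margin compactness argument, and the final Stone--Weierstrass approximation, matches the paper's.
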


The idea behind the proof of Proposition \ref{prop:step3} is that if $a,b$ are real numbers and at least one of them is positive, then we can obviously
find  real numbers $p,q>0$ such that $pa+qb=1$.
The statement of the lemma is that this can be done for palindromic coprime polynomials $A$ and $B$ and  any $z\in S^1$ by palindromic polynomials $P$ and $Q$.

The proof of Proposition \ref{prop:step3} will require the remainder of this section.
Let $A,B\in \L$ be palindromic coprime polynomials such that for every $z\in S^1_+$, either $A(z)$ or $B(z)$ is positive.
Note that $A$ and $B$ being palindromic implies that
 it is also the case that for any  $z\in S^1$ we have that  either $A(z)$ or $B(z)$ is positive.

As $A$ and $B$ are coprime, there exist  $P'$ and $Q'$ in $\L$ such that $P'A+Q'B=1$ by Euclid's algorithm.
We now define $\wti{P}:=\frac12(P'+\ol{P'})$ and $\wti{Q}:=\frac12(Q'+\ol{Q'})$.
Note that $\wti{P}$ and $\wti{Q}$ are palindromic and satisfy the equality $\widetilde{P}A+\widetilde{Q}B=1$
since $A,B$ are palindromic.

The functions $\wti{P}$ and $\wti{Q}$ are not necessarily positive on $S^1$. Our goal is to find a palindromic Laurent polynomial $\gamma$
such that $\wti{P}-\gamma B > 0$ and $\widetilde{Q}+\gamma A >0$ on $S^1$.
To this end, let us define two functions
$\gamma_{\max},\gamma_{\min}\colon S^1\to \R\cup\{\infty,-\infty\}$ as follows:
\begin{equation}\label{eq:cond}
\gamma_{\max}(z)=
\left\{ \ba{ll}
\frac{\wti{P}(z)}{B(z)}&\text{if $B(z)>0$}\\
\infty&\text{if  $B(z)\le 0$}\ea \right. \quad \mbox{ and }
\gamma_{\min}(z)=
\left\{\ba{ll}
\frac{-\wti{Q}(z)}{A(z)}&\text{if $A(z)>0$}\\
-\infty&\text{if $A(z)\le 0$.}\ea \right.
\end{equation}
We also consider the usual ordering on the set $\R\cup \{-\infty,\infty\}$.

\begin{lemma}\label{lem:etamin}
The functions $\gamma_{\min}$ and $\gamma_{\max}$ have the following properties:
\begin{itemize}
\item[(a)] $\gamma_{\min}$ and $\gamma_{\max}$ are symmetric functions on $S^1$.
\item[(b)] Let $z\in S^1$. If $\gamma\in(\gamma_{\min}(z),\gamma_{\max}(z)$, then
\[  \wti{P}(z)-\gamma B(z)> 0 \mbox{ and }\wti{Q}(z)+\gamma A(z)> 0.\]
\item[(c)] For all $z\in S^1$ we have $\gamma_{\min}(z)<\gamma_{\max}(z)$.
\item[(d)] The functions
\[ \ba{rcl} S^1&\to& [-\pi/2,\pi/2]\\
 z&\mapsto &\arctan(\gammamax(z))\mbox{ and }\\
z&\mapsto& \arctan(\gammamin(z))\ea \]
are continuous (here we define $\arctan(\infty)=\pi/2$ and $\arctan(-\infty)=-\pi/2$).
\end{itemize}
\end{lemma}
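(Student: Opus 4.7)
The plan is to dispatch parts (a)--(c) quickly, drawing on the identity $\wti P A+\wti Q B=1$ and the sign hypothesis on $A,B$, and to concentrate the real work on the continuity statement (d).

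For (a), I would use the fact that on $S^1$ one has $\ol z=z^{-1}$, so every palindromic Laurent polynomial takes symmetric values on $S^1$. Since $\wti P,\wti Q,A,B$ are all palindromic, the ratios $\wti P(z)/B(z)$ and $-\wti Q(z)/A(z)$ are symmetric, as are the conditions $B(z)>0$ and $A(z)>0$ entering the definitions; symmetry of $\gamma_{\min}$ and $\gamma_{\max}$ then follows at once.

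For (c) I would split on the sign hypothesis. When $A(z),B(z)>0$ simultaneously, the strict inequality $\gamma_{\min}(z)<\gamma_{\max}(z)$ rearranges (after multiplying by $A(z)B(z)>0$) to $\wti P(z)A(z)+\wti Q(z)B(z)>0$, which is the defining identity evaluated at $z$. If exactly one of $A(z),B(z)$ is positive, then one of $\gamma_{\min}(z),\gamma_{\max}(z)$ is $\pm\infty$ and the inequality is automatic. For (b), when $A(z),B(z)>0$ both inequalities follow by clearing denominators. In the mixed case, say $A(z)>0\ge B(z)$, the bound $\wti Q(z)+\gamma A(z)>0$ is immediate from $\gamma>\gamma_{\min}(z)$, and substituting the defining identity yields
\[ \wti P(z)-\gamma B(z)=\frac{1-B(z)\bigl(\wti Q(z)+\gamma A(z)\bigr)}{A(z)},\]
which is positive since $A(z)>0$, $B(z)\le 0$, and $\wti Q(z)+\gamma A(z)>0$.

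The main obstacle is (d), since $\gamma_{\max}$ is defined piecewise on $\{B>0\}$ versus $\{B\le 0\}$ (and $\gamma_{\min}$ dually on $\{A>0\}$ versus $\{A\le 0\}$). I would reduce continuity of $\arctan(\gamma_{\max})$ on all of $S^1$ to the key claim that $\wti P(z_0)>0$ whenever $B(z_0)=0$. Indeed, if $B(z_0)=0$ then the sign hypothesis forces $A(z_0)>0$, and evaluating $\wti P A+\wti Q B=1$ at $z_0$ gives $\wti P(z_0)A(z_0)=1$, so $\wti P(z_0)>0$. Consequently $\wti P(z)/B(z)\to+\infty$ as $z\to z_0$ through $\{B>0\}$, so $\arctan(\gamma_{\max}(z))\to\pi/2$, matching the value $\arctan(\infty)=\pi/2$ assigned on $\{B\le 0\}$. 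Continuity at points where $B>0$ or $B<0$ throughout a neighborhood is then routine from continuity of $\arctan$ and of the polynomials involved. The argument for $\arctan(\gamma_{\min})$ is identical with the roles of $A$ and $B$ (and the relevant signs) interchanged, using $\wti Q(z_0)B(z_0)=1$ at points where $A(z_0)=0$.
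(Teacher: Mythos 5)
Your proof is correct and follows essentially the same route as the paper: (a) from palindromicity, (c) by reducing to the identity $\wti{P}A+\wti{Q}B=1$ when both are positive, (b) by case analysis with the same algebraic rearrangements, and (d) by observing that $B(z_0)=0$ forces $A(z_0)>0$ and hence $\wti{P}(z_0)=1/A(z_0)>0$, so the quotient tends to $+\infty$. The only cosmetic difference is that in (b) you package the mixed-sign case via the explicit quotient $\wti{P}-\gamma B=\bigl(1-B(\wti{Q}+\gamma A)\bigr)/A$, while the paper argues by a chain of inequalities; the underlying computation is the same.
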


\begin{proof}
Statement (a) is obvious since $\wti{P}(\ol{z})=\wti{P}(z)$, and the same holds for $A$, $B$ and $Q$, as all these functions are palindromic
polynomials.

We now turn to the proof of (b).
Let $z\in S^1$ and let $\gamma\in(\gamma_{\min}(z),\gamma_{\max}(z))$. First suppose that $A(z)>0$ and $B(z)>0$. Then it follows from the definitions that
\[ \ba{ccccl} \wti{P}(z)-\gamma B(z) &>& \wti{P}(z)-\gammamax(z) B(z)&=&0\\
\wti{Q}(z)+\gamma A(z)&>& \wti{Q}(z)+\gammamin(z) A(z)&=&0.\ea \]
We then suppose that $A(z)= 0$. Note that this implies that $B(z)>0$ by our assumption on $A$ and $B$. In this case
we see that $\wti{P}(z)-\gamma B(z)>0$  as above. Furthermore, we have
\[
\wti{Q}(z)+\gamma A(z) =\wti{Q}(z)=\frac{1}{B(z)}>0.\]
We finally suppose that $A(z)<0$. As above, $B(z)>0$ and $\wti{P}(z)-\gamma B(z)>0$. Furthermore,
\begin{multline*}
\wti{Q}(z)+\gamma A(z)\geq \wti{Q}(z)+\gammamax(z) A(z)=\\
=\wti{Q}(z)+\frac{\wti{P}(z)}{B(z)}A(z)=\frac{\wti{Q}(z)B(z)+\wti{P}(z)A(z)}{B(z)}=\frac{1}{B(z)}>0.
\end{multline*}
Similarly we deal with the case that $B(z)\leq 0$ and $A(z)>0$.
This proves (b).

We then turn to the proof of  (c). It follows from the definitions that  we only have to consider the case that $A(z)>0$ and $B(z)>0$.
In that case we have
\[ \gammamax(z)=\frac{\wti{P}(z)}{B(z)}=-\frac{\wti{Q}(z)}{A(z)}+\frac{1}{A(z)B(z)} >-\frac{\wti{Q}(z)}{A(z)}=\gammamin(z).\]

Finally we turn to the proof of (d). We will first show that $z\mapsto \arctan(\gammamax(z))$  is continuous.
Clearly we only have to show continuity for $z\in S^1$ such that $B(z)=0$.
We will show the following: if $z_i$ is a sequence of points on $S^1$ with $\lim_{i\to\infty} z_i=z$ such that $B(z_i)>0$
for any $i$,
then $\lim_{i\to\infty} \frac{\wti{P}(z_i)}{B(z_i)}=\infty$.
Indeed, since $B(z)=0$ we have $A(z)>0$ by our assumption.
Now $\wti{Q}$ is bounded on $S^1$, in particular from $\wti{P}A+\wti{Q}B=1$ we deduce that
\[ \lim_{i\to\infty} \wti{P}(z_i)=\lim_{i\to\infty} \frac{1-\wti{Q}(z_i)B(z_i)}{A(z_i)}=\frac{1}{A(z)}>0.\]
It now follows that $\lim_{i\to\infty} \frac{\wti{P}(z_i)}{B(z_i)}=\infty$ as desired.

This completes the proof that $z\mapsto \arctan(\gammamax(z))$  is continuous. Similarly one can prove that  $z\mapsto \arctan(\gammamin(z))$
is continuous.
\end{proof}

\begin{lemma}\label{lem:aprox}
There exists a palindromic polynomial $\gamma$ such that $\gammamin(z)< \gamma(z)<  \gammamax(z)$ for any $z\in S^1$.
\end{lemma}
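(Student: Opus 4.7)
The plan is to turn the two-sided pinching condition $\gammamin(z) < \gamma(z) < \gammamax(z)$, which is awkward because $\gammamin$ and $\gammamax$ may be infinite, into a uniform approximation problem for a bounded continuous function, and then invoke Lemma~\ref{lem:converge}. The key device is the $\arctan$ trick already prepared in Lemma~\ref{lem:etamin}(d).

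First, I would set $u(z):=\arctan(\gammamax(z))$ and $\ell(z):=\arctan(\gammamin(z))$. By Lemma~\ref{lem:etamin}(a),(d) these are continuous symmetric functions $S^1\to[-\pi/2,\pi/2]$, and by part (c) they satisfy $\ell(z)<u(z)$ everywhere. Because $\gammamin$ never takes the value $+\infty$ and $\gammamax$ never takes the value $-\infty$, one has $\ell(z)<\pi/2$ and $u(z)>-\pi/2$ for every $z$. Setting $m:=\tfrac12(u+\ell)$, it follows that $m$ is continuous and symmetric, and $m(z)\in(-\pi/2,\pi/2)$ strictly for every $z$ (equality at $\pm\pi/2$ would force $u=\ell$). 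By compactness of $S^1$, $m$ lies in some closed subinterval $[-\pi/2+\delta,\pi/2-\delta]$ with $\delta>0$, so $f:=\tan\circ m$ is a \emph{bounded} continuous symmetric real-valued function on $S^1$. Moreover, since $\tan$ is strictly increasing on $(-\pi/2,\pi/2)$ and since $m(z)$ lies strictly between $\ell(z)$ and $u(z)$, we get $\gammamin(z)<f(z)<\gammamax(z)$ for every $z$ (reading the inequality through $\tan$ in the finite case, and trivially when one side is infinite).

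Next, I would approximate $f$ by a palindromic polynomial. Set
\[
\epsilon_0:=\inf_{z\in S^1}\tfrac12\bigl(u(z)-\ell(z)\bigr),
\]
which is positive by continuity and compactness. By Lemma~\ref{lem:converge} there is a palindromic polynomial $\gamma\in\L$ with $\|\gamma-f\|_\infty<\epsilon_0/2$. Since $\arctan$ is $1$-Lipschitz on $\R$, we obtain
\[
|\arctan(\gamma(z))-m(z)|=|\arctan(\gamma(z))-\arctan(f(z))|<\epsilon_0/2
\]
for every $z\in S^1$. Combined with $|m(z)-\ell(z)|=|u(z)-m(z)|=\tfrac12(u(z)-\ell(z))\geq\epsilon_0$, this yields $\arctan(\gamma(z))\in(\ell(z),u(z))$ for all $z$. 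Applying $\tan$ (and using monotonicity, with the infinite cases handled trivially) gives $\gammamin(z)<\gamma(z)<\gammamax(z)$, as required.

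The only subtle point is step two, namely checking that the candidate $f=\tan\circ m$ is genuinely bounded (so that sup-norm approximation is available) and lies strictly between $\gammamin$ and $\gammamax$ even at points where one of these is infinite. Both facts follow once one verifies that $m$ stays uniformly away from $\pm\pi/2$, which is precisely where Lemma~\ref{lem:etamin}(c),(d) is used. After that, everything reduces to a routine Lipschitz/$\epsilon$-argument combined with Stone--Weierstrass via Lemma~\ref{lem:converge}.
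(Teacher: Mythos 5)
Your proof is correct and takes essentially the same route as the paper's: both pass to $\arctan$-coordinates to reduce the two-sided pinching to uniform approximation of a bounded continuous symmetric function by palindromic polynomials (Lemma~\ref{lem:converge}), with compactness of $S^1$ supplying the positive margin. The only cosmetic difference is that the paper pins down the bounded target by choosing auxiliary continuous symmetric functions $g_1,g_2$ strictly between $\arctan(\gammamin)$ and $\arctan(\gammamax)$ and measures the gap $c=\inf(\tan g_2-\tan g_1)$ in the untransformed scale, whereas you take the midpoint $m=\tfrac12(u+\ell)$ directly, measure the gap $\epsilon_0$ in the $\arctan$-scale, and transfer the approximation bound through the $1$-Lipschitz property of $\arctan$.
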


The proof of Lemma~\ref{lem:aprox} might be shortened, but one would have to consider continuous functions with values in $\R\cup\{\pm\infty\}$.
The approach using  $\arctan$ function allows us to avoid such functions.

\begin{proof}
We write $f_1=\arctan(\gammamin(z))$ and $f_2=\arctan(\gammamax(z))$.
By Lemma \ref{lem:etamin} we know that $f_1$ and $f_2$ are continuous functions on $S^1$ with $f_1(z)<f_2(z)$ for all $z\in S^1$.
We can now pick continuous functions $g_1,g_2\co S^1\to (-\pi/2,\pi/2)$ (note the open intervals), such that
$f_1(z)<g_1(z)<g_2(z)<f_2(z)$ for any $z\in S^1$. We can assume that $g_1$ and $g_2$ are symmetric  as $f_1$ and $f_2$ are symmetric  by Lemma~\ref{lem:etamin}(a). We have the inequality
\[ \gammamin(z) <\tan(g_1(z)) < \tan(g_2(z)) <\gammamax(z).\]
We now put
\[c:=\inf\left\{\tan(g_2(z))-\tan(g_1(z))\colon z\in S^1\right\}.\]
We have $c\ge 0$. But since $S^1$ is compact and the functions $\tan g_1$ and $\tan g_2$ are continuous, we have in fact $c>0$.
By Lemma~\ref{lem:converge} we can find a palindromic polynomial $\gamma$ which satisfies
\[ \left| \gamma(z)-\frac{1}{2}\big(\tan(g_2(z))+\tan(g_1(z))\big)\right| < \frac{c}{2}\]
for any $z\in S^1$.
It clearly follows that for any $z\ \in S^1$ we have the desired inequalities
 \[ \gammamin(z)< \gamma(z)< \gammamax(z). \]
\end{proof}

We can now conclude the proof of Proposition  \ref{prop:step3}.
By Lemma \ref{lem:aprox} we can find
a palindromic polynomial $\gamma$ such
that $\gammamin(z)< \gamma(z)< \gammamax(z)$ for all $z\in S^1$.
Then
$P=\widetilde{P}-\gamma B$ and $Q=\widetilde{Q}+\gamma A$ satisfy
$P>0$ and $Q>0$ on $S^1$ by Lemma \ref{lem:etamin} and they satisfy
\be\label{eq:ons1}
P(z)A(z)+Q(z)B(z)=1\text{ for all $z\in S^1$.}
\ee
But both sides of \eqref{eq:ons1} are Laurent polynomials  on $\C\setminus\{0\}$ which agree on infinitely many points. Hence
the equality \eqref{eq:ons1} holds on $\C\setminus\{0\}$. So it must also hold in $\L$.
We have thus finished the proof of Proposition \ref{prop:step3}.

\subsection{The Decomposition Theorem}\label{section:decomp}

In the following recall that  for a palindromic $p=p(t)\in \L$ and any $z\in S^1$ we have $p(z)\in \R$.

\begin{theorem}\label{th:glue}\label{thm:glue}
Assume that $A$ and $B$ are two coprime palindromic Laurent polynomials in $\L$.
Suppose there exists $\varepsilon\in \{-1,1\}$
such that for all $z\in S^1_+$,  at least one of the numbers $\vare A(z)>0$ or $\vare B(z)>0$ is strictly positive, then
\[ \l\bp A&0\\ 0&B\ep \cong \l(\vare AB)\]
as  forms over $\L$.
\end{theorem}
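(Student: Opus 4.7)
The plan is to exhibit an explicit isometry between the two Blanchfield forms via a twisted Chinese Remainder Theorem, where the twist is supplied by square roots obtained from Proposition~\ref{prop:step3} and Lemma~\ref{lem:norm}.

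First I would apply Proposition~\ref{prop:step3} to the coprime palindromic polynomials $\vare A$ and $\vare B$, noting that by hypothesis at each $z\in S^1_+$ at least one of them is strictly positive. This yields palindromic $P, Q \in \L$ with $PA + QB = \vare$ (after clearing the factor of $\vare$) and with $P(z), Q(z) > 0$ on all of $S^1$. Lemma~\ref{lem:norm} then provides $U, V \in \L$ with $P = U\ol{U}$ and $Q = V\ol{V}$, furnishing the key identity
\[ U\ol{U}\, A + V\ol{V}\, B \;=\; \vare \quad \text{in } \L. \]

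I would then define a map $\phi \co \L/(AB) \to \L/(A) \oplus \L/(B)$ by $\phi(z) = (Vz \bmod A,\ Uz \bmod B)$. Reducing the identity modulo $A$ shows that $V\ol{V}\, B \equiv \vare \pmod A$, so $V$ is a unit in $\L/(A)$; symmetrically $U$ is a unit in $\L/(B)$. Hence $\phi$ is injective, and since source and target are finite-dimensional $\R$-vector spaces of the same dimension $\deg A + \deg B$, $\phi$ is an $\L$-module isomorphism. Identifying $\L^2/\diag(A,B)\L^2$ with $\L/(A) \oplus \L/(B)$ in the canonical way, compatibility with the Blanchfield forms reduces to the direct computation
\[ \l(\diag(A,B))(\phi(z_1), \phi(z_2)) \;=\; \frac{V\ol{V}}{A}\,\ol{z_1} z_2 + \frac{U\ol{U}}{B}\,\ol{z_1} z_2 \;=\; \frac{BV\ol{V} + AU\ol{U}}{AB}\,\ol{z_1} z_2 \;=\; \frac{\vare}{AB}\,\ol{z_1} z_2, \]
which is precisely $\l(\vare AB)(z_1, z_2)$, since $\vare^{-1} = \vare$.

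The main conceptual point is the choice of twist: the naive CRT map $z \mapsto (z, z)$ respects the module structure but not the forms, and one must \emph{cross} the labels so that the first coordinate carries $V$ while the second carries $U$; this pairs the $A^{-1}$ coefficient with $V\ol{V}$ and the $B^{-1}$ coefficient with $U\ol{U}$, matching the identity $U\ol{U}\,A + V\ol{V}\,B = \vare$ rather than the wrong combination $U\ol{U}\,B + V\ol{V}\,A$. Once this bookkeeping is settled, everything else is a direct application of the machinery of Section~\ref{section:lemmas}, so no genuine obstacles remain.
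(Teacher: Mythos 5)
Your proposal is correct, and it follows the paper's strategy for the hard preparation exactly: apply Proposition~\ref{prop:step3} to $\varepsilon A$ and $\varepsilon B$ to get palindromic $P,Q>0$ on $S^1$ with $PA+QB=\varepsilon$, then factor $P=U\ol{U}$ and $Q=V\ol{V}$ via Lemma~\ref{lem:norm}, yielding the identity $U\ol{U}A+V\ol{V}B=\varepsilon$. Where you diverge is the final step. The paper isolates this as Lemma~\ref{l:step1}: it builds the unimodular $2\times 2$ matrix $N=\bp \ol{V}B & -\ol{U}A\\ U & V\ep$ with $\det N=\varepsilon$, checks that $N\,\diag(A,B)\,\ol{N}^t=\diag(\varepsilon AB,\varepsilon)$, and then invokes Proposition~\ref{prop:ra81} to discard the unit block $\varepsilon$. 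You instead construct the isometry directly at the module level via a twisted CRT map $\phi\colon\L/(AB)\to\L/(A)\oplus\L/(B)$, $z\mapsto(Vz,Uz)$, verify it is a module isomorphism (the same identity shows $V$, $U$ are units mod $A$, $B$ respectively; coprimality gives injectivity, and a dimension count gives surjectivity), and then check the form equality $\frac{V\ol{V}}{A}+\frac{U\ol{U}}{B}=\frac{\varepsilon}{AB}$ by hand. The two conclusions are really the same underlying computation---your $\phi$ is essentially the inverse of the isometry induced by $N$---but your version is slightly more self-contained since it avoids the stabilization machinery of Proposition~\ref{prop:ra81}, whereas the paper's keeps everything at the level of presentation matrices, which is convenient elsewhere in the argument. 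Your emphasis on the ``crossed'' labeling ($V$ in the $A$-slot, $U$ in the $B$-slot) is exactly the right bookkeeping point.
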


We will prove the theorem by combining  Lemma~\ref{l:step2} and Proposition \ref{prop:step3} with the following lemma.

\begin{lemma}\label{l:step1}
Let $\varepsilon=\pm 1$. If there exist  $U,V\in\L$
such that
\be\label{eq:condonU}
U\overline{U}\cdot A+V\overline{V}\cdot B=\varepsilon,
\ee
then we have
\[ \l\bp A&0\\ 0&B\ep \cong \l(\vare AB).\]
\end{lemma}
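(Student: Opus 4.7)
The plan is to exhibit, by Proposition \ref{prop:ra81}, an explicit chain of the three allowed moves transforming $\bp A & 0 \\ 0 & B\ep$ into $(\vare AB)$. Since the hypothesis gives the identity $U\bar U\cdot A+V\bar V\cdot B=\vare$, this looks like a B\'ezout relation, and the natural move is a single change of basis whose determinant is precisely $\vare$ (hence a unit in $\L$) together with a subsequent removal of a $1\times 1$ unit summand.

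Concretely, I would set
\[ P:=\bp U & V \\ -\bar V B & \bar U A \ep. \]
Using that $A,B$ are palindromic (so $\bar A=A$ and $\bar B=B$) a direct calculation gives
\[ \det(P)=U\bar U A+V\bar V B=\vare, \]
which is a unit in $\L$, and
\[ P\bp A & 0 \\ 0 & B\ep \bar P^{\,t}=\bp \vare & 0 \\ 0 & \vare AB\ep. \]
The $(1,1)$ entry is exactly the given B\'ezout identity, the $(1,2)$ and $(2,1)$ entries cancel, and the $(2,2)$ entry is $AB(U\bar U A+V\bar V B)=\vare AB$.

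With this in hand, move (1) of Proposition \ref{prop:ra81} (conjugation by $P$, whose determinant $\vare$ is a unit) shows
\[ \l\bp A & 0 \\ 0 & B\ep\;\cong\;\l\bp \vare & 0 \\ 0 & \vare AB\ep. \]
Since the $1\times 1$ block $(\vare)$ is hermitian with unit determinant, applying move (3) (the inverse of stabilization by a unit-determinant hermitian block) strips off the $(\vare)$ summand and yields
\[ \l\bp \vare & 0 \\ 0 & \vare AB\ep\;\cong\;\l(\vare AB), \]
which gives the desired isometry.

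The only nontrivial part is guessing the matrix $P$; once it is written down, everything is a mechanical verification that relies on nothing beyond palindromicity of $A,B$ and the B\'ezout identity \eqref{eq:condonU}. No continuity or positivity hypotheses enter here — those will only be needed in Proposition \ref{prop:step3} to \emph{produce} such $U,V$ from the sign condition on $A$ and $B$.
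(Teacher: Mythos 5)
Your proof is correct and takes essentially the same approach as the paper's: the matrix you write down, $P=\bp U & V \\ -\ol{V}B & \ol{U}A\ep$, is (up to a row swap and a sign change in one row) the paper's congruence matrix $N=\bp \ol{V}B & -\ol{U}A \\ U & V\ep$, and both land on the block form $\diag(\vare,\vare AB)$ before stripping off the unit $1\times 1$ block via Proposition \ref{prop:ra81}.
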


\begin{proof}
Suppose that there exist  $U,V\in\L$
which satisfy \eqref{eq:condonU}. Then write $X:=\ol{V}B$ and $Y:=-\ol{U}A$
and  take $N=\bp X&Y\\ U&V\ep$. Note that $\det(N)=\eps$.  Then one calculates that
\[N\bp A&0\\ 0&B\ep\ol{N}^t=\bp \varepsilon AB &0 \\ 0 & \varepsilon\ep.\]
The lemma  follows from  Proposition~\ref{prop:ra81}.
\end{proof}

We can now prove Theorem \ref{th:glue}.

\begin{proof}[Proof of Theorem \ref{th:glue}]
Suppose there exists $\varepsilon\in \{-1,1\}$
such that for all $z\in S^1_+$,  $\vare A(z)>0$ or $\vare B(z)>0$.
By Proposition \ref{prop:step3}  there exist palindromic $P$ and $Q$ in $\L$ such that $PA+QB=\vare$ and such that $P(z)$ and $Q(z)$ are positive for any $z\in S^1$.
By Lemma \ref{l:step2}  there exist $U\in \L$ and $V\in \L$ with $P=U\ol{U}$ and $Q=V\ol{V}$.
The theorem  now follows from Lemma \ref{l:step1}.
\end{proof}

\begin{remark}
One easily sees from Theorem~\ref{th:glue} that if $\xi\in\Xi$ and $|\xi|<1$,
then for any $n\ge 1$ we have  $\l(B_\xi^n)\cong\l(-B_\xi^n)$. On the other hand, if $|\xi|=1$ then $\l(B_\xi^n)$ and $\l(-B_\xi^n)$
are non-isometric. This is a counterpart to the following  fact from the classification of isometric structures (see \cite[Proposition 3.1]{Neu82}
or \cite[Section 2]{Ne95}, compare also \cite{Mi69}): for any $\l\in S^1\setminus\{1\}$ and for any $n\ge 1$ there exist
exactly two distinct isometric structures such that the corresponding monodromy operator is the single Jordan block of size $n$
and eigenvalue $\l$. For any $\l\in\mathbb{C}\setminus \{S^1\cup 0\}$, and any $n\ge 1$, there exists a unique isometric structure such that
the corresponding monodromy operator is a sum of two Jordan blocks of size $n$: one with eigenvalue $\l$ and the other one
with eigenvalue $1/\l$.
\end{remark}

\section{The proof of Theorem \ref{main}}\label{section:mainproof}

After the preparations from the last section we are now in a position to provide the proof of Theorem \ref{main}.

\subsection{Diagonalizing Blanchfield forms}\label{section:diag}

Recall that $\L=\realt$ and $\Omega=\R(t)$.
 We say that a Blanchfield form $\l$  over $\L$ is \emph{diagonalizable} if $\l$ can be represented by a diagonal matrix over
$\L$.
The following is the main result of this section.

\begin{proposition} \label{prop:diag}
Let $\l\colon H\times H\to \Omega/\L$ be a  Blanchfield form over $\L$ such that multiplication by $t\pm 1$ is an isomorphism.
Then  $\l$ is diagonalizable.
\end{proposition}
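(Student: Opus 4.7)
The plan is to establish the diagonalizability of $\l$ in two stages. First, use the primary decomposition of $H$ over the PID $\L$ to split $\l$ orthogonally as a sum of Blanchfield forms on $B_\xi$-primary modules, one for each elementary palindromic polynomial $B_\xi$ with $\xi\in\Xi$. Then, on each primary summand, inductively peel off cyclic orthogonal direct summands, each represented by a $1\times 1$ hermitian matrix over $\L$; assembling these yields the diagonal matrix.

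For the primary decomposition, recall from Section~\ref{section:basicspols} that the palindromic primes of $\L=\realt$ are precisely the elementary palindromic polynomials $B_\xi$. The structure theorem gives $H = \bigoplus_{\xi\in\Xi} H_\xi$ with $H_\xi = \{a\in H : B_\xi^n a=0 \text{ for some } n\}$; the hypothesis that $t\pm 1$ act invertibly excludes $\xi=\pm 1$. To see this decomposition is $\l$-orthogonal, let $a\in H_\xi$ and $b\in H_\eta$ with $\xi\ne\eta$, and choose $m,n$ with $B_\xi^m a=0$ and $B_\eta^n b=0$. Sesquilinearity together with the fact that $B_\xi$ is palindromic (so $\ol{B_\xi}=B_\xi$ in $\L$ up to a unit) forces $\l(a,b)\in\Omega/\L$ to be annihilated by both $B_\xi^m$ and $B_\eta^n$, which are coprime; hence $\l(a,b)=0$. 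Non-singularity of $\l$ therefore restricts to non-singularity on each $H_\xi$.

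Next, fix $\xi$ and let $N$ be the largest integer with $B_\xi^{N-1}H_\xi\ne 0$. I would induct on the length of $H_\xi$ as an $\L$-module. The key subclaim is that there exists $x\in H_\xi$ of exact $B_\xi$-order $N$ such that $\l(x,x)\equiv c/B_\xi^N\pmod{\L}$ with $\gcd(c,B_\xi)=1$, i.e. $\l|_{\L x}$ is non-singular. Granted this, $\L x\cong \L/B_\xi^N\L$ is an orthogonal direct summand of $H_\xi$, and after rescaling $x$ by a suitable unit in $\L/B_\xi^N\L$ to normalize $c$ the restricted form is represented by a $1\times 1$ palindromic matrix; applying induction to $(\L x)^\perp$ completes the diagonalization. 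To produce such an $x$, start with any $x_0$ of order $N$; non-singularity of $\l$ yields $y\in H_\xi$ with $\l(x_0,y)=d/B_\xi^N\pmod{\L}$ and $\gcd(d,B_\xi)=1$. If $\l(x_0,x_0)$ is not already non-singular on $\L x_0$, replace $x_0$ by $x_0+sy$ for a scalar $s\in\L$ and compute
\[ \l(x_0+sy,\,x_0+sy) \;=\; \l(x_0,x_0) + s\bigl(\l(x_0,y)+\ol{\l(x_0,y)}\bigr) + s\ol{s}\,\l(y,y). \]
Reducing the coefficient of $B_\xi^{-N}$ modulo $B_\xi$ yields a quadratic expression in $s$, and the non-vanishing of $d$ modulo $B_\xi$ supplies a choice of $s$ making the new $\l(x,x)$ non-singular on $\L(x_0+sy)$.

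The technical step I expect to be the main obstacle is precisely this existence of $x$ of maximal order with $\l(x,x)$ non-singular. The delicate case is $|\xi|=1$, where the residue field $\L/B_\xi\cong\C$ carries a non-trivial involution given by complex conjugation, and the linear cross-term $d+\ol{d}$ can vanish modulo $B_\xi$. Handling this requires either invoking the quadratic term $s\ol{s}\,\l(y,y)$ or, if $\l(y,y)$ also has too small a denominator, replacing $y$ by an $\L$-linear combination such as $y+t^jy$ to shift the cross-term away from the kernel of $d\mapsto d+\ol{d}$. It is at this local step that the reality of $\R$ and the explicit structure of the elementary palindromic polynomials $B_\xi$ are essential; once the local obstruction is overcome the global diagonalization follows by the induction outlined above.
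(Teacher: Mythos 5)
Your overall plan — primary decomposition of $H$ over the PID $\L$, then peeling off cyclic orthogonal summands — is the same as the paper's, but two important points in your execution are off, one of them a genuine gap.

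\textbf{A factual error.} The elementary palindromic polynomials $B_\xi$ are \emph{not} all irreducible in $\L=\realt$, so they are not ``the palindromic primes.'' Only the $B_\xi$ with $|\xi|=1$, $\xi\ne\pm 1$ are irreducible. For $|\xi|<1$ (and for $\xi\in\R\setminus\{\pm 1\}$) one has $B_\xi\doteq p\ol{p}$ with $p$ irreducible and $p,\ol{p}$ coprime (not associate). Consequently the ``$B_\xi$-primary part'' in these cases is really $H_p\oplus H_{\ol{p}}$, and — this is structurally crucial — $\l$ vanishes \emph{identically} on $H_p$ and on $H_{\ol{p}}$, because $\l(H_p,H_q)=0$ whenever $p$ and $\ol{q}$ are not associate. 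So the candidate $x_0$ of maximal $B_\xi$-order you start with will typically have $\l(x_0,x_0)=0$, and the whole logic of your $x_0+sy$ perturbation changes: one must take a ``mixed'' element $a\oplus xb$ with $a$ generating a maximal cyclic piece of $H_p$ and $b$ a dual generator in $H_{\ol{p}}$, and then verify $\l(a\oplus xb,\,a\oplus xb)=p^{-n}\ol{p}^{-n}$. This is what the paper's second claim in the proof of Proposition~\ref{prop:diag} does. You flag $|\xi|=1$ as the delicate case, but in fact this $|\xi|\ne 1$ case requires a genuinely different argument that your sketch does not supply.

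\textbf{The real gap.} Even once you have split off a cyclic orthogonal summand $\L x\cong\L/p^n\L$ with $\l(x,x)=c\,p^{-n}$ and $\gcd(c,p)=1$, you say one can ``rescale $x$ by a suitable unit in $\L/B_\xi^N\L$ to normalize $c$.'' This is not automatic and is in fact the technical heart of the diagonalization. A $1\times 1$ hermitian matrix over $\L$ whose cokernel is $\L/p^n\L$ must be $\pm r\,p^n$ (with $r>0$, and $p$ palindromic), which represents the form with discriminant $\pm 1$. An isometry from your cyclic piece to this standard one is multiplication by a unit $g\in(\L/p^n\L)^\times$, which changes $c$ into $g\ol{g}\,c$. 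So what must be shown is that every palindromic $c$ coprime to $p$ is of the form $\pm g\ol{g}$ modulo $p^n$. This is precisely the content of the claim inside the paper's Proposition~\ref{prop:diagcycl}, and it is proved there by an induction on the number of sign changes of $c$ on $S^1_+$, using Proposition~\ref{prop:step3} (the positivity version of B\'ezout for palindromic coprime polynomials). In other words, the local ``normalization'' you wave past is where all the real-analytic machinery of Section~\ref{section:lemmas} actually enters. Without it, you have an orthogonal decomposition into cyclic summands, but you have not shown the form is represented by a diagonal matrix over $\L$.

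Finally, a smaller comment: in the $|\xi|=1$ case, the concern you raise about the cross-term $d+\ol{d}$ vanishing is most cleanly handled the way the paper does it — normalize the cross pairing to be exactly $p^{-l}$ (possible since $d$ is invertible mod $p^l$), after which $d+\ol{d}=2p^{-l}\ne 0$ because $p$ is palindromic; no quadratic-in-$s$ analysis is then needed.
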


In order to prove the proposition we will first consider the following special case.

\begin{proposition} \label{prop:diagcycl}
 Let $p\in \L$ be a palindromic polynomial, irreducible over $\R$.
Let $H=\L/p^n\L$ for some $n$ and let  $\l\colon H\times H\to \Omega/\L$ be a  Blanchfield form over $H$.
Then  $\l$ is diagonalizable.
\end{proposition}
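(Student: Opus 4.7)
The plan is to show that $\l$ is isomorphic to the form represented by the single $1\times 1$ hermitian matrix $(\vare p^n)$ for some $\vare\in\{\pm 1\}$; such a matrix is manifestly diagonal. Since $H=\L/p^n\L$ is cyclic, $\l$ is completely determined by $\l(1,1)\in\Omega/\L$, which I write as $q/p^n+\L$ for some $q\in\L$. Non-singularity of $\l$ forces $\gcd(q,p)=1$, and hermiticity together with $p^n$ being palindromic forces $q\equiv\ol{q}\pmod{p^n}$; averaging $q$ with $\ol{q}$ I may assume $q$ itself is palindromic. It then suffices to produce $g\in\L$ coprime to $p$ with $q\cdot\ol{g}g\equiv \vare\pmod{p^n}$, since the map $\phi\co \L/p^n\L\to H$ sending $1\mapsto g$ is then an $\L$-module isomorphism identifying the form $\l((\vare p^n))$ with $\l$.

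To build such a $g$, note that $p$ being palindromic and irreducible in $\L$ forces its roots to be a complex conjugate pair $\xi,\ol{\xi}\in S^1\sm\{\pm 1\}$. Since $q$ is palindromic and coprime to $p$, the value $q(\xi)$ is a nonzero real number, and I set $\vare:=\sign(q(\xi))$. Next I choose a palindromic $S\in\L$ with $qS\equiv 1\pmod{p^n}$: any lift $S_0\in\L$ of $q^{-1}\in(\L/p^n\L)^\times$ satisfies, after applying the involution to $qS_0\equiv 1$ and using $\ol{q}=q$ together with coprimality of $q$ and $p$, the congruence $S_0\equiv\ol{S_0}\pmod{p^n}$, so $S:=\tfrac{1}{2}(S_0+\ol{S_0})$ is palindromic and still a mod-$p^n$ inverse of $q$. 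I then define
\[ R := \vare S + C\cdot p^{2n} \]
for a positive constant $C$ to be chosen. The polynomial $R$ is palindromic, and I claim $R(z)>0$ for every $z\in S^1$ once $C$ is large enough: $p^{2n}=(p^n)^2\ge 0$ on $S^1$ with zeros only at $\xi,\ol{\xi}$, and at those two points $R$ reduces to $\vare S(\xi)=\sign(q(\xi))/q(\xi)=1/|q(\xi)|>0$ via $S(\xi)=1/q(\xi)$; away from a neighborhood of $\{\xi,\ol{\xi}\}$ the term $C p^{2n}$ is bounded below by a positive multiple of $C$ and dominates the bounded $\vare S$.

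Lemma \ref{l:step2} applied to the positive palindromic polynomial $R$ now gives $R=\ol{g}g$ for some $g\in\L$, and the inequality $R(\xi)>0$ ensures $g$ is coprime to $p$. Reducing modulo $p^n$ kills the $Cp^{2n}$ term and yields $\ol{g}g\equiv\vare S\pmod{p^n}$; multiplying by $q$ then gives the desired norm identity $q\cdot\ol{g}g\equiv \vare qS\equiv \vare\pmod{p^n}$, so $\l\cong\l((\vare p^n))$ and the proposition follows. The main delicacy I anticipate is packaging the positivity construction of $R$ rigorously and verifying that the various palindromic-preserving steps (averaging, multiplication, inversion modulo $p^n$) all go through; these rely crucially on $p$ being palindromic and irreducible in $\L$, which confines its roots to $S^1\sm\{\pm 1\}$ and makes $p^{2n}$ a nonnegative palindromic polynomial with controlled zero locus, precisely the input Lemma \ref{l:step2} requires.
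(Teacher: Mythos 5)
Your proof is correct, and it takes a genuinely different and arguably cleaner route than the paper's. The paper reduces to the same claim --- finding $g$ coprime to $p$ and $\vare\in\{\pm 1\}$ with $q\equiv\vare g\ol{g}\pmod{p^n}$ --- but proves it by induction on the number of sign changes of $q$ on $S^1_+$, peeling off one sign change at a time and invoking Proposition~\ref{prop:step3} (the positivity-preserving B\'ezout lemma) at each step. Your construction sidesteps both the induction and Proposition~\ref{prop:step3} entirely: you invert $q$ modulo $p^n$ by a palindromic $S$ with $\vare S(\xi)>0$, then add $Cp^{2n}$ with $C\gg 0$ so the result $R$ is strictly positive on all of $S^1$ (positive near $\xi,\ol\xi$ by continuity since $\vare S(\xi)=1/|q(\xi)|>0$, and positive elsewhere because $p^{2n}$ is bounded below there), and $R\equiv\vare S\pmod{p^n}$ since $2n\ge n$. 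Lemma~\ref{l:step2} then gives $R=g\ol g$, and $R(\xi)>0$ guarantees $\gcd(g,p)=1$. The only housekeeping you might add explicitly is the trivial case $p$ constant (so $H=0$), and the observation that a non-constant irreducible palindromic $p\in\L$ necessarily has degree $2$ with its two roots a conjugate pair on $S^1\setminus\{\pm1\}$ --- which you use but do not quite spell out. Everything else, including the verification that $S$ can be taken palindromic by averaging, the evaluation $S(\xi)=1/q(\xi)$, and the final isometry $1\mapsto g$ from $\l((\vare p^n))$ to $\l$, is sound. This is a nice self-contained argument that relies on strictly less machinery than the paper's.
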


In the proof of the proposition we will need the following definition.
If  $g$ is a palindromic polynomial and if $z\in S^1$, then we  say that \emph{ $g$ changes sign at $z$} if in any neighborhood of $z$ on $S^1$ the function $g$ has both positive and negative values.

\begin{proof}
If $p$ is a constant, then there is clearly nothing to prove. Now assume that $p$ is 
 an irreducible palindromic polynomial over $\R$ which is not a constant. It follows that  $\deg(p)=2$
 and we thus deduce from the discussion in Section \ref{section:basicspols} that  the zeros of $p$ lie on $S^1\sm \{\pm 1\}$.
Throughout this proof  let $w$ be the (unique) zero of $p$ which lies in $S^1_+$.
Since $p(1)\ne 0$ we can  multiply $p\in \L$ by the sign of $p(1)$ and we can therefore, without loss of generality,
assume that $p(1)>0$.

\begin{claim}
Let $q$ be a palindromic polynomial coprime to $p$.
Then there exists $g\in \L$ and $\eps\in \{-1,1\}$ such that $q=\eps g\ol{g}\in \L/p^n\L$.
\end{claim}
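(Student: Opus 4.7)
The plan is to reduce the claim to Lemma~\ref{lem:norm} by exhibiting a palindromic polynomial $P\in\L$ with $P\equiv \eps q \pmod{p^n}$ for some $\eps\in\{\pm 1\}$ and $P(z)\geq 0$ for all $z\in S^1_+$. Lemma~\ref{lem:norm} will then produce $g\in\L$ with $P=g\ol{g}$, and hence $q\equiv \eps g\ol{g}$ in $\L/p^n\L$, as required.

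First I would fix the sign $\eps$. Since $p$ and $q$ are coprime in $\L$ and $w$ is a root of $p$, we have $q(w)\neq 0$. Because $q$ is palindromic with real coefficients, $q(w)=q(w^{-1})=q(\ol{w})=\ol{q(w)}$, so in fact $q(w)\in\R$. I would then set $\eps:=\sgn(q(w))\in\{\pm 1\}$, which guarantees $\eps q(w)>0$.

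Next I would consider the palindromic polynomial
\[ P \;:=\; \eps q + C\,p^{2n},\]
where $C>0$ is a constant to be chosen. Clearly $P\equiv \eps q \pmod{p^n}$ (indeed $\pmod{p^{2n}}$). To arrange $P\geq 0$ on $S^1_+$, I would use that $p^{2n}=(p^n)^2$ is non-negative on $S^1$ and vanishes only at the roots $w,\ol{w}$ of $p$; in particular, its only zero in $S^1_+$ is $w$. At $z=w$ one has $P(w)=\eps q(w)>0$, so by continuity $P>0$ on some open neighborhood $U\subset S^1_+$ of $w$, independently of $C$. On the compact complement $S^1_+\setminus U$, the continuous function $p^{2n}$ is bounded below by some $\delta>0$ while $\eps q$ is bounded below by some $-M$, so $P\geq C\delta-M\geq 0$ once $C\geq M/\delta$.

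The main subtlety is the use of $p^{2n}$ rather than $p^n$ as the ``booster'': because $p$ has two simple zeros on $S^1$ and changes sign at them, an odd power $p^k$ would itself change sign at $w$ and could not serve as a non-negative correction. The even exponent $2n$ ensures simultaneously that the correction is non-negative on $S^1$ and divisible by $p^n$. Once such a $P$ is in hand, Lemma~\ref{lem:norm} delivers $g\in\L$ with $P=g\ol{g}$, and then $\eps g\ol{g}=\eps P\equiv q\pmod{p^n}$, completing the argument.
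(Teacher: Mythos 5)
Your proof is correct, but it takes a genuinely different and more direct route than the paper. The paper's argument is an induction on the number of sign changes of $q$ on $S^1_+$: at each step it picks a sign-change point $v\neq w$, takes an irreducible quadratic $f$ vanishing at $v$, and applies Proposition~\ref{prop:step3} to $p^n$ and $f$ to produce palindromic $x,y>0$ on $S^1$ with $p^nx+fy=1$; replacing $q$ by $q':=qfy\equiv q\pmod{p^n}$ removes one sign change, and the base case ($s(q)=0$, with $\eps=\sgn(q(1))$) is disposed of by Lemma~\ref{lem:norm}. You instead note that $\eps$ is forced to equal $\sgn(q(w))$ (as indeed it must be, since any congruence $q\equiv\eps g\ol{g}\pmod{p^n}$ evaluated at $w$ gives $q(w)=\eps|g(w)|^2$ with $q(w)\neq 0$), and then make $\eps q$ non-negative on $S^1_+$ in a single step by adding $Cp^{2n}$ for $C\gg 0$, after which Lemma~\ref{lem:norm} applies directly. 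This bypasses Proposition~\ref{prop:step3} entirely for this claim (that proposition is of course still used elsewhere, notably in the Decomposition Theorem), and is shorter. One small wording point: the neighborhood $U$ of $w$ should be chosen so that $\eps q>0$ on $U$, which is a property of $q$ alone; then $P=\eps q+Cp^{2n}\geq \eps q>0$ on $U$ for every $C\geq 0$ because $p^{2n}\geq 0$. As phrased, "by continuity $P>0$ on some neighborhood $U$, independently of $C$" applies continuity to $P$ (which does depend on $C$) and so reads slightly circularly, but the intended and correct argument is clear.
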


We first show that the claim implies the proposition.
 Note that $\l$ takes values in $p^{-n}\L/\L$.
 We pick a representative $q'\in \L$ of $p^n\cdot \l(1,1)\in \L/p^n\L$.
 Since $\l$ is hermitian we have $q'\equiv \ol{q}' \bmod p^n\L$.
 We now let $q=\frac{1}{2}(q'+\ol{q}')$. Note that $q$ is palindromic and $q\in \L$ is a
 representative  of $\l(1,1)p^n\in \L/p^n\L$.
Since $\l$ is non-singular it follows that $q$ is coprime to $p$.
 By the claim there exists $g\in \L$ and $\eps\in \{-1,1\}$ such that $q=\eps g\ol{g}\in \L/p^n\L$.
 The map $\L/p^n\L\to \L/p^n\L$ which is given by multiplication by $g$ is easily seen to define an isometry from  $\l(\eps p^n)$ to $\l$.
 (Here recall that $\l(\eps p^n)$ is the Blanchfield form defined by the $1\times 1$-matrix $(\eps p^n)$.)
In particular $\l$ is represented by the $1\times 1$-matrix $\eps p^n$.

We now turn to the proof of the claim.
Given $g\in \L$ we define
\[ s(g):=\# \{ z\in S^1_+ \, |\, g \mbox{ changes sign at }z\}.\]
We will prove the claim by induction on $s(q)$.
If $s(q)=0$, then we denote by $\eps$ the sign of $q(1)$. It follows that $q\eps$ is non-negative
on $S^1$, hence by Lemma \ref{l:step2} there exists $g\in \L$ with $q\eps=g\ol{g}$.

Now suppose the conclusion of the claim holds for any  palindromic $q$ with $s(q)<s$.
Let $q$ be a  palindromic polynomial in $\L$ with $s(q)=s$.
Let $v\in S^1_+$ be a point where $q$ changes sign. Recall that we denote by $w$ the unique zero of $p$ which lies in $S^1_+$.
Note that $v\ne w$ since we assumed that $p$ and $q$ are coprime.

First consider the case that $v<w$. Let $f\in \L$ be an irreducible polynomial such that $f(v)=0$.
Note that $f$ is palindromic and $f(1)\ne 0$. We can thus arrange that $f(1)<0$.
Note that $p$ changes sign on $S^1_+$ precisely at $w$ and $f$ changes sign precisely at $v$.
We thus see that for any $z\in S^1_+$ with $z<w$ we have $p(w)>0$ and for any $z\in S^1_+$ with $z>v$ we have
$f(z)>0$. It follows that for any $z\in S^1_+$ either $f$ or $p$ is positive.
Note that $f$ and $p$ are coprime, we can thus apply Proposition \ref{prop:step3}  to conclude that
there exist palindromic $x$ and $y$ in $\L$
such that $p^nx+fy=1$ and such that $x(z)$ and $y(z)$ are positive for any $z\in S^1$.

We now define $q':=qfy$. Note that
\[ q=q(p^nx+fy)=qfy=q' \in \L/p^n\L.\]
Also note that $q$ and $f$ change sign at $v$. It follows that $qf$ does not change sign at $v$.
Since $z$ is the only zero of $f$ in $S^1_+$ and since $y$ is positive for any $z\in S^1$ it follows that
$s(q')=s(q)-1$. By our induction hypothesis we can thus write
\[ q=q'=\eps g\cdot \ol{g} \in \L/p^n\L\]
for some $g\in \L$.

Now consider the case that $v>w$. Let $f\in \L$ be an irreducible polynomial such that $f(w)=0$ (note that $v$
can not be equal to $-1$, because $q$ changes sign at $v$ and $q$ is palindromic).
Note that $f$ is palindromic and $f(1)\ne 0$. We can thus arrange that $f(1)>0$.
As above we  see that for any point on $S^1_+$ either $f$ or $p$ is negative.
By Proposition \ref{prop:step3}  there exist palindromic $x$ and $y$ in $\L$
such that $p^nx+fy=1$ and such that $x(z)$ and $y(z)$ are negative for any $z\in S^1$.
The proof now proceeds as in the previous case.
\end{proof}

Let $p\in \L$ be an irreducible polynomial. In the following we say that a $\L$-module $H$ is \emph{$p$-primary}
if any $x\in H$ is annihilated by a sufficiently high power of $p$. Given a $p$-primary $\L$-module we introduce the following definitions:
\bn
\item  given $h\in H$ we write $l(h):=\min\{ k\in \N \, |\, p^kh=0\}$,
\item  we write $l(H):=\max\{ l(h)\, |\, h\in H\}$,
\item we  denote by $s(H)$ the minimal number of generators of $H$.
\en
 We will later need the following lemma.

\begin{lemma}\label{lem:subsummand}
Let $p\in \L$ be an irreducible polynomial.
Let $H$ be a finitely generated $p$-primary module. Let $v\in H$ with $l(v)=l(H)$.
Then there exists a direct sum decomposition
\[ H = H'\oplus v\L/p^{l(v)}\]
with $s(H')=s(H)-1$.
\end{lemma}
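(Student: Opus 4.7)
The strategy is to reduce the lemma to the standard fact that, in a finitely generated $p$--primary module over a PID, an element whose annihilator has maximal exponent generates a direct summand. The ring $\L=\R[t^{\pm 1}]$ is a localization of the PID $\R[t]$, hence itself a PID, so the general structure theorem is available.

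The plan is as follows. First I would invoke the structure theorem to write
\[ H \;=\; \bigoplus_{i=1}^s \L e_i, \qquad \Ann(e_i)=p^{a_i}\L, \]
where $s=s(H)$ (this is the number of cyclic summands, which equals $\dim_{\L/p\L}(H/pH)$, and hence is indeed the minimal number of generators) and where $l(H)=\max_i a_i$. Reorder so that $a_1\geq a_2\geq\dots\geq a_s$.

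Next I would analyze the hypothesis $l(v)=l(H)=a_1$. Writing $v=\sum_i r_i e_i$, the order of each summand $r_i e_i$ is $p^{a_i-\min(a_i,v_p(r_i))}$, where $v_p$ denotes the $p$--adic valuation on $\L$; since $H$ is $p$--primary the annihilator of the sum is the maximum of these, so
\[ l(v)\;=\;\max_i\bigl(a_i-\min(a_i,v_p(r_i))\bigr). \]
For this maximum to equal $a_1$, there must exist an index $i_0$ with $a_{i_0}=a_1$ and $v_p(r_{i_0})=0$; in other words, $r_{i_0}$ is a unit modulo $p$, and hence a unit modulo $p^{a_{i_0}}$.

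Finally I would construct the splitting. Put $H':=\bigoplus_{j\neq i_0}\L e_j$, so that $s(H')=s-1=s(H)-1$. I claim $H=H'\oplus \L v$. For the intersection, if $w=sv\in H'$ then the $e_{i_0}$--component $sr_{i_0}$ vanishes in $\L/p^{a_{i_0}}\L$; since $r_{i_0}$ is a unit there, $s\in p^{a_{i_0}}\L=\Ann(v)$, so $w=0$. For the sum, using that $r_{i_0}$ is invertible modulo $p^{a_{i_0}}$ we can solve for $e_{i_0}$ in terms of $v$ and the remaining $e_j$, showing $e_{i_0}\in \L v+H'$, and the other $e_j$ are already in $H'$. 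Since $\L v\cong \L/\Ann(v)=\L/p^{l(v)}\L$, this gives the required decomposition.

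The only substantive step is the unit--coefficient observation in Step 2; everything else is bookkeeping on a direct--sum decomposition. I do not anticipate a real obstacle, provided one is comfortable invoking the structure theorem for finitely generated modules over the PID $\L=\R[t^{\pm 1}]$.
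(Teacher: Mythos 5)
Your proof is correct and follows essentially the same route as the paper: invoke the structure theorem for finitely generated modules over the PID $\L$, observe that $l(v)=l(H)$ forces the coefficient of $v$ on some maximal-order cyclic generator to be coprime to $p$ (hence a unit in the relevant quotient $\L/p^{l}\L$), and then substitute that generator by $v$ in the decomposition. The only cosmetic difference is that the paper first bundles all lower-order cyclic summands into a submodule $H''$ before doing the coefficient analysis, whereas you work directly with the full list of cyclic summands ordered by exponent; the substance of the argument is identical.
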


\begin{proof}
We write $l=l(H)$ and $s=s(H)$.
Since $\L$ is a PID we can apply the classification theorem
for finitely generated $\L$-modules
(see e.g. \cite[Theorems~7.3~and~7.5]{La02})
to find $e_1,\dots,e_k\in \L$ and a submodule $H''\subset H$ with the following properties:
\bn
\item $l(e_i)=l$ for $ i=1,\dots,k$,
\item $H=H''\oplus \bigoplus_{i=1}^k e_i \L/p^l\L$,
\item $s(H'')=s-k$,
\item for any $w\in H''$ we have $l(w)<l$.
\en
Now we can write
$ v=v''+\sum_{i=1}^k a_ie_i$ for some $v''\in H''$ and $a_i\in \L/p^l\L$.
Note that $l(v)=l$ implies that there exists at least one $a_j$ which is coprime to $p$.
We pick $x\in \L$ with $xa_j=1\in \L/p^l$. It is clear that
\[ H=H''\oplus \bigoplus_{i\ne j} e_i \L/p^l\L \oplus xv \L/p^l\L.\]
Since $ xv \L/p^l\L= v \L/p^l\L$ we get the desired decomposition.
Furthermore, it is clear that
\[ s\big(H''\oplus \bigoplus_{i\ne j} e_i \L/p^l\L\big)=s-1.\]
\end{proof}

\begin{lemma} \label{lem:palindromic}
Let $p\in \L$ be a non-zero irreducible palindromic polynomial.
Let $H$ be a $p$-primary module and let $\l\colon H\times H\to \Omega/\L$ be a Blanchfield form.
Then $\l$ is diagonalizable.
\end{lemma}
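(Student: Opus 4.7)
The plan is to induct on $s(H)$, the minimal number of $\L$-generators of $H$. The base case $s(H)\le 1$ is immediate: either $H=0$, or $H\cong \L/p^n\L$ and Proposition~\ref{prop:diagcycl} applies. So assume $s(H)\ge 2$ and that the result holds for all $p$-primary modules with fewer generators.

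Set $l:=l(H)$. The strategy is to split off an orthogonal cyclic summand by producing $v\in H$ with $l(v)=l$ and $\l(v,v)=a/p^l$ for some $a\in \L$ with $\gcd(a,p)=1$. Given such a $v$, Lemma~\ref{lem:subsummand} yields a $\L$-module decomposition $H=v\L\oplus H'$ with $s(H')=s(H)-1$, and coprimality of $a$ with $p$ makes $\l$ non-degenerate on $v\L$. A Gram--Schmidt step (solving $\l(h-\alpha v,v)=0$ for $\alpha\in\L/p^l$, which is possible because $a$ is a unit in $\L/p^l$) then upgrades this to an orthogonal decomposition $H=v\L\oplus (v\L)^\perp$. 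The summand $(v\L)^\perp$ is a $p$-primary module with $s((v\L)^\perp)=s(H)-1$ carrying a non-singular Blanchfield form, so it is diagonalizable by the inductive hypothesis, while $\l|_{v\L}$ is trivially diagonal as a $1\times 1$ form.

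The heart of the proof is producing $v$, and my plan is a proof by contradiction. Suppose every $u\in H$ with $l(u)=l$ has $\l(u,u)$ of denominator strictly less than $p^l$. Pick any $v_0$ of order $p^l$. Non-singularity of $\l$ forces the character $\l(v_0,-)$ to have exact order $p^l$, so there exists $w\in H$ with $\l(v_0,w)=c/p^l$ and $\gcd(c,p)=1$; the identity $\l(v_0,p^{l-1}w)=c/p\ne 0$ then forces $l(w)=l$. For any $j\in \Z$, the computation
\[ \l(v_0+t^jw,\,v_0+t^jw)=\l(v_0,v_0)+\l(w,w)+\frac{t^jc+t^{-j}\bar c}{p^l} \]
(using $t^j\cdot\overline{t^j}=1$) has first two summands of denominator less than $p^l$ by hypothesis, so the total has denominator $p^l$ precisely when $t^jc+t^{-j}\bar c\not\equiv 0\pmod p$.

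The geometric location of the roots of $p$ now enters, and this is the step I expect to be the main obstacle. Because $p$ is irreducible palindromic of positive degree in $\L$, its zeros form a conjugate pair $w_0,\bar w_0\in S^1\setminus\{\pm 1\}$, and $\L/p\cong \C$ in a way that sends $t$ to $w_0$ and the involution on $\L$ to complex conjugation. Writing $\gamma\in\C^*$ for the image of $c$, the residue $t^jc+t^{-j}\bar c\pmod p$ equals $2\operatorname{Re}(w_0^j\gamma)$. If this vanished for both $j=0$ and $j=1$, then $\gamma$ and $w_0\gamma$ would both be purely imaginary, forcing $w_0=(w_0\gamma)/\gamma\in\R$, contradicting $w_0\in S^1\setminus\{\pm 1\}$. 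Hence for some $j\in\{0,1\}$ the element $v=v_0+t^jw$ satisfies $\l(v,v)=c'/p^l$ with $\gcd(c',p)=1$; this in turn forces $l(v)=l$, since otherwise $p^{l-1}v=0$ would give $p^{l-1}\l(v,v)=c'/p=0$ in $\Omega/\L$. This contradicts the standing hypothesis and produces the required $v$, completing the induction.
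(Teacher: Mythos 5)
Your argument is correct and follows the paper's proof in all essentials: induction on $s(H)$, a Gram--Schmidt orthogonalization step supplied by Lemma~\ref{lem:subsummand} and Proposition~\ref{prop:diagcycl}, and the key sub-claim of producing an element $v$ with $\l(v,v)$ of exact order $p^l$. The one place you diverge is in proving that sub-claim: the paper uses the splitting from Lemma~\ref{lem:subsummand} to choose the dual element $w$ with $\l(w,v_0)$ \emph{equal} to $p^{-l}$, after which $\l(v_0,w)+\l(w,v_0)=\overline{p^{-l}}+p^{-l}=2p^{-l}$ already has full order (since $2$ is a unit and $p$ is palindromic), so $v_0+w$ works with no case analysis. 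You instead allow an arbitrary numerator $c$ coprime to $p$, pass to the residue field $\L/p\cong\C$, and show that $v_0+t^jw$ works for $j=0$ or $j=1$. This is a valid, if slightly longer, route; note you could have sidestepped the residue-field argument entirely by replacing $w$ with $dw$ where $dc\equiv 1\bmod p^l$, reducing to $c=1$ and recovering the paper's one-line computation.
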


\begin{proof}
We will prove the lemma by induction on $s(H)$, i.e. on the the number $s$ of generators of $H$. We will use an algorithm, which is a version
of the Gram--Schmidt orthogonalization procedure from linear algebra.
If $s(H)=0$, then clearly there is nothing to prove. So suppose that the conclusion of the lemma holds whenever $s(H)<s$ for some $s>0$.
Now let   $\l\colon H\times H\to \Omega/\L$ be a  Blanchfield form over a $p$-primary module $H$ which  is generated by $s$ elements. We write $l=l(H)$.
Given an element $f$ in the $\L$-module $p^{-n}\L/\L$ we consider again
\[  l(f)=\min\{ k\in \N \, |\, p^kf=0 \in p^{-n}\L/\L\}.\]
We now have the following claim.

\begin{claim}
 There exists $v\in H$ with $l(\l(v,v))=l$.
\end{claim}

To prove the claim, we pick $v\in H$ with $l(v)=l$. It follows from Lemma \ref{lem:subsummand}
that  $v$ generates a subsummand of $H$, in particular we can find a $\L$-homomorphism
$\varphi:H\to p^{-l}\L/\L$ such that $\varphi(v)=p^{-l}\in p^{-l}\L/\L$.
Since $\l$ is non-singular we can find $w\in H$ with $\l(w,v)=p^{-l}\in p^{-l}\L/\L$.
If $l(\l(v,v))=l$ or if $l(\l(w,w))=l$, then we are done.
Otherwise we consider $\l(v+w,v+w)$ which equals
\[  \ba{rcl} \l(v+w,v+w)&=&\l(v,v)+\l(w,w)+\l(v,w)+\ol{\l(v,w)}\\
&=&\l(v,v)+\l(w,w)+p^{-l}+\ol{p}^{-l}\\
&=&\l(v,v)+\l(w,w)+2p^{-l}\in p^{-l}\L/\L.\ea \]
If $l(\l(v,v))<l$ and  if $l(\l(v,w))<l$, then one can now easily see that $l(\l(v+w,v+w))=l$,
i.e.  $v+w$ has the desired property.
This concludes the proof of the claim.

\smallskip
Given the claim, let us pick $v\in H$ with $l(\l(v,v))=l$. This means that  we can write $\l(v,v)=xp^{-l}\L/\L$ for some $x\in \L$ coprime to $p$.
We can in particular find $y\in \L$ such that $yx\equiv 1\bmod p^l$.

By Lemma \ref{lem:subsummand}
 we can find $v_1,\dots,v_{s-1}$ and $l_1,\dots,l_{s-1}$ such that
\[  H= v\L/p^l\L \oplus  \bigoplus_{i=1}^{s-1} v_i \L/p^{l_i}\L.\]
For $i=1,\dots,s-1$ we now define
\[ w_i:=v_i-y\l(v,v_i)v.\]
It follows immediately that $\l(v,w_i)=0\in \Omega/\L$. We thus see that $H$ splits as the orthogonal sum of the submodule generated by $v$ and the submodule generated by $w_1,\dots,w_{s-1}$.

By Proposition \ref{prop:diagcycl} the former is diagonalizable, and by our induction hypothesis the latter is also diagonalizable.
It follows that $\l$ is diagonalizable.

\end{proof}

We are now in a position to prove Proposition \ref{prop:diag}.

\begin{proof}[Proof of Proposition \ref{prop:diag}]
We say  that polynomials $p$ and $q$ in $\L=\realt$ are equivalent, written $p\doteq q\in \realt$,  if they differ by multiplication by a unit in $\realt$, i.e. by an element of the form $rt^i, r\ne 0\in \R$ and $i\in \Z$. Note that if $p\doteq q$, then also $p(t^{-1})\doteq q(t^{-1})$.

We denote by $\PP$ the set of equivalence classes of all non-constant irreducible elements in $\L$ which are not equivalent to $1+t$.
Let   $[p]\in \PP$  with $p(t^{-1})\doteq p(t)$. Since $p$ is irreducible and since we excluded $1+t$
it follows easily that
 $[p]$ is in fact represented by a  palindromic polynomial. We now say that $[p]\in \PP$ is palindromic if it contains a palindromic representative.

Note that $\PP$ inherits an involution $p\mapsto \ol{p}$ coming from the involution on $\L$.
We write $\PP'=\{ p\in \PP \, |\, p \mbox{ palindromic}\}$
and we
define $\PP'':=\{ \{p,\ol{p}\} \, |\, p \mbox{ not palindromic}\}$.
In our notation we will for the most part ignore the distinction between an element $p\in \L$ and the element it represents in $\PP, \PP'$ and $\PP''$.

Given $p\in \PP$ we denote by
\[ H_p:=\{ v\in H \, |\, p^iv=0\mbox{ for some $i\in \N$}\}\]
the $p$-primary part of $H$. Note that $H_{1+t}=0$ by our assumption on $H$.

\begin{claim}
Suppose that $p$ and $\ol{q}$ are non-equivalent irreducible polynomials in $\L$.  Then $\l(a,b)=0$ for any $a\in H_p$ and $b\in H_q$.
\end{claim}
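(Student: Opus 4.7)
The plan is to unpack what it means for $a$ to be $p$-primary and $b$ to be $q$-primary, push those annihilation conditions across $\lambda$ using the hermitian/sesquilinear properties, and then use coprimality of $\overline{p}$ and $q$ in the PID $\Lambda$ to force $\lambda(a,b)=0$.

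First I would choose $i,j\in\N$ with $p^i a = 0$ and $q^j b = 0$, and pick a representative $r\in\Omega$ of $\lambda(a,b)\in\Omega/\Lambda$. From right-linearity in the second variable, $r\cdot q^j\equiv\lambda(a,q^jb)=0$ in $\Omega/\Lambda$, so $r\cdot q^j\in\Lambda$. For the other side, the hermitian identity gives $\lambda(p^ia,b)=\overline{\lambda(b,p^ia)}=\overline{\lambda(b,a)p^i}=\overline{p}^i\,\lambda(a,b)$, so $\overline{p}^i\cdot r\in\Lambda$ as well.

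Now the hypothesis is that $p$ and $\overline{q}$ are non-equivalent irreducibles of $\Lambda$; applying the involution, $\overline{p}$ and $q$ are non-equivalent irreducibles as well, hence coprime in $\Lambda$. Therefore $\overline{p}^i$ and $q^j$ are coprime in $\Lambda$, and we can pick $\alpha,\beta\in\Lambda$ with $\alpha\overline{p}^i+\beta q^j=1$. Multiplying $r$ by this gives $r=\alpha(\overline{p}^ir)+\beta(rq^j)\in\Lambda$, so $\lambda(a,b)=0$ in $\Omega/\Lambda$ as required.

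There is no real obstacle here; the only subtlety is bookkeeping the sesquilinearity conventions so that $p^i$ on the left of the first slot translates to $\overline{p}^i$ acting on $\lambda(a,b)$, and then invoking that $\Lambda=\R[t^{\pm1}]$ is a PID (or just that coprime irreducibles generate the unit ideal) to conclude.
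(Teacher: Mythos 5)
Your proof is correct. The paper's own argument reaches the same conclusion by a mildly different route: instead of lifting $\l(a,b)$ to a representative $r\in\Omega$ and controlling its ``denominator'' from both sides, the paper uses the coprimality of $p$ and $\ol{q}$ to observe that multiplication by $\ol{q}$ is an \emph{automorphism} of the $p$-primary module $H_p$, with inverse given by multiplication by the Bezout cofactor $y$ (where $xp^l+y\ol{q}=1$). One then writes $a=\ol{q}a'$ with $a'\in H_p$, and computes $\l(a,b)=\l(\ol{q}a',b)=q\,\l(a',b)=\l(a',bq)$; iterating $j$ times with $q^jb=0$ gives $\l(a,b)=\l(a'',q^jb)=0$. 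Both versions ultimately rest on the same Bezout identity in the PID $\L$, but yours is somewhat more direct: it avoids the module-theoretic observation about the automorphism of $H_p$ and the implicit iteration, and instead does the bookkeeping once, entirely at the level of a representative $r\in\Omega$ of $\l(a,b)$. The key checkpoints you flag (conjugate-linearity in the first slot turning $p^i$ into $\ol{p}^i$, and coprimality of $\ol{p}^i$ with $q^j$) are exactly the ones that matter, and you handle them correctly.
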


Suppose that $p$ and $\ol{q}$ are not equivalent.
 Since $p$ and $\ol{q}$ are irreducible this means that they are coprime. We can thus find $x,y$ with $xp^l+y\ol{q}=1$, where $l=l(H_p)$.  Let $a\in H_p$ and $b\in H_q$.
Note that multiplication by $\ol{q}$ is an automorphism of $H_p$ with the inverse given by multiplication by $y$.
We can thus write $a=\ol{q}a'$ for some $a'\in H_p$.
We then conclude that
\[ \l(a,b)=\l(\ol{q}a',b)=\l(a,b){q}=\l(a,bq)=0.\]
This concludes the proof of the claim.

Note that by the classification of finitely generated modules over PIDs we get a unique direct sum decomposition
\[ H=\bigoplus\limits_{p\in \PP'} H_p \oplus \bigoplus\limits_{\{p,\ol{p}\}\in \PP''} (H_p\oplus H_{\ol{p}})\]
and it follows from the claim  that this is an orthogonal decomposition.
In particular,  $(H,\l)$ is diagonalizable if the restrictions to
  $H_p$ is diagonalizable for every $p\in \PP'$ and if the restriction of $\l$ to $H_p\oplus H_{\ol{p}}$ is diagonalizable
for every $\{p,\ol{p}\}\in \PP''$.

It follows from Lemma \ref{lem:palindromic} that given $p\in \PP'$ the
 restriction of $\l$ to $H_p$ is diagonalizable.
The following claim thus concludes the proof of the proposition.

\begin{claim}
Let $p\in \L$ be a  non-palindromic irreducible polynomial.
The restriction of $\l$ to $H_p\oplus H_{\ol{p}}$ is diagonalizable.
\end{claim}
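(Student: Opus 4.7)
The approach is first to reduce to a ``basic hyperbolic block'' of the form $U=\L/p^l\L\oplus \L/\ol p^l\L$ via an orthogonal splitting argument, then to diagonalize such a block by exhibiting a cyclic generator and a norm identity.

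Since $p$ is non-palindromic and irreducible, $p$ and $\ol p$ are coprime in $\L$, and the previous claim shows that $\l$ vanishes on $H_p\times H_p$ and on $H_{\ol p}\times H_{\ol p}$. Non-singularity of the restriction of $\l$ to $H_p\oplus H_{\ol p}$ then gives a perfect duality between $H_p$ and $H_{\ol p}$, so their invariant factor decompositions match: $H_p\cong \bigoplus_{i=1}^s \L/p^{n_i}\L$ and $H_{\ol p}\cong \bigoplus_{i=1}^s \L/\ol p^{n_i}\L$. I will induct on $s$, with the base case $s=0$ trivial. For the inductive step, set $l:=l(H_p)=l(H_{\ol p})$ and pick $v\in H_p$ with $l(v)=l$, so $v\L\cong \L/p^l\L$ is a direct summand of $H_p$ by Lemma~\ref{lem:subsummand}. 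Using non-degeneracy of the pairing between $H_p$ and $H_{\ol p}$, one finds $w\in H_{\ol p}$ with $\l(v,w)=\ol p^{-l}\in \Omega/\L$; a short check gives $l(w)=l$, so $w\L\cong \L/\ol p^l\L$ is a direct summand of $H_{\ol p}$. Setting $U:=v\L\oplus w\L$, a direct computation shows $\l|_U$ is non-singular, whence $H=U\oplus U^\perp$ orthogonally. The vanishing of $\l$ on each primary piece forces $U^\perp=(U^\perp\cap H_p)\oplus (U^\perp\cap H_{\ol p})$, and surjectivity of $\l(w,\cdot)\co H_p\to p^{-l}\L/\L$ yields $H_p=v\L\oplus (U^\perp\cap H_p)$, so $s(U^\perp\cap H_p)=s-1$. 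The inductive hypothesis then applies to $\l|_{U^\perp}$.

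It remains to diagonalize $\l|_U$. Since $p$ and $\ol p$ are coprime, the Chinese Remainder Theorem gives a $\L$-module isomorphism $U\cong \L/(p^l\ol p^l)\L$ in which $g:=v+w$ is a cyclic generator, and a direct calculation gives $\l(g,g)=y/(p^l\ol p^l)$ where $y:=p^l+\ol p^l\in \L$ is palindromic and coprime to $p^l\ol p^l$. To conclude that $\l|_U\cong \l((p^l\ol p^l))$, it suffices to find a unit $u\in \L/(p^l\ol p^l)\L$ with $u\ol u\cdot y\equiv 1\pmod{p^l\ol p^l}$; then the assignment $1\mapsto ug$ produces an isometry to the desired $1\times 1$ diagonal form. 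The key observation is that under the further CRT splitting $\L/(p^l\ol p^l)\L\cong \L/p^l\L\times \L/\ol p^l\L$, the ring involution interchanges the two factors via the natural isomorphism $a\mapsto \ol a\co\L/p^l\L\to \L/\ol p^l\L$. In this model both hermitian units (those fixed by the involution) and norms $u\ol u$ are exactly the pairs $(a,\ol a)$ with $a\in (\L/p^l\L)^*$, so every hermitian unit is automatically a norm. Applying this to $1/y$ produces the desired $u$.

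The main subtlety I anticipate lies in the involution calculation in the last step: carefully tracking how the involution on $\L$ descends through the CRT product $\L/p^l\L\times \L/\ol p^l\L$, and verifying the ``hermitian equals norm'' characterization for units. The orthogonal-splitting portion is essentially a Gram--Schmidt routine and should be straightforward by comparison.
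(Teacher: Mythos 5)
Your proposal is correct and follows the same overall strategy as the paper: use the vanishing of $\l$ on $H_p\times H_p$ and $H_{\ol p}\times H_{\ol p}$ together with non-singularity to set up a perfect pairing between $H_p$ and $H_{\ol p}$, split $H_p\oplus H_{\ol p}$ orthogonally into ``hyperbolic'' cyclic pieces of the form $\L/p^n\L\oplus\L/\ol p^n\L$, and then show each such piece is represented by the $1\times 1$ matrix $(p^n\ol p^n)$.

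The two places where you diverge from the paper's presentation are both cosmetic in substance but worth noting. First, the paper produces the orthogonal decomposition all at once: it fixes a cyclic decomposition $H_p=\oplus_i V_i$ and defines $\ol V_i\subset H_{\ol p}$ as the orthogonal complement of $\oplus_{j\ne i}V_j$, whereas you peel off one block $U=v\L\oplus w\L$ at a time by induction on the number of generators, in the spirit of Lemma~\ref{lem:palindromic}. Second, and more interestingly, your diagonalization of the cyclic block is done abstractly: you compute $\l(g,g)=(p^l+\ol p^l)/(p^l\ol p^l)$ for the naive cyclic generator $g=v+w$, then argue via the CRT splitting $\L/(p^l\ol p^l)\L\cong\L/p^l\L\times\L/\ol p^l\L$ that the involution swaps factors, hence every hermitian unit is a norm, and in particular $(p^l+\ol p^l)^{-1}=u\ol u$ for some unit $u$. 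The paper instead constructs the rescaling explicitly: it takes $u,v$ with $up^n+v\ol p^n=1$, sets $x:=\tfrac12(u+\ol v)$ so that $xp^n+\ol x\,\ol p^n=1$, and checks directly that $w=a\oplus xb$ satisfies $\l(w,w)=p^{-n}\ol p^{-n}$. In your CRT coordinates the paper's rescaling unit is precisely $(1,x)$, and the identity $xp^n+\ol x\,\ol p^n=1$ is just the explicit witness for ``$(p^n+\ol p^n)^{-1}$ is a norm.'' Your version isolates the conceptual reason the rescaling exists (hermitian units are norms in a ring where the involution swaps two coprime factors), which is arguably cleaner and would generalize; the paper's version is shorter and self-contained, avoiding the need to verify the hermitian-equals-norm statement. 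Both are valid, and your argument, including the ``$l(w)=l$'' check and the claim $s(U^\perp\cap H_p)=s-1$ via the surjectivity of $\l(w,\cdot)$, goes through.
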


First
note that by the first claim of the proof we have $\l(H_p,H_p)=0$ and $\l(H_{\ol{p}},H_{\ol{p}})=0$.
Since $\L$ is a PID we can write $H_p=\oplus_{i=1}^r V_i$ where the $V_i$ are cyclic $\L$-modules.
We then define  $\ol{V}_i$ to be the orthogonal complement in $H_{\ol{p}}$ to $\oplus_{i\ne j}V_j$, i.e.
\[ \ol{V}_i:=\{ w\in H_{\ol{p}}\, |\, \l(w,v)=0\mbox{ for any $v\in \oplus_{i\ne j}V_j$}\}.\]
Since $\l$ is non-singular it follows easily that $H_{\ol{p}}=\oplus_{i=1}^r \ol{V}_i$.
In fact  the decomposition
\[ H_p\oplus H_{\ol{p}} \cong \bigoplus\limits_{i=1}^r (V_i\oplus \ol{V}_i)\]
is an orthogonal decomposition into the $r$ subsummands $V_i\oplus \ol{V}_i$.

It now suffices to show that the restriction of $\l$ to any $V_i\oplus \ol{V}_i$ is diagonalizable.
So let $i\in \{1,\dots,r\}$. Note that $V_i\cong \L/p^n\L$ for some $n$. Let $a$ be a generator of the cyclic $\L$-module $V_i$.
Since $\l$ is non-singular there exists $b\in \ol{V}_i$ such that $\l(a,b)=\ol{p}^{-n}\in \ol{p}^{-n}\L/\L$. Note that $b$ is necessarily a generator of the cyclic $\L$-module $\ol{V}_i$.

Since $p^n$ and $\ol{p}^n$ are coprime we can find $u,v\in \L$ such that $up^n+v\ol{p}^n=1$.
We write $x:=\frac{1}{2}(u+\ol{v})$. Then one can easily verify that $xp^n+\ol{x}\,\ol{p}^n=1$.
Note that it follows in particular that $x$ is coprime to $\ol{p}$.

We now write $w:=a\oplus xb\in V_i\oplus \ol{V}_i$. It is straightforward to see that $\ol{p}^nw$ generates $V_i$
and $p^nw$ generates $\ol{V}_i$, in particular $w$ generates $V_i\oplus \ol{V}_i$.
Furthermore,
\[ \ba{rcl} \l(w,w)&=&\l(a,xb)+\l(xb,a)\\
&=&\l(a,xb)+\ol{\l(a,xb)}\\
&=&x\ol{p}^{-n}+\ol{x}p^{-n}\\
&=&(xp^n+\ol{x}\ol{p}^n)p^{-n}\ol{p}^{-n}\\
&=&p^{-n}\ol{p}^{-n}.\ea \]
This shows that sending $1$ to $w$ defines an isometry from
$\l(p^n\ol{p}^n)$  (i.e. the Blanchfield form defined by the $1\times 1$-matrix $(p^n\ol{p}^n)$) to  the restriction of $\l$ to $V_i\oplus \ol{V}_i$.
\end{proof}

Using Proposition \ref{prop:diag} we can now also prove the following result.

\begin{proposition}
\label{prop:maxoncircle}
Let $\l$ be a  Blanchfield form over $\L$. Let $B=B(t)$ be a hermitian matrix over $\L$ representing $\l$.
Denote by $Z\in S^1_+$ the set of zeros of $\det(B(t))\in \L$.
Then
\[ \mu(\l)=\frac{1}{2}\left(\max\{ \eta_B(z)+\s_B(z) \, |\, z\in Z\} +\max\{ \eta_B(z)-\s_B(z) \, |\, z\in Z\}\right).\]
\end{proposition}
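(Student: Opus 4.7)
My plan is to prove the proposition as a direct consequence of the continuity of eigenvalues of Hermitian matrices, without needing Proposition \ref{prop:diag}. Write $g_\pm(z):=\eta_B(z)\pm\s_B(z)$; the goal is to show $\max_{z\in S^1}g_\pm(z)=\max_{z\in Z}g_\pm(z)$. The first step is a symmetry reduction: since $B$ is hermitian over $\L$ one has $B(t)^t=B(t^{-1})$, and for $z\in S^1$ this gives $B(z)^t=B(\bar z)=\ol{B(z)}$, so $B(z)$ is a complex Hermitian matrix and both $\eta_B$ and $\sign\circ B$ are invariant under complex conjugation. Thus the maxima over $S^1$ in the definition of $\mu(\l)$ coincide with the maxima over $S^1_+$. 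The hypothesis that multiplication by $t\pm 1$ is an automorphism of $H$ ensures $\det B(\pm 1)\ne 0$, so neither endpoint $\pm 1$ of the arc $S^1_+$ belongs to $Z$.

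The heart of the proof is a local analysis at each $z_0\in S^1_+$ using eigenvalue continuity of the family $z\mapsto B(z)$. If $z_0\notin Z$, then $\eta_B(z_0)=0$ and $\sign B$ is locally constant. If $z_0\in Z$ with $k:=\eta_B(z_0)\ge 1$, then on a punctured neighbourhood of $z_0$ meeting $Z$ only at $z_0$, the $k$ eigenvalues of $B(z)$ that vanish at $z_0$ perturb to $a$ strictly positive and $b=k-a$ strictly negative ones (where $a,b$ may depend on the side of $z_0$), while the remaining eigenvalues stay away from $0$. Hence $\eta_B(z)=0$ and $\sign B(z)=\sign B(z_0)+(a-b)$ on this punctured neighbourhood, giving the key inequalities
\[ g_+(z_0)-g_+(z)=k-(a-b)=2b\ge 0 \quad\text{and}\quad g_-(z_0)-g_-(z)=k+(a-b)=2a\ge 0. \]

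To conclude, observe that $S^1_+\sm Z$ is a finite disjoint union of open arcs on each of which $g_\pm$ is constant, and the closure of every such arc meets $Z\cup\{1,-1\}$. If an arc has an endpoint $z_0\in Z$, the previous estimate shows that its constant value of $g_\pm$ is at most $g_\pm(z_0)\le\max_Z g_\pm$. The only alternative is $Z=\emptyset$, in which case $B(z)$ is invertible throughout $S^1$, so $\eta_B\equiv 0$ and $\sign B\equiv\sign B(1)$ by continuity; then $g_\pm\equiv 0$ and $\mu(\l)=0$ matches the right-hand side under the convention $\max\emptyset=0$. Altogether $\max_{S^1_+}g_\pm=\max_Z g_\pm$, proving the proposition. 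The only technical point is the eigenvalue-perturbation bookkeeping at $z_0\in Z$; this is entirely standard once continuity of eigenvalues of the continuous Hermitian family $z\mapsto B(z)$ is invoked.
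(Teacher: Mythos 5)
Your proof is correct, and it takes a genuinely different route from the paper's. The paper first invokes Proposition \ref{prop:diag} (the diagonalization of Blanchfield forms over $\realt$, a substantial structural result established in Section~\ref{section:diag}) to replace $B$ by a diagonal matrix $D=\diag(d_1,\dots,d_r)$, then writes $\Theta^\pm_D=\sum_i \Theta^\pm_{d_i}$ and analyzes the scalar functions $\Theta^\pm_{d_i}$, each of which is constant off the zeros of $d_i$ with relative maxima at those zeros. Your argument instead works directly with the $n\times n$ Hermitian family $z\mapsto B(z)$ and appeals only to continuity of eigenvalues: at $z_0\in Z$ the $k$ vanishing eigenvalues split off-$z_0$ into $a$ positive and $b=k-a$ negative ones, giving $g_+(z_0)-g_+(z)=2b\ge 0$ and $g_-(z_0)-g_-(z)=2a\ge 0$, so $g_\pm$ has a relative maximum at every point of $Z$ and is locally constant off $Z$. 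What your approach buys is independence from the diagonalization machinery: it is self-contained modulo a classical fact and does not presuppose Proposition \ref{prop:diag}. What the paper's approach buys is brevity in context, since the diagonalization is needed elsewhere anyway and, once available, reduces the local analysis to scalar polynomials. Both correctly use the implicit hypothesis that multiplication by $t\pm1$ is an isomorphism (so $\pm1\notin Z$), and your treatment of the degenerate case $Z=\emptyset$ via $\max\emptyset=0$ is the right reading of the statement.
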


\begin{proof}
By Proposition \ref{prop:diag} there exists a hermitian diagonal matrix $D=\diag(d_1,\dots,d_r)$
over $\L$ with $\l(D)\cong \l(B)$. Recall that $\det(D)\doteq \det(B)$ and $\eta_B(z)=\eta_\l(z)=\eta_D(z)$, $\s_B(z)=\s_\l(z)=\s_D(z)$ for any $z\in S^1$. It thus suffices to prove the claim for $D$.

Given a hermitian matrix $C$ we  write
\[ \Theta_C^{\pm}(z):=\eta_C(z)\pm \s_C(z).\]
Note that
\[ \Theta_D^{\pm}(z)=\sum_{i=1}^r\Theta_{d_i}^\pm(z).\]
It is straightforward to see that for any $i$ the function $\Theta_{d_i}^\pm(z)$ is constant away from the zeros of $d_i$ and that the values at a zero are relative maxima. The proposition now follows immediately.
\end{proof}

\subsection{Elementary diagonal forms}\label{section:basicdiagonal}

We say that a matrix is \emph{elementary diagonal} if it is of the form
\[E=\diag(e_1,\dots,e_M),\]
where for $k=1,\dots,M$ we have $e_k=\varepsilon_kB_{\xi_k}^{n_k}$ for some $\varepsilon_k\in \{-1,1\}$, $n_k\in \N$ and $\xi_k\in \Xi$.

\begin{lemma}~\label{lem:splitting}
Let $D$ be a hermitian matrix over $\L$  such that $\det(D(\pm 1))$ is non-zero. Then
there exists an elementary diagonal matrix $E$ such that $\l(D)\cong \l(E)$.
\end{lemma}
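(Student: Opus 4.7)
The plan is to proceed in three stages: diagonalize the form, factor each diagonal entry into elementary palindromic pieces, and then split each one-dimensional form into a direct sum of $1\times 1$ elementary pieces using Theorem~\ref{thm:glue}. First I would apply Proposition~\ref{prop:diag} to $\l(D)$; the hypothesis $\det D(\pm 1)\ne 0$ guarantees that multiplication by $t\pm 1$ is an isomorphism of $\L^n/D\L^n$, so the proposition furnishes a diagonal hermitian matrix $D'=\diag(d_1,\dots,d_r)$ with $\l(D)\cong\l(D')$ and each $d_i$ palindromic. Second, by property~(4) of Section~\ref{section:basicspols}, each $d_i$ factors as $d_i=c_i\prod_{j=1}^{m_i}B_{\xi_{i,j}}^{n_{i,j}}$ with $c_i\in\R^{\times}$ and pairwise distinct $\xi_{i,j}\in\Xi\setminus\{\pm 1\}$ (the exclusion of $\pm 1$ coming from $d_i(\pm 1)\ne 0$). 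Conjugating the $1\times 1$ block $(d_i)$ by the unit $p=\sqrt{|c_i|}\in\L$, which is allowed by Proposition~\ref{prop:ra81} since $p\bar p=|c_i|$, reduces us to the case $c_i=\vare_i\in\{-1,1\}$.

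It therefore suffices to prove the following claim: for every $\vare\in\{-1,1\}$, every collection of pairwise distinct $\xi_j\in\Xi\setminus\{\pm 1\}$, and every $n_j\in\N$, there exist signs $\vare_j\in\{-1,1\}$ with
\[ \l\Big(\vare\prod_{j=1}^m B_{\xi_j}^{n_j}\Big)\cong\bigoplus_{j=1}^m\l(\vare_j B_{\xi_j}^{n_j}). \]
I would establish this by induction on $m$, the base case $m=1$ being trivial. For the inductive step, I peel off a single factor: if any $\xi_j$ lies on $S^1$, let $\xi_1$ be the one closest to $1$ on $S^1_+$; otherwise let $\xi_1$ be arbitrary. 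Write $F=B_{\xi_1}^{n_1}$ and $H=\prod_{j\ge 2}B_{\xi_j}^{n_j}$; these are coprime palindromic polynomials in $\L$. To apply Theorem~\ref{thm:glue} to a pair $(\vare_1 F,\vare_2 H)$ with $\vare'\vare_1\vare_2=\vare$, it suffices to exhibit $a,b\in\{-1,1\}$ such that for every $z\in S^1_+$ either $aF(z)>0$ or $bH(z)>0$; the splitting $\l(\vare FH)\cong\l(\vare_1 F)\oplus\l(\vare_2 H)$ then follows from Theorem~\ref{thm:glue} after a sign bookkeeping, and the induction hypothesis applied to $\l(\vare_2 H)$ handles the remaining factors.

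The main obstacle is the existence of such $(a,b)$: a priori all four sign combinations $(\pm,\pm)$ of $(F,H)$ could occur on the intervals of $S^1_+$ determined by the zeros of $FH$, in which case no choice of $(a,b)$ avoids the forbidden pattern $(-a,-b)$. The minimal choice of $\xi_1$ is designed precisely to exclude one pattern: by minimality, $H$ has no zeros on $[1,\xi_1)$, and since each $B_{\xi_j}(1)>0$ we have $H(1)>0$, so $H$ is strictly positive throughout $[1,\xi_1)$. If $|\xi_1|<1$ or $n_1$ is even, then $F\ge 0$ on $S^1$, and the choice $(a,b)=(1,1)$ works (with $F>0$ off the zero set of $F$, and $bH(\xi_1)>0$ compensating at $z=\xi_1$). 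If $|\xi_1|=1$ and $n_1$ is odd, then $F>0$ on $[1,\xi_1)$ and $F<0$ on $(\xi_1,-1]$, so the pattern $(+,-)$ is excluded and $(a,b)=(-1,1)$ works: at $z=\xi_1$ we have $bH(\xi_1)>0$; on $(\xi_1,-1]\setminus Z(H)$, $aF(z)>0$; and at zeros of $H$ on $(\xi_1,-1]$, $aF(z)=-F(z)>0$ by coprimality. In both cases Theorem~\ref{thm:glue} yields the required one-step decomposition, and the induction closes.
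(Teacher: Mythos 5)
Your proposal is correct and follows essentially the same strategy as the paper's proof: apply Proposition~\ref{prop:diag} to diagonalize, factor each diagonal entry into elementary palindromic pieces, and repeatedly apply Theorem~\ref{thm:glue} to split off one factor, exploiting minimality of the chosen on-circle zero to verify the sign hypothesis. The only difference is organizational: the paper first peels off all $|\xi|<1$ factors at once (for which the sign condition is automatic since $B_\xi>0$ on $S^1$), and then handles each minimal on-circle factor with a single $(-1)^{n}$ sign adjustment rather than your even/odd case split, but the core inductive peeling via the Decomposition Theorem is identical.
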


\begin{proof}
Let $D$ be a hermitian matrix over $\L$  such that $\det(D(\pm 1))\ne 0$.
By Proposition~\ref{prop:diag} we can without loss of generality  assume that $D$ is a diagonal $n\times n$-matrix.

We will use an inductive argument. We denote by $d_k$ the  $k$-th entry on the diagonal of $D$.  Since $d_k$ is a real
polynomial and since $\ol{D}=D$, we have a unique decomposition
\[d_k(t)=\varepsilon_kc_k\prod_{\xi} B_\xi(t)^{n_{k,\xi}},\]
with $\varepsilon_k\in \{-1,1\}, c_k\in \R_{>0}$ and
where $\xi$ runs over all elements $\Xi$,
and where $n_{k,\xi}$ is zero for all but finitely many $\xi$.

We write $C=\diag(\sqrt{c_1},\dots,\sqrt{c_n})$. After replacing $D$ by $C^{-1}D(C^{-1})^t$ we can assume that $c_i=1$ for all $i$.

 Assume now that there exists a $\xi$ with
$|\xi|<1$ such that $n_{k,\xi}>0$. Let us define
\[A(t)= B_\xi(t)^{n_{k,\xi}}\mbox{ and }  B(t)=\varepsilon_k\frac{d_{k}(t)}{A(t)}.\]
Note that $A(1)>0$. Since $A(t)$ has no zeros on $S^1$ we have in fact that $A(z)>0$ for any $z\in S^1$.
We can therefore use Theorem~\ref{th:glue} to show that the matrix
$\diag(d_1,d_2,\dots,d_k,\dots,d_n)$ is congruent to $\diag(d_1,\dots,d_{k-1},A,B,d_{k+1},\dots,d_n)$. In this way
we can split off all terms with $|\xi|<1$.

It remains to consider the case when $d_k(t)=\varepsilon_k\prod_{\xi\in S^1_+}B_\xi(t)^{n_{k,\xi}}$.
Let $\xi\in S^1_+$ be the minimal number in $S^1_+$ with $n_{k,\xi}>0$. We now define
\[A(t)= \varepsilon_k B_{\xi_1}(t)^{n_{k,\xi}}\mbox{ and } B(t)=\varepsilon_k(-1)^{n_{k,\xi}}
\prod_{\xi'\ne \xi} B_{\xi'}(t)^{n_{k,\xi'}}.\]
Note that for $z\in S^1_+$ with $z>\xi$ we have $\sign(A(z))=\varepsilon_k(-1)^{n_{k,\xi}}$
and for $z\in S^1_+$ with $z\leq \xi$ we have $\sign(B(z))=\varepsilon_k(-1)^{n_{k,\xi}}$.
It thus follows from Theorem~\ref{th:glue} that the matrices $(d_k)$ and $\bp A&0\\0&B\ep$ give
rise to the same Blanchfield form.
The lemma now follows from a straightforward induction argument.
\end{proof}

\subsection{Conclusion of the proof of Theorem \ref{main}}\label{section:conclusion}

It follows from the discussion in Sections \ref{section:class} and \ref{section:statementmainthm}
that the following theorem is equivalent to Theorem \ref{main}.

\begin{theorem}\label{th:realequal}
Let $B$ be a hermitian  matrix over $\L$ such that $\det(B(\pm 1))\ne 0$.
Then
\[ \nr(\l(B))=\max\{\mu(B),\eta(B)\}.\]
\end{theorem}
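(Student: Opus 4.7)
The easy inequality $n_\R(\l(B))\geq\max\{\mu(B),\eta(B)\}$ follows from Lemma~\ref{lem:easy} applied to $\l=\l(B)$: the hypothesis $\det(B(\pm 1))\ne 0$ ensures that multiplication by $t\pm 1$ is an isomorphism of the underlying module, and by Corollary~\ref{cor:ltdefined} we have $\mu(B)=\mu(\l(B))$ and $\eta(B)=\eta(\l(B))$. The substance of the theorem is therefore the reverse inequality $n_\R(\l(B))\leq N$, where we set $N:=\max\{\mu(B),\eta(B)\}$.

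The plan is to construct a hermitian diagonal matrix of size $N$ representing $\l(B)$, by starting from an elementary diagonal form and reducing its size through iterated use of the Decomposition Theorem. First, by Lemma~\ref{lem:splitting}, $\l(B)$ is represented by an elementary diagonal matrix $E=\diag(\vare_1B_{\xi_1}^{n_1},\dots,\vare_MB_{\xi_M}^{n_M})$, and since $\mu$ and $\eta$ depend only on the Blanchfield form we have $\mu(E)=\mu(B)$ and $\eta(E)=\eta(B)$. Next, using the Remark following Theorem~\ref{th:glue}, which asserts $\l(B_\xi^n)\cong\l(-B_\xi^n)$ whenever $|\xi|<1$, I would normalize so that $\vare_k=+1$ for every off-circle entry. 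The goal is now to reduce $M$ down to $N$: at each step, as long as $M>N$, I look for two entries $e_i,e_j$ with $\xi_i\ne\xi_j$ and a sign $\vare\in\{\pm 1\}$ satisfying $\vare e_i(z)>0$ or $\vare e_j(z)>0$ for every $z\in S^1_+$, and then Theorem~\ref{th:glue} replaces them by the single combined entry $\vare e_ie_j$.

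The heart of the proof is the combinatorial claim that when $M>N$ such a combinable pair always exists. If any entry $e_k$ is off-circle, then after the sign normalization it is strictly positive on all of $S^1$, so it combines with every other entry of distinct $\xi$-value via $\vare=+1$; and the bound $M>\eta(B)\geq\max_\xi\#\{k\colon\xi_k=\xi\}$ guarantees that such a partner of distinct $\xi$ exists. If all entries are on-circle, one orders the $\xi_k$'s cyclically along $S^1_+$ and performs a case analysis on the parities of $n_i,n_j$ and the signs $\vare_i,\vare_j$ of neighbouring entries. The inequality $M>\eta(B)$ again provides neighbours with distinct $\xi$-values, while $M>\mu(B)$ is what rules out the adversarial alternating sign configurations where every direct combination would fail.

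The main obstacle is precisely this last combinatorial step. Theorem~\ref{th:glue} genuinely fails when, at some $z\in S^1_+$, both entries are non-positive (for $\vare=+1$) or both non-negative (for $\vare=-1$); this can happen for two on-circle entries with opposite signs and odd multiplicities, so simple one-by-one pairing is not always available. The role of the invariant $\mu(B)$ is to control the oscillation of the signature function $z\mapsto\sign(E(z))$ on $S^1_+$, and I expect the argument to track the signature jumps of $E$ at each on-circle $\xi_k$ and show that an $E$ with $M>N$ entries must contain some pair whose signs and parities permit the Decomposition Theorem to be applied. Once the existence of a combinable pair is established, a straightforward induction on $M$ produces a diagonal representing matrix of size exactly $N$, completing the proof.
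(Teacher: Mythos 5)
You correctly handle the easy inequality and correctly identify the machinery for the hard direction (Lemma~\ref{lem:splitting} to reach an elementary diagonal form, then Theorem~\ref{th:glue} to merge entries). However, the load-bearing step --- that whenever the elementary diagonal form has $M > N$ entries one can \emph{always} locate a pair combinable by Theorem~\ref{th:glue} --- is not actually proved; you say yourself that you ``expect the argument to track the signature jumps.'' This is precisely where the substance of the theorem lives, so as written the proof has a genuine gap. Moreover a free-form ``find any combinable pair, merge, repeat'' strategy is not obviously well-founded: two on-circle entries of opposite sign can each fail to pair with every other entry, and it is not a priori clear that a greedy search makes monotone progress toward size exactly $N$ rather than stalling.

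The paper avoids the global greedy argument by structuring the reduction around two disjoint special cases. Proposition~\ref{prop:notoncircle} handles the off-circle part, where each entry has constant sign on $S^1$ and pairing is unconditional, giving $\eta(B)$ blocks. Proposition~\ref{prop:oncircle} handles the on-circle part and is the genuinely delicate step; its proof does not choose arbitrary pairs. It orders $\xi_1 \le \dots \le \xi_M$ on $S^1_+$, inducts by peeling off the \emph{largest} $\xi_M$, and carries an entire partition $\{1,\dots,M-1\}=I'_1\cup\dots\cup I'_{s'}$ with signs, asking whether $e_M$ can be absorbed into an existing block $\rho_a$ (Case~1, where one verifies via Proposition~\ref{prop:maxoncircle} that $\mu$ does not change) or must start its own block (Case~2, where $\mu$ is shown to increase by exactly one by evaluating $\eta_R + \vare\s_R$ at $\xi_M$). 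The monotone $\xi$-ordering is essential: for $z\ge\xi_M$ each $\rho_a$ has no zeros, hence constant sign, and this is what makes the sign test at $\xi_M$ alone decisive. Only afterwards are the $\mu$ on-circle blocks paired with the $\eta$ off-circle blocks, which is immediate because the latter have constant sign on $S^1$. To make your outline into a proof you would essentially have to reproduce this ordered-induction argument of Proposition~\ref{prop:oncircle}.
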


We will first prove  two   special cases of Theorem~\ref{th:realequal}.

\begin{proposition}\label{prop:oncircle}
Let $B$ be a hermitian matrix over $\L$ such that all zeros of $\det(B)\in \L$ lie on $S^1\sm \{\pm 1\}$.
Then $n_\R(B)=\mu(B)$.
\end{proposition}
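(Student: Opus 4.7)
The lower bound $n_\R(B) \ge \mu(B)$ is immediate from Lemma \ref{lem:easy} applied to $\l(B)$ (the hypothesis $\det(B(\pm 1)) \ne 0$ follows from our hypothesis on the zeros of $\det B$). Under that hypothesis every zero of $\det B$ lies on $S^1$, so $\eta(B)$ is attained at some $z_0 \in S^1$, and then
\[ 2\eta(B) = (\eta_B + \s_B)(z_0) + (\eta_B - \s_B)(z_0) \le 2\mu(B) \]
forces $\eta(B) \le \mu(B)$. Hence $\max\{\mu(B),\eta(B)\} = \mu(B)$, and it remains only to produce a hermitian matrix over $\L$ of size $\mu(B)$ representing $\l(B)$.

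The first step is to apply Lemma \ref{lem:splitting} to replace $B$ by an elementary diagonal matrix $E = \diag(\eps_1 B_{\xi_1}^{n_1}, \dots, \eps_M B_{\xi_M}^{n_M})$ with $\l(E) \cong \l(B)$. Our hypothesis on $\det B$ forces every $\xi_k$ into $S^1 \sm \{\pm 1\}$; after replacing some $\xi_k$ by $\ol{\xi_k}$, we may further assume $\xi_k \in S^1_+ \sm \{1\}$. The rest of the proof pares $E$ down to size $\mu(B)$ by iterated applications of the Decomposition Theorem \ref{th:glue}.

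The engine of the reduction is the numerical identity
\[ \mu(D) = M - \min_{z\in S^1} p_-(z) - \min_{z\in S^1} p_+(z) \]
valid for any hermitian diagonal $D$ of size $M$ with $\l(D) \cong \l(B)$, where $p_\pm(z)$ count the positive and negative eigenvalues of $D(z)$; it follows at once from $\eta_D(z) \pm \s_D(z) = M - 2p_\mp(z) \mp \sign D(1)$. So if the current diagonal $D$ has size $M > \mu(B)$, then $\min_z p_-(z) \ge 1$ or $\min_z p_+(z) \ge 1$, and by swapping overall signs we may assume the former. The plan is to find two coprime diagonal entries $d_i, d_j$ such that $S_i \cup S_j = S^1_+$, where $S_k := \{z \in S^1_+ : d_k(z) < 0\}$, and then invoke Theorem \ref{th:glue} with $\eps = -1$ to replace the two entries by the single entry $-d_i d_j$. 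A direct point-by-point check shows that post-merge $p_-'(z) = p_-(z) - 1$ and $p_+'(z) = p_+(z)$, so $\mu$ is preserved while the size decreases by one. Iterating drains $\min p_-$ to zero; an analogous sequence of positive merges then drains $\min p_+$ to zero; at that point $M = \mu(B)$ and we are done.

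The delicate point---and the main obstacle---is producing a mergeable pair at every stage. For the initial elementary $E$, the set $S_i$ attached to $\eps_i B_{\xi_i}^{n_i}$ is of a very restricted shape: either empty, all of $S^1_+ \sm \{\xi_i\}$, a half-interval $[1, \xi_i)$, or $(\xi_i, -1]$. A case analysis, indexed by the argmin of $p_-$ on $S^1_+$ and by the sign/parity pattern of the entries, then produces two such sets whose union is all of $S^1_+$ (and whose underlying polynomials are coprime, since the $\xi$'s differ). After a merge, the new entry's negativity set is a finite union of arcs rather than a single half-interval, so the argument must be adapted at each subsequent step---either by sequencing the merges so that the remaining entries retain a simple shape, or by a more general covering argument which uses the persistent hypothesis $\min p_- \ge 1$. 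Orchestrating this bookkeeping across the full sequence of merges is where the genuine combinatorial work of the proof resides.
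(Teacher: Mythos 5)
Your strategy is in the same spirit as the paper's---both reduce to an elementary diagonal matrix via Lemma~\ref{lem:splitting} and then repeatedly invoke the Decomposition Theorem~\ref{th:glue} to merge diagonal entries until only $\mu$ remain---and your preliminary calculations are correct: the identity $\mu(D)=M-\min_z p_-(z)-\min_z p_+(z)$ for a diagonal $D$ of size $M$, and the observation that merging a coprime pair with $S_i\cup S_j=S^1_+$ (with $\vare=-1$) decreases $\min_z p_-$ by exactly one while leaving $p_+$ pointwise unchanged.

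However, the step you defer---\emph{finding} a mergeable coprime pair at every stage---is a genuine gap, not bookkeeping, and your proposed fallback (``a more general covering argument which uses the persistent hypothesis $\min p_- \ge 1$'') is false. Take three distinct $\xi_1,\xi_2,\xi_3 \in S^1_+\sm\{\pm 1\}$ and $E=\diag(-B_{\xi_1},-B_{\xi_1},-B_{\xi_2},-B_{\xi_2},-B_{\xi_3},-B_{\xi_3})$; one checks $\mu(E)=2$. Greedily merging $(-B_{\xi_1},-B_{\xi_2})$, then $(-B_{\xi_2},-B_{\xi_3})$, then $(-B_{\xi_1},-B_{\xi_3})$ (each a legal merge) lands at $D=\diag(-B_{\xi_1}B_{\xi_2},-B_{\xi_2}B_{\xi_3},-B_{\xi_1}B_{\xi_3})$ of size $3>\mu$, which still has $\min_z p_-(z)=1$ but admits no further merge: every pair of entries shares a common root (so is not coprime), and no two negativity sets cover $S^1_+$. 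So the \emph{order} of merges genuinely matters and must be controlled. The paper controls it by sorting $\xi_1\le\dots\le\xi_M$ on $S^1_+$ and inducting on $M$: given the optimal decomposition of $\diag(e_1,\dots,e_{M-1})$ into blocks $\rho_1,\dots,\rho_{s'}$, the new entry $e_M=\varepsilon B_{\xi_M}^{n_M}$ is merged into any block $\rho_a$ with $\rho_a(\xi_M)\ne 0$ and $\sign\rho_a(\xi_M)=\varepsilon$ (the merge condition in Theorem~\ref{th:glue} holds because $e_M$ has sign $\varepsilon$ on $[1,\xi_M)$ while $\rho_a$, having all its roots $<\xi_M$, has constant sign $\varepsilon$ on $[\xi_M,-1]$), and otherwise it becomes its own new block; a short computation with $\Theta^\pm_C(z)=\eta_C(z)\pm\s_C(z)$ then shows $s=s'$ in the first case and $s=s'+1$ in the second. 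To close your proof you would need to build and verify a merge schedule of comparable precision.
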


Note that if  $\det(B)\in \L$ has no zero outside of the unit circle then it can also be seen directly
that $\eta(B)\leq \mu(B)$.

\begin{proof}
By  Lemma~\ref{lem:splitting} it suffices to prove the proposition for an elementary diagonal matrix
of the form $E=\diag(e_1,\dots,e_M)$, where for $k=1,\dots,M$ we have $e_k=\varepsilon_kB_{\xi_k}^{n_k}$ for some $\varepsilon_k\in \{-1,1\}$, $n_k\in \N$ and $\xi_k\in \Xi\cap S^1=S^1_+$. By Lemma \ref{lem:easy} it remains to show that $\mu(B)\geq n_\R(B)$.
This will be achieved by proving the following claim.

\begin{claim}
Let $E=\diag(e_1,\dots,e_M)$ be such an elementary diagonal matrix. We write $s=\mu(E)$.
Then there exists a  decomposition
\[\{1,\dots,M\}=\bigcup_{a=1}^{s}I_a\]
into pairwise disjoint sets, and for each $a=1,\dots,s$ there exists  $\kappa_a\in \{-1,1\}$ such that
\[\l(E)\cong\lambda\big(\diag\big(\kappa_1 \prod_{i\in I_1} e_i, \, \dots,\,  \kappa_s \prod_{i\in I_{s}} e_i\big)\big).\]
\end{claim}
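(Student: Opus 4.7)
\emph{Plan.} The goal is to produce the claimed partition by a greedy sweep and to identify the resulting number of groups with $\mu(E)$.

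First I would record a pairwise merge criterion. For two entries $e_i, e_j$ with $\xi_i<\xi_j$ in $S^1_+$, introduce $L_k:=\varepsilon_k$ and $R_k:=\varepsilon_k(-1)^{n_k}$, the signs of $e_k$ on $(1,\xi_k)$ and $(\xi_k,-1)$ respectively. The hypothesis of Theorem~\ref{th:glue} demands a sign $\varepsilon\in\{\pm1\}$ with $\varepsilon e_j(\xi_i)>0$ and $\varepsilon e_i(\xi_j)>0$, i.e.\ $\varepsilon L_j>0$ and $\varepsilon R_i>0$, forcing $L_j=R_i$ (and one checks that the sign condition at all other points of $S^1_+$ is then automatic); the resulting single entry is $L_j\cdot e_ie_j$. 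Iterating, any ordered chain $i_1<\ldots<i_r$ with pairwise distinct $\xi_{i_l}$ and $L_{i_{l+1}}=R_{i_l}$ at every step collapses, by $r-1$ applications of Theorem~\ref{th:glue}, to a single entry $\bigl(\prod_{l=2}^r L_{i_l}\bigr) e_{i_1}\cdots e_{i_r}$.

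I then produce such chains greedily. Maintain two pools $P_+, P_-$ of open chains, classified by the $R$-value of the chain's current last entry. Process indices in $\xi$-order, batching indices that share a common $\xi$: for each index $k$, if $P_{L_k}$ is non-empty, take a chain out of it, append $k$, and return the chain to $P_{R_k}$; otherwise open a new chain, put $k$ in it, and place it in $P_{R_k}$. Distinct indices at a common $\xi$ go to distinct chains, so each chain has pairwise distinct $\xi$-values (ensuring coprimality of consecutive entries) and satisfies $L_{i_{l+1}}=R_{i_l}$ by construction, hence falls under the merge criterion above.

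The key calculation identifies the total number of chains with $\mu(E)$. Writing $T^\pm$ for the total number of chains ever opened in $P_\pm$, $a_\pm(\xi^-)$ for the size of $P_\pm$ just before the events at $\xi$, $m^\pm(\xi):=\#\{k:\xi_k=\xi,\,L_k=\pm\}$, and $n^\pm(z):=\#\{k:\xi_k<z,\,L_k=\pm,\,R_k=\mp\}$, the bookkeeping identity $a_+(\xi^-)=T^+(\xi^-)+n^-(\xi^-)-n^+(\xi^-)$ together with the greedy rule $T^+(\xi^+)-T^+(\xi^-)=\max(0,m^+(\xi)-a_+(\xi^-))$ yields the recurrence
\[
T^+(\xi^+)=\max\bigl(T^+(\xi^-),\ m^+(\xi)+n^+(\xi^-)-n^-(\xi^-)\bigr),
\]
whence $T^+=\max_\xi\bigl[m^+(\xi)+n^+(\xi^-)-n^-(\xi^-)\bigr]$. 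A direct expansion using the diagonal form of $E$ gives $\tfrac{1}{2}(\eta_E(\xi)-\s_E(\xi))=m^+(\xi)+n^+(\xi^-)-n^-(\xi^-)$, and the functions $\eta_E\pm\s_E$ attain their maxima at event positions $\xi$ (verified by comparing values at $\xi^-$, $\xi$, $\xi^+$), so $T^+=\tfrac{1}{2}\max_z(\eta_E(z)-\s_E(z))$. Symmetrically $T^-=\tfrac{1}{2}\max_z(\eta_E(z)+\s_E(z))$, and therefore $T^++T^-=\mu(E)$.

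The main obstacle is this last bookkeeping identification of the greedy count with $\mu(E)$, which requires the careful transition accounting above; the rest is local and follows from the merge criterion plus Theorem~\ref{th:glue}. Once the identification is in hand, the chains form the desired decomposition $\{1,\dots,M\}=\bigcup_{a=1}^{s}I_a$ with $s=\mu(E)$, and iterating Theorem~\ref{th:glue} inside each chain produces the required isometry $\l(E)\cong\l\bigl(\diag(\kappa_1\prod_{I_1}e_i,\dots,\kappa_s\prod_{I_s}e_i)\bigr)$ with suitable $\kappa_a\in\{-1,1\}$.
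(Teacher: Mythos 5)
Your proposal is essentially the same approach as the paper's proof: both process the entries $e_1,\dots,e_M$ in $\xi$-order (so that $\xi_1\le\dots\le\xi_M$ on $S^1_+$), both merge using Theorem~\ref{th:glue} exactly when the ``right sign'' of the candidate block at $\xi$ equals $\varepsilon_k$, and both track how the number of blocks compares to $\mu$. The paper phrases this as an induction on $M$, with the pool-nonempty/pool-empty dichotomy becoming its Case 1/Case 2, and verifies $\mu(E)=\mu(E')$ or $\mu(E')+1$ at each step by a $\Theta^\pm$ computation. You unroll the recursion and identify the final chain count in closed form with $\tfrac12\max(\eta_E+\s_E)+\tfrac12\max(\eta_E-\s_E)$. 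That closed-form accounting is a genuine (and arguably more transparent) alternative to the paper's inductive comparison, but the greedy itself, the merge criterion $R_{\text{last}}=L_k$, and the role of Theorem~\ref{th:glue} are identical.

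Two points you should tighten. First, your definition of $T^\pm$ is inconsistent with your own recurrence. You write ``$T^\pm$ for the total number of chains ever opened in $P_\pm$'' and say a new chain is ``placed in $P_{R_k}$'', which suggests $T^\pm$ counts new chains with $R_k=\pm$; but the greedy rule $T^+(\xi^+)-T^+(\xi^-)=\max(0,m^+(\xi)-a_+(\xi^-))$ and the bookkeeping identity $a_+(\xi^-)=T^+(\xi^-)+n^-(\xi^-)-n^+(\xi^-)$ only hold if $T^\pm$ instead counts chains opened \emph{because the lookup in $P_\pm$ failed}, i.e.\ new chains whose first entry $k$ has $L_k=\pm$. (For instance, with two entries $L=-,R=+$ at $\xi_1<\xi_2$, both chains are created during a failed lookup in $P_-$, so $T^-=2,T^+=0$, matching $\tfrac12\max(\eta_E+\s_E)=2$; under the other reading $T^+=2$, $T^-=0$, and the identification fails.) Second, the assertion ``distinct indices at a common $\xi$ go to distinct chains'' requires the batching at a fixed $\xi$ to be made precise: within a batch, all lookups must be performed against the pool state from $\xi^-$ before any returns to the pools occur; otherwise a chain just appended to at $\xi$ and moved to the other pool could be picked up by a second index at the same $\xi$, destroying the coprimality needed for Theorem~\ref{th:glue}. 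The paper sidesteps this by requiring $\rho_a(\xi)\ne0$ in its Case 1. Once these two items are fixed, the argument is sound.
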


We will prove the claim  by induction on the size $M$ of the elementary diagonal matrix. The case  $M=0$ is trivial.
So now suppose that the statement of the claim holds whenever the size of the elementary diagonal matrix is at most $M-1$.
Let $E=\diag(e_1,\dots,e_{M})$ be   an elementary diagonal matrix  such that  $\xi_k\in S^1\cap \Xi\subset S^1_+$ for $k=1,\dots,M$.
Without loss of generality we can assume  that $\xi_1\le\dots\le \xi_{M}$ on $S^1_+$.

We now write $E':=\diag(e_1,\dots,e_{M-1})$. We write $s:=\mu(E)$ and $s':=\mu(E')$. We then apply our induction hypothesis to $E'$.
We obtain  the corresponding decomposition
$\{1,\dots,M-1\}=I'_1\cup\dots \cup I'_{s'}$ and signs $\kappa_1',\dots,\kappa'_{s'}$.  For $a=1,\dots,s'$, let
\[\rho_a=\kappa'_a \prod_{i\in I'_{a}} e_i.\]
In the following we  write $\varepsilon=\varepsilon_M, n=n_M, e=e_M$ and $\xi_M=\xi$.

\emph{Case 1.} First suppose there exists an  $a\in \{1,\dots,s'\}$ such that  $\rho_a(\xi)\ne 0$ and such that $\sign(\rho_a(\xi))=\varepsilon$.
Note that
\[ \sign(B_{\xi}^{n}(z))=\sign(B_{\xi}^{n}(1))=\sign(\varepsilon)
\]  for any $z\in [1,\xi)\subset S^1_+$ since $B_{\xi}^{n}$ has no zeros on  $z\in [1,\xi)$.
Now recall that we assumed that  $\xi_1\le\dots\le \xi_{M}=\xi$ on $S^1_+$. It follows that $\rho_a$ has no zeros on $[\xi,-1]\subset S^1_+$.
It thus follows that
\[ \sign(\rho_a(z))=\sign(\rho_a(\xi))=\varepsilon \]
for any $z\in [\xi,-1]$.  We can thus apply Theorem~\ref{th:glue}
to conclude that
\be \label{equ:l} \l(\varepsilon \rho_a\cdot e)\cong  \l(\diag(\rho_a,e)).\ee
We will now prove the following claim.

\begin{claim}
$s=s'$.
\end{claim}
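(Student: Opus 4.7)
The plan is to establish the two inequalities $s\le s'$ and $s\ge s'$ separately.

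For the inequality $s\le s'$, I would read off a concrete size-$s'$ diagonal representative of $\l(E)$ from what we already have at this point in the argument. The induction hypothesis applied to $E'$ produces a congruence $\l(E')\cong \l(\diag(\rho_1,\dots,\rho_{s'}))$, so $\l(E)\cong \l(\diag(\rho_1,\dots,\rho_{s'})\oplus (e))$, and after permuting the summands (which preserves the Blanchfield form by Proposition \ref{prop:ra81}) we may apply the Case 1 identity \eqref{equ:l} to the pair $(\rho_a,e)$, obtaining
\[
\l(E)\cong \l\bigl(\diag(\rho_1,\dots,\widehat{\rho_a},\dots,\rho_{s'},\,\varepsilon\rho_a e)\bigr).
\]
This is a diagonal matrix of size $s'$, so by definition $n_\R(\l(E))\le s'$, and Lemma \ref{lem:easy} then forces $s=\mu(E)\le n_\R(\l(E))\le s'$.

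For the inequality $s\ge s'$, I would compare the functions $\Theta^{\pm}_E$ and $\Theta^{\pm}_{E'}$ pointwise on the arc $[1,\xi]\subset S^1_+$. Since $\Theta^{\pm}_E(z)=\Theta^{\pm}_{E'}(z)+\Theta^{\pm}_e(z)$, the goal is to show $\Theta^{\pm}_e(z)\ge 0$ on $[1,\xi]$ and then locate a maximizer of $\Theta^{\pm}_{E'}$ inside this arc. For $z\in[1,\xi)$ one has $B_\xi(z)>0$, hence $\sign(e(z))=\varepsilon=\sign(e(1))$, so $\sigma_e(z)=0$ and $\eta_e(z)=0$, giving $\Theta^{\pm}_e(z)=0$. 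At $z=\xi$ the polynomial $e$ vanishes, and a direct computation yields $\Theta^+_e(\xi)=1-\varepsilon$ and $\Theta^-_e(\xi)=1+\varepsilon$, both of which are nonnegative because $\varepsilon\in\{-1,1\}$. Thus $\Theta^{\pm}_E\ge \Theta^{\pm}_{E'}$ on all of $[1,\xi]$.

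To conclude, I would invoke Proposition \ref{prop:maxoncircle} applied to $\l(E')$: the maxima $M^{\pm}(E')$ are attained at zeros of $\det(E')$ on $S^1_+$, all of which lie in $\{\xi_1,\dots,\xi_{M-1}\}\subseteq[1,\xi_{M-1}]\subseteq[1,\xi]$ by the normalization $\xi_1\le\dots\le\xi_M=\xi$. Picking $z_{\pm}\in[1,\xi]$ at which $\Theta^{\pm}_{E'}$ attain their maxima, we get $M^{\pm}(E)\ge \Theta^{\pm}_E(z_{\pm})\ge \Theta^{\pm}_{E'}(z_{\pm})=M^{\pm}(E')$, so $\mu(E)\ge\mu(E')=s'$. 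Combined with the previous paragraph, this gives $s=\mu(E)=s'$.

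The main obstacle I anticipate is a bookkeeping one: one must carefully justify that \emph{both} maximizers $z_+,z_-$ can be chosen within $[1,\xi]$ (which is exactly what Proposition \ref{prop:maxoncircle} delivers) and verify $\Theta^{\pm}_e\ge 0$ there with attention to the signs of $\varepsilon$ and $B_\xi$. Notably, the Case 1 hypothesis $\sign(\rho_a(\xi))=\varepsilon$ is only used to produce the size-$s'$ representative in the first inequality; the direction $\mu(E)\ge \mu(E')$ relies only on $\xi_M$ being the largest of the $\xi_k$'s in $S^1_+$, which is a structural feature of the inductive setup.
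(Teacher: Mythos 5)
Your proof is correct, and both inequalities run along the same broad lines as the paper: for $s\le s'$ you use the identity \eqref{equ:l} to produce a diagonal representative of size $s'$ and then invoke Lemma~\ref{lem:easy}, exactly as the paper does. For $s\ge s'$, however, you take a cleaner route. The paper inserts a pointwise shift $\pm\varepsilon$ into the two maxands for $\mu(E')$ (so that their sum is unchanged), and then asserts $\Theta^\pm_e(z)\mp\varepsilon\ge 0$ on the arc $[1,\xi]$; but for $z\in[1,\xi)$ one has $\Theta^\pm_e(z)=0$, so one of $\Theta^+_e(z)-\varepsilon$, $\Theta^-_e(z)+\varepsilon$ equals $-1$, and that pointwise claim fails. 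Your argument sidesteps this entirely: you only need the (true) pointwise estimate $\Theta^\pm_e\ge 0$ on $[1,\xi]$, combined with Proposition~\ref{prop:maxoncircle} to localize the maximizers of $\Theta^\pm_{E'}$ inside the same arc, which gives $\max_{S^1_+}\Theta^\pm_E\ge\max_{S^1_+}\Theta^\pm_{E'}$ directly. Your closing remark is also apt: the Case~1 hypothesis is only needed for the $s\le s'$ half, while $\mu(E)\ge\mu(E')$ holds just from $\xi_M$ being maximal among the $\xi_k$ — which is in fact silently used again in Case~2 of the paper's proof.
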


Note that (\ref{equ:l}) implies that $\l(E)$ can be represented by an $s'\times s'$-matrix, in particular it follows that $s\leq s'$.
We will now show that $s\geq s'$.
Given a hermitian matrix $C$ over $\ct$ and $z\in S^1$ we  write
\[ \Theta_C^{\pm}(z):=\eta_C(z)\pm \s_C(z).\]
By  Proposition \ref{prop:maxoncircle} we have
\[ \ba{rcl}\mu(E')&=&
\frac{1}{2}\left(\max\{ \Theta^+_{E'}(z) \, |\, z\in S^1_+\} + \max\{ \Theta^-_{E'}(z) \, |\, z\in S^1_+\}\right)\\[2mm]
&=&
\frac{1}{2}\left(\max\{ \Theta^+_{E'}(z) \, |\, z\in [1,\xi]\} + \max\{ \Theta^-_{E'}(z) \, |\, z\in [1,\xi]\}\right) \\[2mm]
&=&
\frac{1}{2}\left(\max\{ \Theta^+_{E'}(z)+\varepsilon \, |\, z\in [1,\xi]\} + \max\{ \Theta^-_{E'}(z)-\varepsilon \, |\, z\in [1,\xi]\}\right).\ea \]
Note that $\Theta^\pm_{E}(z)=\Theta^\pm_{E'}(z)+\Theta^\pm_{e}(z)$.
It is straightforward to verify that $ \Theta^{\pm}_{e}(z)\mp \varepsilon$
is greater or equal than zero for any $z\in [0,\xi]$. We thus conclude that
\[ \Theta_{E'}^\pm \pm \varepsilon=\Theta_{E}^\pm-(\Theta_{\varepsilon}^\pm \mp \varepsilon )\leq \Theta_E^\pm \]
on $S^1_+$.
It follows that
\[ \ba{rcl}\mu(E')&\leq &
\frac{1}{2}\left(\max\{ \Theta^+_{E}(z) \, |\, z\in [1,\xi]\} + \max\{ \Theta^-_{E}(z) \, |\, z\in [1,\xi]\}\right) \\[2mm]
&\leq &\frac{1}{2}\left(\max\{ \Theta^+_{E}(z) \, |\, z\in S^1_+\} + \max\{ \Theta^-_{E}(z) \, |\, z\in S^1_+\} \right) \\
&=&\mu(E).\ea \] This concludes the proof that $s=s'$.
We now define $I_{a}=I'_{a}\cup\{M\}$ and  $I_b=I_b'$ for $b\neq a$ and the induction step is proved for Case~1.

\emph{Case 2.} Now suppose that for any  $a\in \{1,\dots,s'\}$ we either have  $\rho_a(\xi)= 0$ or  $\sign(\rho_a(\xi))=-\varepsilon$.
We claim that $s=s'+1$.
We write $R:=\diag(\rho_1,\dots,\rho_{s'},e)$.
We can thus represent $E$ by the matrix $R$  of size $s'+1$. It follows that $s\leq s'+1$.
We now write $k:=\# \mbox\{ a \in \{1,\dots,s'\} \, |\, \rho_a(\xi)=0\}$.
We have
\[ \ba{rcl}\mu(E')&=&\mu(R) \\
&\geq & \frac{1}{2}\max\{ \eta_R(z)+\varepsilon \s_R(z) \, |\, z\in S^1\} \\[2mm]
&\geq & \frac{1}{2}\left( \eta_R(\xi)+\varepsilon\s_R(\xi)\right) \\[2mm]
&=&(k+1)+(s'-k) =s'+1.\ea \]
We now take $I_a:=I'_a$ for $a\in \{1,\dots,s'\}$ and we define $I_{s'+1}=\{M\}$.
\end{proof}

%

We now consider the next special case of Theorem \ref{main}.

\begin{proposition}\label{prop:notoncircle}
Let $B$ be a hermitian matrix over $\L$ such that $\det(B)\in \L$ has no zero on the unit circle.
Then $n_\R(B)=\eta(B)$.
\end{proposition}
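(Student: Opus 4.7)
The plan is the following. Lemma~\ref{lem:easy} already gives $n_\R(\l(B))\geq\eta(B)$, so I only need to produce a hermitian representative of $\l(B)$ of size exactly $\eta(B)$. The strategy is to diagonalize and then repeatedly merge the diagonal entries via Theorem~\ref{thm:glue}.

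First I would apply Lemma~\ref{lem:splitting} to replace $B$ by an isometric elementary diagonal form $E=\diag(e_1,\dots,e_M)$ with $e_k=\varepsilon_k B_{\xi_k}^{n_k}$. Since $\l(E)\cong\l(B)$ forces $\det(E)$ and $\det(B)$ to differ by a unit (Proposition~\ref{prop:ra81}), the hypothesis that $\det(B)$ has no zero on $S^1$ translates to $|\xi_k|<1$ for every $k$. In this regime $B_{\xi_k}(z)>0$ for all $z\in S^1$, so each $e_k$ takes the constant sign $\varepsilon_k$ on the entire unit circle. A direct computation of nullities shows that for $z\in\C\sm\{0\}$ either $\eta_E(z)=0$, or $\eta_E(z)=\#\{k\co\xi_k=\xi\}$ where $\xi\in\Xi$ is the unique element with $B_\xi(z)=0$. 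Setting $m_\xi:=\#\{k\co\xi_k=\xi\}$ and $N:=\max_\xi m_\xi$, one gets $\eta(E)=N$.

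The second step is to distribute the indices into $N$ slots. Since $m_\xi\leq N$ for every $\xi$, I can choose a partition $\{1,\dots,M\}=S_1\sqcup\cdots\sqcup S_N$ such that within each $S_i$ the numbers $\xi_k$ are pairwise distinct, so that the corresponding polynomials $e_k$ are pairwise coprime. Within a fixed slot $S_i$ I would then combine the entries one pair at a time via Theorem~\ref{thm:glue}: at each stage the current combined element $c$ is palindromic and of constant sign on $S^1$, so choosing $\varepsilon\in\{-1,1\}$ to be that sign trivially satisfies the positivity hypothesis and permits replacing $\diag(c,e_k)$ by the single entry $\varepsilon c e_k$. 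The new entry is again palindromic with constant sign on $S^1$, and remains coprime to the as-yet-unmerged $e_l$ in $S_i$ (all of which have still other $\xi_l$'s), so the process iterates until $S_i$ has collapsed to a single palindromic polynomial $f_i$. Assembling these across the $N$ slots yields a diagonal representative of $\l(E)=\l(B)$ of size $N$, proving $n_\R(\l(B))\leq N=\eta(B)$ as required.

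There is no real obstacle once the key observation is isolated, namely that the constraint $|\xi_k|<1$ forces each $B_{\xi_k}$ to be strictly positive on $S^1$, which automatically arranges the positivity hypothesis of Theorem~\ref{thm:glue}. After that the argument is essentially the elementary combinatorial assertion that $M$ symbols with multiplicities $m_\xi$ can be packed into $\max_\xi m_\xi$ bins.
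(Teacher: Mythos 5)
Your proposal is correct and follows essentially the same line as the paper: reduce via Lemma~\ref{lem:splitting} to an elementary diagonal matrix whose entries all have zeros off $S^1$, observe that $\eta$ equals the maximal multiplicity of any $\xi$, pack the diagonal entries into $\eta(B)$ bins of pairwise coprime polynomials, and merge each bin by repeated application of Theorem~\ref{thm:glue}, which applies trivially since each factor has constant sign on $S^1$. You spell out the nullity computation and the coprimality bookkeeping a bit more explicitly than the paper does, but the argument is the same.
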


Note that if $\det(B)\in \L$ has no zero on the unit circle, then
$\eta_B$ and $\sigma_B$ are constant functions on the unit circle, hence $\mu(B)=0$.

\begin{proof}
As in the proof of Proposition \ref{prop:oncircle} we only have to consider the case that $B$ is
an elementary diagonal matrix $B=\diag(e_1,\dots,e_M)$. Note that the zeros of $e_1,\dots,e_M$ do not lie on $S^1$.
 We write $s=\eta(B)$.
 Also, as we pointed out before, in light of Lemma \ref{lem:easy} it suffices to prove that  $s=\eta(B)\geq \nr(B)$.
 
 It is straightforward to see that one can decompose $\{1,\dots,M\}$ into subsets $I_1,\dots,I_{s}$ with the following property:
given $k,l\in I_b$ with  $k\neq l$ the polynomials $e_k$ and $e_l$ have different roots.
It is clear that one can find such  $I_1,\dots,I_{s}$, since for any
$\xi\not\in S^1$ there exist at most $s$ indices $k\in \{1,\dots,M\}$ for which $e_k$ has root at $\xi$.

Since the sign of any product of product of the $e_i$ is constant on the unit circle we can now apply Theorem \ref{thm:glue} repeatedly to show that there exist $\eps_b\in \{-1,1\}$ such that
\[  \l(B)\cong\lambda\big(\diag\big(\eps_1 \prod_{j\in I_1} e_j, \, \dots,\,  \eps_s \prod_{j\in I_{s}} e_j\big)\big).\]
We thus showed that $\nr(B)\leq s=\eta(B)$. 
\end{proof}

We are now ready to finally provide a proof of Theorem~\ref{th:realequal}.

\begin{proof}[Proof of Theorem~\ref{th:realequal}]
Let $B$ be a square matrix over $\L$ such that $B(1)$ and $B(-1)$ are non-degenerate.
We write $s_i:=s_i(B)$.
It follows from Lemma \ref{lem:splitting} together with the proofs of Propositions \ref{prop:oncircle} and \ref{prop:notoncircle}
that there exist palindromic
$f_1,\dots,f_{\mu}\in \L$ with no zeros outside of $S^1$ and palindromic $g_1,\dots,g_{\eta}\in \L$ with no zeros on $S^1$ such that
\[ \l(B)\cong \l(\diag(f_1,\dots,f_{\mu},g_1,\dots,g_{\eta})).\]
Note that the  sign of any $g_i$ is constant on the unit circle.
It follows from Theorem \ref{thm:glue} that for any $k\in \{1,\dots,\min(\mu,\eta)\}$ we have $\l(\diag(f_k,g_k))\cong \l((\varepsilon_kf_kg_k))$ for some  $\varepsilon_k\in \{\pm 1\}$. This shows, that if $\mu\ge \eta$,
\[\l(B)\cong\l(\diag(\varepsilon_1f_1g_1,\dots,\varepsilon_{\eta}f_{\eta}g_{\eta},f_{\eta+1},\dots f_{\mu})),\]
while, if $\eta>\mu$
\[\l(B)\cong\l(\diag(\varepsilon_1f_1g_1,\dots,\varepsilon_{\mu}f_{\mu}g_{\mu},g_{\mu+1},\dots g_{\eta})).\]
We thus showed that  $\max\{\mu(B),\eta(B)\}\geq \nr(B)$.
Together with  Lemma \ref{lem:easy} we now obtain the desired equality  $\max\{\mu(B),\eta(B)\}= \nr(B)$.
\end{proof} 

We point out that the proof of Theorem \ref{th:realequal} in fact provides a proof of the following slightly more precise statement.

\begin{theorem}
Let $B$ be a hermitian  matrix over $\L$ such that  $\det(B(\pm 1))\ne 0$. Let $s=\max\{\mu(B),\eta(B)\}$.
Then there exists a diagonal hermitian $s\times s$-matrix $D$ over $\L$ such that $\l(D)\cong \l(B)$.
\end{theorem}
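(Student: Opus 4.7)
The plan is to simply reread the proof of Theorem~\ref{th:realequal} and observe that every congruence produced in it is in fact between diagonal matrices, so the $s \times s$ representative constructed at the end is automatically diagonal. Concretely, I would proceed in exactly three stages, and in each stage I would check that diagonality is maintained.

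First, by Lemma~\ref{lem:splitting} I would replace $B$ by an elementary diagonal matrix $E$ with $\l(E)\cong \l(B)$. This step is already diagonal by construction, and each entry of $E$ is a signed power $\varepsilon_k B_{\xi_k}^{n_k}$ of an elementary palindromic polynomial. Next I would separate the indices $k$ according to whether $|\xi_k|=1$ or $|\xi_k|<1$ and apply Proposition~\ref{prop:oncircle} to the first group and Proposition~\ref{prop:notoncircle} to the second. Both propositions are proved by repeatedly combining pairs of diagonal entries via Theorem~\ref{thm:glue}, which takes $\diag(A,B)$ to the $1\times 1$ matrix $(\varepsilon AB)$ whenever the sign hypothesis is met; consequently the output of each proposition is again a diagonal matrix. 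Concatenating the two outputs I obtain a diagonal matrix
\[ D_0 = \diag(f_1,\dots,f_{\mu(B)},\, g_1,\dots,g_{\eta(B)}) \]
with $\l(D_0)\cong \l(B)$, where the $f_i$ are palindromic with all zeros on $S^1\setminus\{\pm 1\}$ and the $g_j$ are palindromic with no zeros on $S^1$.

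Finally, for each $k \leq \min(\mu(B),\eta(B))$ the polynomial $g_k$ has constant, non-zero sign on the whole of $S^1_+$, so Theorem~\ref{thm:glue} applies to the $2\times 2$ diagonal block $\diag(f_k,g_k)$ and collapses it to a single entry $\varepsilon_k f_k g_k$ with $\varepsilon_k\in\{\pm 1\}$. Applying this collapse to every such pair, and leaving the unpaired $|\mu(B)-\eta(B)|$ entries on the diagonal unchanged, produces a diagonal hermitian matrix $D$ of size exactly $\max\{\mu(B),\eta(B)\} = s$ with $\l(D)\cong \l(B)$.

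The main point to verify is thus not a new obstacle but a bookkeeping one: that the applications of Theorem~\ref{thm:glue} inside the proofs of Propositions~\ref{prop:oncircle} and \ref{prop:notoncircle}, as well as in the pairing stage, truly act on diagonal blocks rather than on arbitrary $2\times 2$ sub-blocks of a larger matrix. Since each call to Theorem~\ref{thm:glue} takes two specified diagonal entries and replaces them with one, and since direct sum (i.e. diagonal block sum) of diagonal matrices is diagonal, this is automatic; no separate argument is required beyond retracing the earlier proof with this observation in mind.
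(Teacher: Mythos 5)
Your proposal is correct and matches the paper's own (terse) treatment: the paper simply remarks that the proof of Theorem~\ref{th:realequal} already produces a diagonal matrix of size $s$, and you have spelled out why each stage (Lemma~\ref{lem:splitting}, Propositions~\ref{prop:oncircle} and~\ref{prop:notoncircle}, and the final pairing via Theorem~\ref{thm:glue}) preserves diagonality.
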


\section{Examples}
We will first summarize a few properties of the invariant $\eta$ before proceeding with various examples.

\subsection{Basic properties of $\eta$}

We have the following result.
\begin{lemma}
For any knot $K$, the following numbers are equal.
\begin{itemize}
\item[(a)] The maximum of nullities $\eta(K)$.
\item[(b)] The real Nakanishi index, i.e. the minimal number of generators of the $\realt$ module $H_1(X(K);\realt)$.
\item[(c)] The rational Nakanishi index, i.e. the minimal number of generators of the $\qt$ module $H_1(X(K),\qt)$.
\item[(d)] The maximal index $k$, for which the $k-$th Alexander polynomial $\Delta_k$ is not $1$
\item[(e)] The bigger of the two following numbers
\[\max_{\l\colon 0<|\l|<1}\sum_{k=1}^\infty q^k_\l\textrm{ and }\max_{|\l|=1}\sum_{k=1}^\infty\sum_{u=\pm 1}p^k_\l(u),\]
where the numbers $p^k_\lambda(u)$ and $q^k_\lambda$ are
the Hodge numbers defined in \cite{BN13}.
\end{itemize}
\end{lemma}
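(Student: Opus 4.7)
The plan is to establish the chain of equalities (b)~=~(c), (c)~=~(d), (a)~=~(b) and (b)~=~(e); all four reductions are routed through the invariant factor decomposition of the Alexander module over an appropriate PID. For (b)~=~(c), the Alexander module $H_1(X(K);\qt)$ is a finitely generated torsion module over the PID $\qt$, hence decomposes as $\bigoplus_{i=1}^m \qt/(a_i)$ with non-unit $a_1\mid\cdots\mid a_m$. These divisibilities persist after tensoring with $\realt$, so the number $m$ of non-unit invariants, which equals the minimal number of generators over either PID, is preserved.

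For (c)~=~(d), with the convention that $\Delta_1=\Delta_K$ is the ordinary Alexander polynomial, the $k$-th Alexander polynomial is a generator of the $(k-1)$-st Fitting ideal of the Alexander module. A direct computation with the Smith normal form yields $\Delta_k=a_1 a_2\cdots a_{m-k+1}$ for $1\le k\le m$ and $\Delta_k=1$ for $k>m$, so the largest $k$ with $\Delta_k\ne 1$ is $m$, the rational Nakanishi index.

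For (a)~=~(b), I would fix a Seifert matrix $V$ of genus $g$ for $K$. Then $M(t)=tV-V^t$ is a square presentation matrix of the Alexander module over $R\tpm$ for any $R\subset\R$. A direct calculation gives
\[
(1-z)V+(1-z^{-1})V^t=\frac{1-z}{z}(zV-V^t),
\]
so $\eta_K(z)=\null(M(z))$ for $z\ne 0,1$; the equality extends to $z=1$ since $M(1)=V-V^t$ is unimodular. Passing to $\ct$ and putting $M(t)$ into Smith normal form $\diag(d_1,\ldots,d_{2g})$ with $d_1\mid\cdots\mid d_{2g}$, one has $\null(M(z))=\#\{i:d_i(z)=0\}$ for each $z\in\C\setminus\{0\}$. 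The divisibility $d_i\mid d_{i+1}$ forces the supremum over $z$ of this count to equal the number of non-unit $d_i$'s, i.e.\ the Nakanishi index over $\ct$, which by the same base-change observation as in (b)~=~(c) agrees with the $\realt$-Nakanishi index.

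Finally, for (b)~=~(e), I would invoke the classification of isometric structures over $\R$ recalled in the remark following Theorem~\ref{th:glue}. Forgetting the Blanchfield datum, the Alexander module decomposes over $\realt$ into cyclic summands $\realt/(B_\lambda^k)$, where $B_\lambda$ is the elementary palindromic polynomial with root $\lambda$; the classification counts these summands as $q^k_\lambda$ of size $k$ when $0<|\lambda|<1$ and $\sum_{u=\pm 1}p^k_\lambda(u)$ of size $k$ when $|\lambda|=1$. Over the PID $\realt$, the minimal generator count of such a direct sum equals the maximum, over palindromic primes $p=B_\lambda$, of the total number of summands with radical $p$, which is $\sum_k q^k_\lambda$ or $\sum_k\sum_{u=\pm1}p^k_\lambda(u)$ respectively; taking the maximum over all palindromic primes reproduces the quantity in (e). The most delicate step in the whole argument is (a)~=~(b): the identification of $\eta_K(z)$ with the number of invariant factors of $M(t)$ vanishing at $z$ is the engine, after which the divisibility structure of Smith normal form and the routine base-change reductions make the rest of the chain a matter of dictionary translation.
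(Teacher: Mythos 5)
The paper's proof of this lemma is essentially a pointer: it declares the equality of (a)--(d) ``well known,'' derives (a)$=$(e) and (c)$=$(d)$=$(e) by citing \cite{BN13}, and notes the easy inequalities (b)$\le$(c) and (d)$\le$(b). Your proposal instead assembles a nearly self-contained argument through the invariant factor decomposition, which is a genuinely different (and more instructive) route. Your treatment of (b)$=$(c), (c)$=$(d), and (a)$=$(b) is correct: the base-change observation that non-unit invariant factors over $\qt$ stay non-unit over $\realt$ (and over $\ct$) is the right mechanism for (b)$=$(c); the Fitting-ideal computation $\Delta_k=a_1\cdots a_{m-k+1}$ for a diagonal presentation in invariant-factor form gives (c)$=$(d); and the identity $(1-z)V+(1-z^{-1})V^t=\tfrac{1-z}{z}(zV-V^t)$ together with the divisibility chain in Smith normal form over $\ct$ correctly forces $\max_z\null(M(z))$ to equal the number of non-unit invariant factors, which you then pull back to the $\realt$-Nakanishi index. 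You even handle $z=1$ via unimodularity of $V-V^t$. These three steps replace ``well known to the experts'' with actual arguments, which is a strength.

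The one soft spot is (b)$=$(e). You assert that the Hodge numbers $q^k_\lambda$ and $p^k_\lambda(u)$ of \cite{BN13} count cyclic summands $\realt/(B_\lambda^k)$, but nothing in the present paper spells out these definitions, so this step is a citation of the same external result the authors cite --- just phrased structurally rather than as a numbered lemma. In particular you should be careful that the quantity in (e) takes a \emph{maximum over $\lambda$}, so you need to argue (as you sketch) that the minimal number of generators of a torsion module over a PID equals the maximum over primes of the length of the $p$-primary filtration, i.e., the maximum over $p$ of $\#\{$cyclic summands with radical $p\}$; this is the engine, but it implicitly re-uses the invariant-factor uniqueness from (b)$=$(c). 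Also note that for $|\lambda|<1$ with $\lambda\notin\R$ the relevant primes of $\realt$ are $(t-\lambda)(t-\bar\lambda)$ and its inverse conjugate, and $B_\lambda$ is their product; your appeal to $\realt/(B_\lambda^k)$ tacitly uses the CRT isomorphism pairing these two primary pieces, which is fine but worth saying, since it is forced by the Blanchfield duality rather than being automatic for an arbitrary module. With that caveat flagged, the overall structure is sound and gives more content than the paper's own proof.
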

\begin{proof}
The fact that (a), (b), (c) and (d) are equal is well known to the experts. For the convenience of the reader we point
out that (a)=(e) follows from \cite[Lemma 4.4.6]{BN13}, (c)=(d)=(e) is \cite[Proposition 4.3.4]{BN13}. It is obvious that (b)$\le$(c)
and (d)$\le$(b).
\end{proof}

We now recall, that give a knot $K$ we denote by $-K$  the knot which is given by reversing the orientation and taking the mirror image.
It is well-known that $\eta(-K)=\eta(K)$ and $\mu(-K)=-\mu(K)$. 
Also note that the invariant $\mu$ is additive under connect sum, in particular we see that $\mu(K\# -K)=0$ for any knot $K$.

From the lemma it follows that for a knot $K$ and signs $\eps_1,\dots,\eps_n\in \{-1,1\}$
we have
\[\eta(\eps_1 K\#\dots\# \eps_n K)=n\cdot\eta(K).\]
On the other hand, given two knots $K_1$ and $K_2$ we have
\[\eta(K_1\# K_2)\in\{\max(\eta(K_1),\eta(K_2)),\dots,\eta(K_1)+\eta(K_2)\}.\]
Finally, if the Alexander polynomials of $K_1$ and $K_2$ are coprime, then $\eta(K_1\# K_2)=\max(\eta(K_1),\eta(K_2))$.

\subsection{Some concrete examples} 

We first consider the knot $K=12a_{896}$.
Its Alexander polynomial is
\[ \Delta_K(t)=2-11t+26t^2-40t^3+45t^4-40t^5+26t^6-11t^7+2t^8.\]
The Alexander polynomial has no multiple roots, we therefore see that $\eta(K)=1$.

The graph of the function $x\to \s(e^{2\pi ix})$
is presented on Figure~\ref{fig:12a896}.
The maximum of the Levine-Tristram signature is $2$, the minimum is $-2$. All the jumps of the Levine--Tristram signatures
correspond to single roots of the Alexander polynomial. We thus see that  $\mu(K)=2$.
Note that $\mu(K)=2$  is bigger than half the maximum of
the absolute value of the Levine--Tristram signature function.

\begin{figure}
\begin{pspicture}(-5,-2)(5,2)
\psline{->}(-4,0)(4,0)
\psline{->}(-3,-1.5)(-3,1.5)
\psline(-2.9,0.1)(-3.1,-0.1)\rput(-3.3,-0.2){\psscalebox{0.8}{$0$}}
\psline(-3.2,0.6)(-2.8,0.6)\rput(-3.4,0.6){\psscalebox{0.8}{$1$}}
\psline(-3.2,1.2)(-2.8,1.2)\rput(-3.4,1.2){\psscalebox{0.8}{$2$}}
\psline(-3.2,-0.6)(-2.8,-0.6)\rput(-3.5,-0.6){\psscalebox{0.8}{$-1$}}
\psline(-3.2,-1.2)(-2.8,-1.2)\rput(-3.5,-1.2){\psscalebox{0.8}{$-2$}}
\psline[linecolor=blue,linewidth=1.2pt](-3,0)(-2.35,0)(-2.35,1.2)(-2.15,1.2)(-2.15,0)(-1.4,0)(-1.4,-1.2)(1.4,-1.2)(1.4,0)(2.15,0)(2.15,1.2)(2.35,1.2)(2.35,0)(3,0)
\pscircle[fillcolor=red,fillstyle=solid,linestyle=none](-2.35,0.6){0.08}
\pscircle[fillcolor=red,fillstyle=solid,linestyle=none](-2.15,0.6){0.08}
\pscircle[fillcolor=red,fillstyle=solid,linestyle=none](-1.4,-0.6){0.08}
\pscircle[fillcolor=red,fillstyle=solid,linestyle=none](2.35,0.6){0.08}
\pscircle[fillcolor=red,fillstyle=solid,linestyle=none](2.15,0.6){0.08}
\pscircle[fillcolor=red,fillstyle=solid,linestyle=none](1.4,-0.6){0.08}
\psline(3,-0.1)(3,0.1)\rput(3,-0.3){\psscalebox{0.8}{$1$}}
\psline(0,-0.1)(0,0.1)\rput(0,-0.3){\psscalebox{0.8}{$1/2$}}
\end{pspicture}
\caption{Graph of the signature function of the knot $12a_{896}$, more precisely the function $x\to\s(e^{2\pi ix})$.
The jumps of the signature function occur at the places, corresponding to roots of the Alexander polynomial $(2-3t+2t^2)(1-4t+6t^2-7t^3+6t^4-4t^5+1)$ on
the unit circle. Numerically they are $x\sim 0.115$, $x\sim 0.12149$, $x\sim 0.2697$, $x\sim 0.7302$, $x\sim 0.8785$, $x\sim 0.8850$. The graph is taken
from \cite{CL11}.}\label{fig:12a896}.
\end{figure}
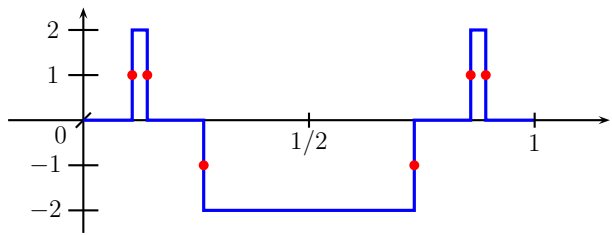

We refer to the authors' webpage \cite{BF12web} for more information on unknotting numbers for knots
 with up to 12 crossings.
\medskip

We list now some examples, which are built from connected sums of different knots.

\begin{enumerate}
\item For any knot $K$ with non-trivial Alexander polynomial and $\eta=1$ (for example we could take $K$ to be the trefoil), the knot $K'=K\#-K$ has $\mu=0$ and $\eta=2$. The connected
sum of $n$ copies of $K'$ has $\mu=0$ but $\eta=n$ can be arbitrarily large.
\item The torus knots $T_{2,2k+1}$ have signature $2k$, the span of signatures is $\mu(T_{2,2k+1})=k$ but $\eta=1$. This example and the example above
show that $\mu$ and $\eta$ are, in general, completely independent.
\item For any torus knot $T_{2,2k+1}$ we saw in (2) that  $n_\R=k$, but for $T_{2,2k+1}\#-T_{2,2k+1}$ we have $\mu=0$, $\eta=2$, so $n_\R=2$.
 The Blanchfield Form dimension $n_\R$ is therefore not additive. Note that this is in contrast to the conjecture (see \cite[Problem 1.69(B)]{Ki97}) that the unknotting number is additive under connect sum.
\item The knots $6_2$ and $10_{32}$ have both $n_\R=1$ (see \cite{CL11} for graphs of their signature functions). But their sum
$6_2\#10_{32}$ also has $n_\R=1$. 
Therefore, $n_\R(K_1\#K_2)$ can be equal to $1$ even if
$n_\R(K_1)=n_\R(K_2)=1$.
\end{enumerate}

Finally note that in \cite[Theorem 18]{Li11} Livingston uses the Levine--Tristram signature function to
define a new invariant $\rho(K)$ which gives a lower bound on the $4$-genus and in particular on the unknotting number.
Livingston furthermore shows that $\rho(-5_1\#10_{132})=3$, whereas $n_\R(K)=2$.
This shows that the Blanchfield Form dimension $n_\R(K)$ is not the optimal unknotting
information, which can be obtained from Levine--Tristram signatures and nullities.

On the other hand there are many examples for which $\rho(K)=0$, e.g. for all knots with vanishing Levine-Tristram signature function, but for which $\eta(K)>0$.
This shows that $\rho(K)$ and $n_\R(K)$ are independent lower bounds on the unknotting number.

We conclude this paper with the following question:

\begin{question}
What is the optimal lower bound on the unknotting number that can be obtained using  Levine--Tristram signatures and nullities?
\end{question}

\end{document}